\colorlet{crvena}{black!80}
\colorlet{plava}{black!50}
\colorlet{zelena}{black!20}
\def\rank{\mathrm{rank}}
\def\arctan{\mathrm{arc\,tan\,}}
\def\dist{\mathrm{dist}}
\def\diag{\mathrm{diag}}
\def\sign{\mathrm{sign\,}}
\numberwithin{equation}{section}
\newtheorem{theorem}{Theorem}[section]
\newtheorem{proposition}[theorem]{Proposition}
\newtheorem{corollary}[theorem]{Corollary}
\newtheorem{lemma}[theorem]{Lemma}
\newtheorem{remark}[theorem]{Remark}
\newtheorem{example}[theorem]{Example}
\newtheorem{definition}[theorem]{Definition}
\date{}
\begin{document}

\hyphenation{Abe-li-an}

\hyphenation{boun-da-ry}

\hyphenation{Cha-o-tic cur-ves}

\hyphenation{Dy-na-mi-cal dy-na-mics}

\hyphenation{El-lip-ti-cal en-coun-ter}

\hyphenation{Lo-ba-chev-sky}

\hyphenation{Mar-den Min-kow-ski}

\hyphenation{pa-ra-met-ri-zes pa-ra-met-ri-za-tion
Pon-ce-let-Dar-boux}

\hyphenation{quad-ric quad-rics qua-dri-ques}

\hyphenation{sin-gu-la-ri-ties spa-ces}

\hyphenation{tra-jec-to-ry trans-ver-sal}

\author{Vladimir Dragovi\'c}
\address{Mathematical Institute SANU, Kneza Mihaila 36, Belgrade,
Serbia\newline\indent Mathematical Physics Group, University of
Lisbon, Portugal} \email{vladad@mi.sanu.ac.rs}

\author{Milena Radnovi\'c}
\address{Mathematical Institute SANU, Kneza Mihaila 36, Belgrade, Serbia}
\email{milena@mi.sanu.ac.rs}

\title[Ellipsoidal billiards in pseudo-Euclidean spaces]
{Ellipsoidal billiards in pseudo-Euclidean spaces and relativistic quadrics}

\thanks{The research which led to this paper was partially
supported by the Serbian Ministry of Education and Science (Project
no.~174020: \emph{Geometry and Topology of Manifolds and Integrable
Dynamical Systems}) and by Mathematical Physics Group of the
University of Lisbon (Project \emph{Probabilistic approach to finite
and infinite dimensional dynamical systems, PTDC/MAT/104173/2008}).
M.~R.\ is grateful to the Weizmann Institute of Science (Rehovot, Israel) and \emph{The Abdus Salam} ICTP (Trieste, Italy) for their hospitality and support in various stages of work on this paper.
\newline
The authors are grateful to the referee for his useful comments which led us to a significant improvement of the manuscript.
}

\keywords{Confocal quadrics, Poncelet theorem, periodic billiard trajectories, Minkowski space, light-like billiard trajectories, tropic curves}

\begin{abstract}
We study geometry of confocal quadrics in pseudo-Euclidean spaces
of an arbitrary dimension $d$ and any signature, and related billiard
dynamics. The goal is to give a complete description of periodic billiard
trajectories within ellipsoids. The novelty of our approach is based on
introduction of a new discrete combinatorial-geometric structure
associated to a confocal pencil of quadrics, a colouring in $d$ colours, by
which we decompose quadrics of $d+1$ geometric types of a pencil into new
relativistic quadrics of $d$ relativistic types.
Deep insight of related geometry and combinatorics comes from our study
of what we call discriminat sets of tropical lines $\Sigma^+$ and
$\Sigma^-$ and their singularities.
All of that enables us to get an analytic criterion describing all
periodic billiard trajectories, including the light-like ones as those
of a special interest.
\end{abstract}

\maketitle

\tableofcontents

\setlength{\parskip}{2pt}

\section{Introduction}
\label{sec:intro}
Pseudo-Euclidean spaces together with pseudo-Riemannian manifolds
occupy a very important position in the science as a geometric
background for the general relativity.
A modern account of the
mathematical aspects of the theory of relativity one may find in
\cite{CGP2010}.
From a mathematical point of view, in comparison with
Euclidean and Riemannian cases, apart from a natural similarity
which includes some rather technical adjustments, there are some
aspects where pseudo-Euclidean setting creates essentially new
situations and challenging problems.
The aim of this paper is to
report about such appearances in a study of geometry of confocal
quadrics and related billiard dynamics in pseudo-Euclidean spaces.
Let us recall that in the Euclidean $d$-dimensional space, a
general family of confocal quadrics contains exactly $d$ geometrical
types of non-degenerate quadrics, and moreover, each point is the
intersection of $d$ quadrics of different types.
Together with some
other properties, see E1--E5 at the beginning of Section
\ref{sec:relativistic}, these facts are crucial for introduction of
Jacobi coordinates and for applications in the theory of separable
systems, including billiards.
A case of $d$-dimensional
pseudo-Euclidean space brings a striking difference, since a
confocal family of quadrics has $d+1$ geometric types of quadrics.
In addition, quadrics of the same type have a nonempty intersection.
These seem to be impregnable obstacles to an extension of methods
of applications of Jacobi type coordinates from the Euclidean case
(from our paper \cite{DragRadn2006}) to the pseudo-Euclidean spaces.

To overcome this crucial problem, we have been forced to create an
essentially new feature of confocal pencil of geometric quadrics in
pseudo-Euclidean $d$-dimensional spaces: the novelty of our approach
is based on introduction of a new discrete, combinatorial-geometric
structure associated to a confocal pencil, a colouring in $d$ colours,
which transforms a geometric quadric from the pencil into the union of
several \emph{relativistic quadrics}.
It turns out that these new
objects, relativistic quadrics, satisfy the properties PE1--PE5,
analogue of E1--E5, and lead us to a new notion of decorated Jacobi
coordinates.
A decorated Jacobi coordinate now is a pair of a
number, and a type-colour.
They allow us to develop methods we use
in further study of billiards within confocal quadrics in
pseudo-Euclidean spaces of arbitrary dimension.

The study of colouring and relativistic types of quadrics, which is
one of the main ingredients of the present paper, is closely related
to a study of what we call \emph{the discriminant sets}
$\Sigma^{+}$, $\Sigma^{-}$ attached to a confocal pencil of quadrics in
pseudo-Euclidean space, as the sets of the tropical lines of
quadrics. They are developable, with light-like generatrices.
Their
swallowtail type singularities (see \cite{ArnoldSing}) are placed at the vertices of
curvilinear tetrahedra $\mathcal{T}^{+}$ and $\mathcal{T}^{-}$.

Billiards within ellipsoids in pseudo-Euclidean space are introduced
in \cite{KhTab2009}, and that paper served as a motivation for our
study.
Along the first two following sections, Section
\ref{sec:intropseudo} and Section \ref{sec:plane} some of the
properties from \cite{KhTab2009} are discussed, clarified or
slightly improved.

In Section \ref{sec:intropseudo}, we give a necessary account on
pseudo-Euclidean spaces and their confocal families of geometric
quadrics.
Our main new result in this Section is Theorem \ref{th:parametri.kaustike} where we give a complete
description of structures of types of quadrics from a confocal pencil in a
pseudo-Euclidean space, which are tangent to a given line.
This theorem
is going to play an essential role in proving properties PE3-PE5 in
Section \ref{sec:periodic}.
In Section \ref{sec:plane} we discuss geometric properties
of elliptical billiards in dimension $2$.
An elementary, but
complete description of periodic light-like trajectories is derived in Theorem \ref{th:arctan} and Proposition \ref{prop:rectangle}.
%The corresponding Fomenko graphs are also constructed.
In Section \ref{sec:relativistic} we suggest a new setting of types of confocal quadrics, an essential
novelty of the pseudo-Euclidean geometry.
In the three-dimensional case, we give a detailed description of discriminant surfaces $\Sigma^{\pm}$,
the unions of tropical lines of geometric quadrics from a pencil: see
Propositions \ref{prop:tropic.surface}, \ref{prop:tropic.light}, \ref{prop:tetra.hyp}, \ref{prop:hyp.y.presek}.
We describe their singularity subsets, curvilinear
tetrahedra $\mathcal{T}^{\pm}$ in Proposition \ref{prop:tetra}.
We introduce decorated Jacobi
coordinates in Section \ref{sec:rel3} for three-dimensional Minkowski space, and we
give a detailed description of the colouring in three colours, with a
complete descrpition of all three relativistic types of quadrics. In
Section \ref{sec:reld} we generalize the definition of decorated Jacobi coordinates to
arbitrary dimension, and in Proposition \ref{prop:svojstva12} we prove the properties
PE1-PE2.
In Section \ref{sec:periodic} we apply the technique of relativistic quadrics and
decorated Jacobi coordinates to solve the problem of analytic description
of periodical billiard trajectories.
Theorem \ref{th:type} gives an effective
criteria to determine a type of a billiard trajectory.
In Proposition \ref{prop:PE3-PE5} we prove
the properties PE3--PE5.
Finally, we give an analytic description of all
periodic billiard trajectories in pseudo-Euclidean spaces in Theorem \ref{th:cayley}.
As a corollary, in Theorem \ref{th:poncelet} we prove a full Poncelet-type theorem for
the pseudo-Euclidean spaces.

\section{Pseudo-Euclidean spaces and confocal families of quadrics}
\label{sec:intropseudo}
In this section, we first give a necessary account of basic notions connected with pseudo-Euclidean spaces, see Section \ref{sec:pseudo}.
After that, in Section \ref{sec:confocal}, we review and improve some basic facts on confocal families of quadrics in such spaces.
Our main result in this Section is a complete analysis of quadrics from a confocal family that are touching a given line, as formulated in Theorem \ref{th:parametri.kaustike}.

\subsection{Pseudo-Euclidean spaces}
\label{sec:pseudo}

%\subsubsection*{Definition of pseudo-Euclidean space}

\emph{Pseudo-Euclidean space} $\mathbf{E}^{k,l}$ is a
$d$-dimensional space $\mathbf{R}^d$ with \emph{pseudo-Euclidean scalar product}:
\begin{equation}\label{eq:scalar.product}
\langle x,y
\rangle_{k,l}=x_1y_1+\dots+x_ky_k-x_{k+1}y_{k+1}-\dots-x_dy_d.
\end{equation}
Here, $k,l\in\{1,\dots, d-1\}$, $k+l=d$.
Pair $(k,l)$ is called \emph{signature} of the space.
Denote $E_{k,l}=\diag(1,1,\dots,1,-1,\dots,-1)$, with $k$ $1$'s and $l$ $-1$'s.
Then the pseudo-Euclidean scalar product is:
$$
\langle x,y \rangle_{k,l} = E_{k,l}x\circ y,
$$
where $\circ$ is the standard Euclidean product.

\emph{The pseudo-Euclidean distance} between points $x$, $y$ is:
$$
\dist_{k,l}(x,y)=\sqrt{\langle{x-y,x-y}\rangle_{k,l}}.
$$
Since the scalar product can be negative, notice that the pseudo-Euclidean distance can have imaginary values as well.

Let $\ell$ be a line in the pseudo-Euclidean space, and $v$ its
vector. $\ell$ is called:
\begin{itemize}
\item
\emph{space-like} if $\langle{v,v}\rangle_{k,l}>0$;
\item
\emph{time-like} if $\langle{v,v}\rangle_{k,l}<0$;
\item
and \emph{light-like} if $\langle{v,v}\rangle_{k,l}=0$.
\end{itemize}
Two vectors $x$, $y$ are \emph{orthogonal} in the pseudo-Euclidean
space if $\langle x,y \rangle_{k,l}=0$. Note that a light-like line
is orthogonal to itself.

For a given vector $v\neq0$,
consider a hyper-plane $v\circ x=0$. Vector $E_{k,l}v$ is orthogonal
the hyper-plane; moreover, all other orthogonal vectors are
collinear with $E_{k,l}v$. If $v$ is light-like, then so is
$E_{k,l}v$, and $E_{k,l}v$ belongs to the hyper-plane.

\subsubsection*{Billiard reflection in pseudo-Euclidean space}

Let $v$ be a vector and $\alpha$ a hyper-plane in the
pseudo-Euclidean space. Decompose vector $v$ into the sum
$v=a+n_{\alpha}$ of a vector $n_{\alpha}$ orthogonal to $\alpha$ and
$a$ belonging to $\alpha$. Then vector $v'=a-n_{\alpha}$ is
\emph{the billiard reflection} of $v$ on $\alpha$.
It is easy to see that then $v$ is also the billiard reflection of $v'$ with respect to 
$\alpha$.

Moreover, let us note that lines containing vectores $v$, $v'$, $a$, $n_{\alpha}$ are harmonically conjugated \cite{KhTab2009}.

Note that $v=v'$ if $v$ is contained in $\alpha$ and $v'=-v$ if it
is orthogonal to $\alpha$. If $n_{\alpha}$ is light-like, which
means that it belongs to $\alpha$, then the reflection is not
defined.

Line $\ell'$ is a billiard reflection of $\ell$ off a smooth surface
$\mathcal{S}$ if their intersection point $\ell\cap\ell'$ belongs to $\mathcal{S}$
and the vectors of $\ell$, $\ell'$ are reflections of each other with respect to the tangent plane of $\mathcal{S}$ at this point.

\begin{remark}\label{remark:type}
It can be seen directly from the definition of reflection that the type of line is preserved by the billiard reflection.
Thus, the lines containing segments of a given billiard trajectory within $\mathcal{S}$ are all of the same type: they are all either space-like, time-like, or light-like.
\end{remark}

If $\mathcal{S}$ is an ellipsoid,
then it is possible to extend the reflection mapping to those points
where the tangent planes contain the orthogonal vectors. At such
points, a vector reflects into the opposite one, i.e.~$v'=-v$ and
$\ell'=\ell$. For the explanation, see \cite{KhTab2009}.
As follows from the explanation given there, it is natural to consider each such reflection as two reflections.

\subsection{Families of confocal quadrics}
\label{sec:confocal}
For a given set of positive constants $a_1$, $a_2$, \dots, $a_d$, an
ellipsoid is given by:
\begin{equation}\label{eq:ellipsoid}
\mathcal{E}\ :\
\frac{x_1^2}{a_1}+\frac{x_2^2}{a_2}+\dots+\frac{x_d^2}{a_d}=1.
\end{equation}
Let us remark that equation of any ellipsoid in the pseudo-Euclidean
space can be brought into the canonical form (\ref{eq:ellipsoid})
using transformations that preserve the scalar product
(\ref{eq:scalar.product}).

The family of quadrics confocal with $\mathcal{E}$ is:
\begin{equation}\label{eq:confocal}
 \mathcal{Q}_{\lambda}\ :\
\frac{x_1^2}{a_1-\lambda} +\dots+ \frac{x_k^2}{a_k-\lambda} +
\frac{x_{k+1}^2}{a_{k+1}+\lambda} +\dots+
\frac{x_d^2}{a_d+\lambda}=1,\qquad\lambda\in\mathbf{R}.
\end{equation}

Unless stated differently, we are going to consider the
non-degenerate case, when set
$\{a_1,\dots,a_k,-a_{k+1},\dots,-a_{d}\}$ consists of $d$ different
values:
$$
a_1>a_2>\dots>a_k>0>-a_{k+1}>\dots>-a_d.
$$

For $\lambda\in\{a_1,\dots,a_k,-a_{k+1},\dots,-a_{d}\}$, the quadric
$\mathcal Q_{\lambda}$ is degenerate and it coincides with the
corresponding coordinate hyper-plane.

It is natural to join one more degenerate quadric to the family
(\ref{eq:confocal}): the one corresponding to the value
$\lambda=\infty$, that is the hyper-plane at the infinity.

For each point $x$ in the space, there are exactly $d$ values of
$\lambda$, such that the relation (\ref{eq:confocal}) is satisfied.
However, not all the values are necessarily real: either all $d$ of
them are real or there are $d-2$ real and $2$ conjugate complex
values. Thus, through every point in the space, there are either $d$
or $d-2$ quadrics from the family (\ref{eq:confocal})
\cite{KhTab2009}.

The line $x+tv$ ($t\in\mathbf{R}$) is tangent to quadric
$\mathcal{Q}_{\lambda}$ if quadratic equation:
\begin{equation}\label{eq:tangent}
A_{\lambda}(x+tv)\circ(x+tv)=1,
\end{equation}
has a double root.
Here we denoted:
$$
A_{\lambda}=\diag\left( \frac{1}{a_1-\lambda}, \cdots,
\frac{1}{a_k-\lambda}, \frac{1}{a_{k+1}+\lambda}, \cdots,
\dfrac{1}{a_d+\lambda} \right).
$$

Now, calculating the discriminant of (\ref{eq:tangent}), we get:
\begin{equation}\label{eq:diskriminanta}
(A_{\lambda}x\circ v)^2 -
(A_{\lambda}v{\circ}v)(A_{\lambda}x{\circ}x-1) =0,
\end{equation}
which is equivalent to:
\begin{equation}\label{eq:discr}
\sum_{i=1}^d \frac{\varepsilon_i F_i(x,v)}{a_i-\varepsilon_i\lambda}=0,
\end{equation}
where
\begin{equation}\label{eq:integralsF}
F_i(x,v)=\varepsilon_iv_i^2 + \sum_{j{\neq}i}
\frac{(x_iv_j-x_jv_i)^2}{\varepsilon_ja_i-\varepsilon_ia_j},
\end{equation}
with $\varepsilon$'s given by:
\begin{equation*}%\label{eq:epsilon}
\varepsilon_i=\begin{cases}
1, & 1\le i\le k;\\
-1, & k+1\le i\le d.
\end{cases}
\end{equation*}

The equation (\ref{eq:discr}) can be transformed to:
\begin{equation}\label{eq:polinom.P}
\frac{\mathcal{P}(\lambda)}{\prod_{i=1}^d(a_i-\varepsilon_i\lambda)}=0,
\end{equation}
where the coefficient of $\lambda^{d-1}$ in $\mathcal{P}(\lambda)$
 is equal to $\langle{v,v}\rangle_{k,l}$.
Thus, polynomial $\mathcal{P}(\lambda)$ is of degree $d-1$ for space-like and time-like
lines, and of a smaller degree for light-like lines.
However, in the latter case, it turns out to be natural to consider the
polynomial $\mathcal{P}(\lambda)$ also as of degree $d-1$, taking
the corresponding roots to be equal to infinity.
So, light-like
lines are characterized by being tangent to the quadric
$\mathcal{Q}_{\infty}$.

Having this setting in mind, we note that  it is proved in \cite{KhTab2009} that the polynomial
$\mathcal{P}(\lambda)$ has at least $d-3$ roots in $\mathbf{R}\cup\{\infty\}$.

Thus, we have:

\begin{proposition}\label{prop:kaustike}
Any line in the space is tangent to either $d-1$ or $d-3$ quadrics
of the family (\ref{eq:confocal}).
If this number is equal to $d-3$,
then there are two conjugate complex values of $\lambda$, such that
the line is tangent also to these two quadrics in $\mathbf{C}^d$.
\end{proposition}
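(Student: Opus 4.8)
The plan is to turn the statement into a count of roots of the polynomial $\mathcal{P}(\lambda)$ from (\ref{eq:polinom.P}) lying in $\mathbf{R}\cup\{\infty\}$, and then combine the cited lower bound with a parity argument.

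First I would record, from the computation preceding the proposition, that a line $x+tv$ is tangent to $\mathcal{Q}_\lambda$ precisely when the discriminant (\ref{eq:diskriminanta}) of (\ref{eq:tangent}) vanishes, i.e.\ when $\lambda$ solves (\ref{eq:discr}), equivalently (\ref{eq:polinom.P}); away from the degenerate values $\varepsilon_i a_i$ --- which are poles of the left-hand side of (\ref{eq:discr}) --- this says exactly that $\mathcal{P}(\lambda)=0$. Since the coefficient of $\lambda^{d-1}$ in $\mathcal{P}$ is $\langle v,v\rangle_{k,l}$, the polynomial $\mathcal{P}$ has degree exactly $d-1$ for space-like and time-like lines, while for light-like lines the deficiency is absorbed into roots at $\lambda=\infty$, corresponding to tangency to $\mathcal{Q}_\infty$; in all cases $\mathcal{P}$ therefore carries exactly $d-1$ roots in $\mathbf{C}\cup\{\infty\}$ counted with multiplicity, and the number of confocal quadrics tangent to the line equals the number of these that lie in $\mathbf{R}\cup\{\infty\}$.

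Next, all coefficients of $\mathcal{P}$ are real (built from the real data $a_i$, $x_i$, $v_i$, $\varepsilon_i$), so its non-real roots come in complex-conjugate pairs. By the result quoted above and proved in \cite{KhTab2009}, at least $d-3$ of the $d-1$ roots lie in $\mathbf{R}\cup\{\infty\}$, so at most two are non-real; being an even number, that count is $0$ or $2$, and hence the number of tangent quadrics is $d-1$ or $d-3$. In the latter case the two missing roots form a conjugate pair $\lambda_0,\bar\lambda_0\in\mathbf{C}\setminus\mathbf{R}$; since the tangency relation (\ref{eq:tangent})--(\ref{eq:diskriminanta}) is polynomial in the coordinates, it defines tangency of the complexified line to $\mathcal{Q}_\lambda$ in $\mathbf{C}^d$ for complex $\lambda$ as well, so the line is tangent to $\mathcal{Q}_{\lambda_0}$ and $\mathcal{Q}_{\bar\lambda_0}$ over $\mathbf{C}$.

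Two points need a little care. First, the bookkeeping at the degenerate values $\varepsilon_i a_i$ and at $\lambda=\infty$: one should check that a root of $\mathcal{P}$ landing on such a value does not distort the count --- for $\infty$ this is handled by the $\mathcal{Q}_\infty$ convention already adopted, and otherwise by observing that in the generic situation $\mathcal{P}$ and $\prod_i(a_i-\varepsilon_i\lambda)$ have no common zero, so the count is read off honestly from $\mathcal{P}$ (and, if one wants the count of \emph{distinct} tangent quadrics, a multiple root reduces it accordingly, the stated values being the generic ones). Second, the genuinely substantial ingredient is the bound of $d-3$ real roots; were it not already available from \cite{KhTab2009}, establishing it --- via a sign analysis of the rational function in (\ref{eq:discr}) across its poles $a_1>\dots>a_k>0>-a_{k+1}>\dots>-a_d$, using that the residue of the $i$-th term at $\lambda=\varepsilon_i a_i$ is $-F_i(x,v)$ --- is the step I would expect to be the main obstacle. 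Granted that bound, the remainder is the elementary parity argument above.
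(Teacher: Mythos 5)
Your proposal is correct and follows essentially the same route as the paper: the paper likewise reduces tangency to counting roots of the degree-$(d-1)$ polynomial $\mathcal{P}(\lambda)$ in $\mathbf{R}\cup\{\infty\}$ (with the $\mathcal{Q}_\infty$ convention for light-like lines), imports from \cite{KhTab2009} the fact that at least $d-3$ of these roots are real, and concludes by the conjugate-pair parity argument, explicitly deferring the full proof to \cite{KhTab2009}. Your additional remarks on bookkeeping at the degenerate values and on where the real work lies are accurate but do not change the argument.
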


This statement with the proof is given in \cite{KhTab2009}.
Let us remark that in \cite{KhTab2009} is claimed that light-like line have
only $d-2$ or $d-4$ caustic quadrics.
That is because $\mathcal{Q}_{\infty}$ is not considered there as a member of the confocal family.

As noted in \cite{KhTab2009}, a line having non-empty intersection with an ellipsoid from (\ref{eq:confocal}) will be tangent to $d-1$ quadrics from the confocal family.
However, we are going to prove this in another way, which will provide us some more insight into the distribution of the parameters of the caustics along the real axis.
Next theorem will also contain a detailed description of distribution of parameters of quadrics containing given point placed inside an ellipsoid from (\ref{eq:confocal}).

\begin{theorem}\label{th:parametri.kaustike}
In pseudo-Euclidean space $\mathbf{E}^{k,l}$ consider a line intersecting ellipsoid $\mathcal{E}$ (\ref{eq:ellipsoid}).
Then this line is touching $d-1$ quadrics from (\ref{eq:confocal}).
If we denote their parameters by $\alpha_1$, \dots, $\alpha_{d-1}$ and take:
\begin{gather*}
\{b_1,\ \dots,\ b_p,\ c_1,\ \dots,\ c_q\}=\{\varepsilon_{1}a_1,\ \dots,\ \varepsilon_{d}a_d,\ \alpha_1,\ \dots,\ \alpha_{d-1}\},\\
c_q\le\dots\le c_2\le c_1<0<b_1\le b_2\le\dots\le b_p,\quad p+q=2d-1,
\end{gather*}
%with $0<b_1\le b_2\le\dots\le b_p$, $0>c_1\ge c_2\ge\dots\ge c_q$, $p+q=2d-1$,
we will additionally have:
\begin{itemize}
 \item
if the line is space-like, then $p=2k-1$, $q=2l$, $a_1=b_p$, $\alpha_i\in\{b_{2i-1},b_{2i}\}$ for $1\le i\le k-1$, and $\alpha_{j+k-1}\in\{c_{2j-1},c_{2j}\}$ for $1\le j\le l$;
 \item
if the line is time-like, then $p=2k$, $q=2l-1$, $c_q=-a_d$, $\alpha_i\in\{b_{2i-1},b_{2i}\}$ for $1\le i\le k$, and $\alpha_{j+k}\in\{c_{2j-1},c_{2j}\}$ for $1\le j\le l-1$;
 \item
if the line is light-like, then $p=2k$, $q=2l-1$, $b_p=\infty=\alpha_k$, $b_{p-1}=a_1$, $\alpha_i\in\{b_{2i-1},b_{2i}\}$ for $1\le i\le k-1$, and $\alpha_{j+k}\in\{c_{2j-1},c_{2j}\}$ for $1\le j\le l-1$.
\end{itemize}
Moreover, for each point on $\ell$ inside $\mathcal{E}$, there is exactly $d$ distinct quadrics from (\ref{eq:confocal}) containing it.
More precisely, there is exactly one parameter of these quadrics in each of the intervals:
$$
(c_{2l-1},c_{2l-2}),\ \dots,\ (c_3,c_2),\ (c_1,0),\ (0,b_1),\ (b_2,b_3),\ \dots,\ (b_{2k-2},b_{2k-1}).
$$
\end{theorem}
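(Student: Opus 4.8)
The plan is to analyze the rational function that appears in equation (\ref{eq:discr}), viewed as a function of $\lambda$. Setting $f(\lambda)=\sum_{i=1}^d \dfrac{\varepsilon_i F_i(x,v)}{a_i-\varepsilon_i\lambda}$, I would first record the sign of each coefficient $\varepsilon_i F_i(x,v)$ for a line meeting $\mathcal{E}$. The key observation is that $F_i(x,v)$ is, up to the factor $\varepsilon_i$, a sum of squares divided by differences $\varepsilon_j a_i-\varepsilon_i a_j$; one checks that for the relevant configuration (a point inside $\mathcal{E}$, or along a chord of $\mathcal{E}$) these quantities have a definite sign, so that $f$ is a genuine "interlacing" rational function: it is monotone between consecutive poles and each summand contributes a pole at $\lambda=\varepsilon_i a_i$ whose residue has a predictable sign. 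This is the standard mechanism by which one proves the Euclidean statement, and the point of the theorem is to push it through in the pseudo-Euclidean case where the poles $\varepsilon_i a_i$ are no longer all positive but split as $a_1>\dots>a_k>0>-a_{k+1}>\dots>-a_d$.

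Next I would clear denominators to pass from (\ref{eq:discr}) to (\ref{eq:polinom.P}), so that the caustic parameters $\alpha_1,\dots,\alpha_{d-1}$ are exactly the roots of $\mathcal{P}(\lambda)$, a polynomial of degree $d-1$ (with the convention that light-like lines have a root at $\infty$, as already discussed in the excerpt). An intermediate-value / sign-change argument on $f$ between consecutive poles then forces one root of $\mathcal{P}$ in each "admissible" gap, and this is what yields the interlacing statements $\alpha_i\in\{b_{2i-1},b_{2i}\}$ etc.: after merging the sorted list of poles $\{\varepsilon_i a_i\}$ with the sorted list of roots $\{\alpha_i\}$ into the single sequence $c_q\le\dots\le c_1<0<b_1\le\dots\le b_p$, each root is sandwiched between two consecutive poles, which is precisely the bracketing the theorem asserts. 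The three cases (space-like, time-like, light-like) are separated by the value of the leading coefficient of $\mathcal{P}$, which equals $\langle v,v\rangle_{k,l}$: for space-like lines $\deg\mathcal{P}=d-1$ with a root escaping above $a_1$ forbidden (hence $a_1=b_p$ and no caustic beyond it on the positive side, matching $p=2k-1$), for time-like lines the analogous phenomenon occurs on the negative side ($c_q=-a_d$, $q=2l-1$), and for light-like lines the degree drops and the "missing" root is placed at $\infty=\alpha_k$, with $a_1=b_{p-1}$.

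For the last paragraph of the statement — that each point on $\ell$ inside $\mathcal{E}$ lies on exactly $d$ distinct confocal quadrics, one in each of the listed open intervals — I would argue as follows. Fixing such a point $x$, the equation (\ref{eq:confocal}) in $\lambda$ is again a rational equation $g(\lambda)=1$ with poles at $\varepsilon_i a_i$; since $x$ is inside $\mathcal{E}$ one has $\sum x_i^2/a_i<1$, and examining the behavior of $g$ near each pole and at $\pm\infty$ (using that $x$ is interior, so the relevant one-sided limits at each finite pole go to $+\infty$ from the appropriate side) produces exactly one solution in each gap between consecutive poles of $g$, together with the boundary behavior at $0$ and at $a_1$; the list of intervals in the theorem is exactly the list of these gaps once the caustic parameters are interlaced among the $a_i$'s. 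The main subtlety — and I expect the genuine obstacle — is bookkeeping the signs of $\varepsilon_i F_i(x,v)$ and of the residues uniformly across all three line types, i.e. verifying that the sign pattern really is the "interlacing" one and does not degenerate; this is where the hypothesis that $\ell$ meets the ellipsoid $\mathcal{E}$ (equivalently, that the chord has a point with $\sum x_i^2/a_i<1$) is essential, and it must be fed in carefully, because without it the function $f$ need not change sign in every gap and one genuinely gets only $d-3$ caustics as in Proposition \ref{prop:kaustike}. A clean way to organize this is to evaluate $f$ (or $\mathcal{P}$) at the degenerate parameters $\lambda=\varepsilon_i a_i$ and at $\lambda=0$, reading off the signs from the squared terms, and then invoke continuity; the light-like case additionally requires checking the behavior as $\lambda\to\infty$ against $\langle v,v\rangle_{k,l}=0$.
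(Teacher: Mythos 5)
Your overall strategy (clear denominators, evaluate at well-chosen points, and force sign changes of $\mathcal{P}$ between consecutive poles by the intermediate value theorem) is in the right spirit, but the mechanism you designate as the ``key observation'' is false, and without it the interlacing does not follow. You claim that the residues $\varepsilon_i F_i(x,v)$ of $f$ at the poles $\varepsilon_i a_i$ have a definite, predictable sign for a chord of $\mathcal{E}$, so that $f$ is monotone between consecutive poles. Looking at (\ref{eq:integralsF}), the denominators $\varepsilon_j a_i-\varepsilon_i a_j$ occurring in $F_i$ have mixed signs (already for $i,j\le k$ the sign of $a_i-a_j$ depends on whether $j<i$ or $j>i$), so $F_i$ is not a sum of same-signed terms; and in fact the sign of $F_i$ genuinely varies with the chord. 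Already in the two-dimensional Euclidean picture, $F_1+F_2=|v|^2$ and the caustic parameter is $\alpha=(F_1a_2+F_2a_1)/(F_1+F_2)$, so $F_2>0$ exactly when the caustic is a confocal hyperbola and $F_2<0$ when it is a confocal ellipse --- both occur for chords of the ellipse. So there is no uniform sign pattern of residues to ``bookkeep,'' and the monotonicity of $f$ between poles fails; this is not merely a subtlety to be checked but the point where the argument breaks. The paper's proof avoids residues entirely: it introduces the auxiliary polynomial $\mathcal{R}(\lambda)$, the numerator of $A_\lambda v\circ v$, shows by explicit sign evaluation at the poles and at $\pm\infty$ that its roots $\zeta_0,\dots,\zeta_{d-2}$ are real and interlace the poles, and then observes that at each $\zeta_i$ the discriminant (\ref{eq:diskriminanta}) collapses to the perfect square $(A_{\zeta_i}x\circ v)^2\ge 0$. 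These $\zeta_i$, together with $\lambda=0$ (where interiority of $x$ gives $A_0x\circ x<1$ and $A_0v\circ v>0$), supply $d-1$ anchor intervals on each of which (\ref{eq:polinom.P}) is positive at both endpoints while containing exactly one pole; each such interval must therefore contain a root of $\mathcal{P}$. That device, or some substitute for it, is missing from your proposal.

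A second, smaller gap concerns the last paragraph of the statement. Your analysis of $g(\lambda)=A_\lambda x\circ x$ near its poles only localizes the $d$ Jacobi parameters of an interior point within the gaps between consecutive poles $\varepsilon_ia_i$. But the theorem asserts the finer localization within the gaps of the \emph{merged} list $b_1,\dots,b_p,c_1,\dots,c_q$, i.e.\ it also says on which side of each caustic parameter $\alpha_j$ the Jacobi coordinates fall. This refinement again comes from the discriminant: at a solution of $A_\lambda x\circ x=1$ the right-hand side of (\ref{eq:diskriminanta}) equals $(A_\lambda x\circ v)^2\ge0$, which excludes precisely the subintervals $(b_{2i-1},b_{2i})$ and $(c_{2j},c_{2j-1})$ where (\ref{eq:polinom.P}) is negative. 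Without feeding in this positivity your argument proves a strictly weaker localization than the one claimed.
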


\begin{proof}
Denote by $\ell$ a line that intersects $\mathcal{E}$, by $x=(x_1,\dots,x_d)$ a point on $\ell$ which is placed inside $\mathcal{E}$, and by $v=(v_1,\dots,v_d)$ a vector of the line.
Then the parameters of quadrics touching $\ell$ are solutions of equation (\ref{eq:diskriminanta}), i.e.\ they are roots of polynomial $\mathcal{P}(\lambda)$.

If $\ell$ is space-like or time-like, that is $\langle v,v\rangle_{k,l}\neq0$,
we have that polynomial $\mathcal{P}$ in then of degree $d-1$ and we want to prove that all its $d-1$ roots are real.
On the other hand, if $\ell$ is light-like, $\mathcal{P}$ is of degree $d-2$ and we are going to prove that all $d-2$ roots are real, and to add $\infty$ as the $(d-1)$-th root. 

Notice that at the real solutions of the equation $A_\lambda v\circ v=0$, the right-hand side of (\ref{eq:diskriminanta}) is positive. So let us first examine roots of $A_\lambda v\circ v$.

We have:
$$
A_\lambda v\circ v=\sum_{i=1}^d\frac{v_i^2}{a_i-\varepsilon_i\lambda}=\frac{\mathcal{R}(\lambda)}{\prod_{i=1}^d(a_i-\varepsilon_i\lambda)},
$$
with
$$
\mathcal{R}(\lambda)=\sum_{i=1}^d v_i^2\prod_{j\neq i}(a_j-\varepsilon_j\lambda).
$$
We calculate:
\begin{align*}
&\sign\mathcal{R}(\varepsilon_i a_i)=\varepsilon_i(-1)^{k+i},
\\
&\sign\mathcal{R}(-\infty)=(-1)^l\sign\langle v,v\rangle_{k,l},
\\
&\sign\mathcal{R}(+\infty)=(-1)^{k-1}\sign\langle v,v\rangle_{k,l}.
\end{align*}
From there, we see that polynomial $\mathcal{R}$, which is of degree $d-1$ for space-like or time-like $\ell$ and of degree $d-2$ for light-like $\ell$, is changing sign at least $d-1$ times along the real axis, if $\langle v,v\rangle_{k,l}\neq0$, and at least $d-2$ times otherwise, so all its roots are real.
Moreover, there is one root in each of the $d-2$ intervals:
$$
(\varepsilon_i a_{i+1},\varepsilon_i a_i),\quad  i\in\{1,\dots,k-1,k+1,\dots,d-1\},
$$
and one more in $(-\infty,-a_d)$ or $(a_1,+\infty)$ if $\ell$ is space-like or time-like respectively.

Denote roots of $\mathcal{R}$ by $\zeta_0$, $\zeta_1$, \dots, $\zeta_{d-2}$, and order them in the following way:
\begin{align*}
&\zeta_i\in(a_{i+1},a_i), \ \text{for}\ 1\le i\le k-1,
  \\
&\zeta_j\in(-a_{j+2},-a_{j+1}), \ \text{for}\ k\le j\le d-2,
  \\
&\zeta_0\in(-\infty,-a_d) \ \text{for space-like}\ \ell,
  \\
&\zeta_0\in(a_1,+\infty) \ \text{for time-like}\ \ell,
  \\
&\zeta_0=\infty\ \text{for light-like}\ \ell.
\end{align*}

Note only that the right-hand side of (\ref{eq:diskriminanta}) is positive for $\zeta_0$, \dots, $\zeta_{d-2}$, it will be also be positive for $\lambda=0$, because $(A_{0}x\circ v)^2\ge0$, $A_{0}v{\circ}v>0$, and, since $x$ is inside $\mathcal{E}=\mathcal{Q}_0$, $A_{0}x{\circ}x<1$.
Notice that (\ref{eq:diskriminanta}) and the equivalent expression (\ref{eq:polinom.P}) changes sign at points $\varepsilon_i a_i$ and roots of $\mathcal{P}$ only. 
Thus, these expressions have positive values at the endpoints of each of the $d-2$ intervals:
$(\zeta_{d-2},\zeta_{d-3})$, \dots, $(\zeta_{k+1},\zeta_k)$, $(\zeta_k,0)$,
$(0,\zeta_{k-1})$, $(\zeta_{k-1},\zeta_{k-2})$, \dots, $(\zeta_2,\zeta_1)$,
and, in addition, in one of $(\zeta_0,\zeta_{d-2})$ or $(\zeta_1,\zeta_0)$,
depending if $\ell$ is space-like or time-like respectively.
Each of these $d-1$ intervals contains one point from $\{\varepsilon_i a_i\}$, thus each of them needs to contain at least one more point where expression (\ref{eq:polinom.P}) changes its sign, that is a root of $\mathcal{P}$.
Conclude that all roots of $\mathcal{P}$ are real, and that they are distributed exactly as stated in this proposition.

Now, let us consider quadrics from the confocal family containing point $x$.
Their parameters are solutions of the equation $A_{\lambda}x\circ x=1$.
Observe that $A_{\lambda}x\circ x-1$ is strictly monotonous and changes sign inside each of the following intervals:
$$
(-a_d, -a_{d-1}),\ \dots,\ (-a_{k+2},-a_{k+1}),\ (a_k,a_{k-1}),\ \dots,\ (a_2,a_1),
$$
thus it has one root in each of them.
On the other hand, for such solutions, the right-hand side of (\ref{eq:diskriminanta}) is positive,
thus there is one solution in each of the following:
$$
(c_{2l-1},c_{2l-2}),\ \dots,\ (c_3,c_2),\ (b_2,b_3),\ \dots,\ (b_{2k-2},b_{2k-1}),
$$
which makes $d-2$ solutions.
Two more solutions are placed in $(c_1,b_1)$, because $A_{0}x\circ x-1<0$ and $A_{\lambda}x\circ x-1>0$ close to the endpoints of this interval, which concludes the proof.
\end{proof}

The analog of Theorem \ref{th:parametri.kaustike} for the Euclidean space, is proved in \cite{Audin1994}.

\begin{corollary}
For each point placed inside an ellipsoid in the pseudo-Euclidean space, there are exactly two other ellipsoids from the confocal family containing this point.
\end{corollary}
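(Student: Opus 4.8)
The plan is to read the statement off Theorem~\ref{th:parametri.kaustike} once one knows which members of the pencil (\ref{eq:confocal}) are ellipsoids. Since any ellipsoid can be brought to the canonical form (\ref{eq:ellipsoid}) by a transformation preserving (\ref{eq:scalar.product}), and such a transformation maps confocal families to confocal families, I may assume the ellipsoid is $\mathcal{E}$. The quadric $\mathcal{Q}_\lambda$ is an ordinary ellipsoid exactly when all its coefficients $1/(a_i-\lambda)$ ($i\le k$) and $1/(a_i+\lambda)$ ($i>k$) are positive, i.e.\ exactly when $\lambda\in(-a_{k+1},a_k)$. This open interval contains $\lambda=0$ (which gives $\mathcal{E}$ itself) and contains none of the poles $\varepsilon_ia_i$ of $\lambda\mapsto A_\lambda x\circ x$, its endpoints being the two such poles closest to $0$.

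Now fix a point $x$ strictly inside $\mathcal{E}$ and choose any line $\ell$ through $x$; since $\mathcal{E}$ is a bounded hypersurface and $x$ is interior, $\ell$ meets $\mathcal{E}$, so Theorem~\ref{th:parametri.kaustike} applies to $(\ell,x)$ and gives exactly $d$ distinct quadrics of the pencil through $x$, with exactly one of their parameters in each of the intervals
$$
(c_{2l-1},c_{2l-2}),\ \dots,\ (c_3,c_2),\ (c_1,0),\ (0,b_1),\ (b_2,b_3),\ \dots,\ (b_{2k-2},b_{2k-1}).
$$
It then remains to count how many of these intervals are contained in $(-a_{k+1},a_k)$. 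Because $a_k$ and $-a_{k+1}$ appear among the $\varepsilon_ia_i$, hence among the $b_i$ and $c_i$, one has for free $b_1\le a_k$ and $c_1\ge -a_{k+1}$, so $(c_1,0)$ and $(0,b_1)$ are both contained in $(-a_{k+1},a_k)$: this yields two ellipsoids through $x$. Conversely, the relations $\alpha_1\in\{b_1,b_2\}$ and, on the negative side, $\alpha_k\in\{c_1,c_2\}$ (or $\alpha_{k+1}\in\{c_1,c_2\}$ in the time-like and light-like cases), valid by Theorem~\ref{th:parametri.kaustike}, force the non-caustic element of $\{b_1,b_2\}$ to be the smallest of $a_1,\dots,a_k$, namely $a_k$, and the non-caustic element of $\{c_1,c_2\}$ to be the largest of $-a_{k+1},\dots,-a_d$, namely $-a_{k+1}$; hence $b_2\ge a_k$ and $c_2\le -a_{k+1}$. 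So every remaining interval $(b_{2j},b_{2j+1})$ consists of points exceeding $a_k$ and every remaining $(c_{2j+1},c_{2j})$ of points below $-a_{k+1}$, whence none of the other $d-2$ quadrics is an ellipsoid. Thus exactly two quadrics of the confocal family through $x$ are ellipsoids, and since $x$ is interior to $\mathcal{E}$ neither of them is $\mathcal{E}$, which is the claim.

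The only mildly delicate step is the combinatorial one just above, namely verifying that the innermost caustic parameters cannot displace $a_k$ and $-a_{k+1}$ from the first two slots of the $b$- and $c$-sequences; this is handled uniformly in all three line types by the membership relations supplied by Theorem~\ref{th:parametri.kaustike}, so no genuine obstacle arises. One could instead argue by the intermediate value theorem directly on $A_\lambda x\circ x-1$ over $(-a_{k+1},a_k)$ — it is negative at $\lambda=0$ and tends to $+\infty$ at the two endpoints whenever the relevant coordinates of $x$ do not vanish — but pinning the count down to \emph{exactly} two still requires the global total of $d$ roots from Theorem~\ref{th:parametri.kaustike}, so the route above is the most economical.
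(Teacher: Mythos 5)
Your proof is correct and follows the route the paper intends: the corollary is stated there without proof as an immediate consequence of Theorem~\ref{th:parametri.kaustike}, and your derivation---placing the two roots from $(c_1,0)$ and $(0,b_1)$ inside the ellipsoid parameter range $(-a_{k+1},a_k)$, and using the interleaving relations to get $b_2\ge a_k$ and $c_2\le -a_{k+1}$ so that none of the remaining intervals meets that range---supplies exactly the combinatorial content the paper leaves implicit. The only remark worth adding is that the step you flag as ``mildly delicate'' can be bypassed: the proof of Theorem~\ref{th:parametri.kaustike} already locates the other $d-2$ roots of $A_\lambda x\circ x=1$ one in each of $(-a_d,-a_{d-1}),\dots,(-a_{k+2},-a_{k+1}),(a_k,a_{k-1}),\dots,(a_2,a_1)$, all disjoint from $(-a_{k+1},a_k)$, which yields the ``at most two'' half directly.
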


%\subsection{Ellipsoidal billiards}
%\label{sec:billiards}
%\input{2-billiards/pseudobilliards}

\section{Elliptical billiard in the Minkowski plane}
\label{sec:plane}
In this part of the paper, we study  properties of confocal families of conics in the Minkowski plane, see Section \ref{sec:confocal.conics}.
We derive focal properties of such families and the corresponding elliptical billiards.
Next, in Section \ref{sec:light-like}, we study light-like trajectories of such billiards and derive a periodicity criterion in a simple form, see Theorem \ref{th:arctan}.
%The equivalence of this simple criterion with the more complicated condition of Cayley’s type is illustrated in several examples.
It is also proved in Proposition \ref{prop:rectangle} that the flow of light-like elliptical billiard trajectories is equivalent to a certain rectangular billiard flow.
%In Section \ref{sec:topology}, we conclude by a complete description of topological properties of elliptical billiards in the Minkowski plane using Fomenko invariants, see Theorem \ref{th:fomenko.minkowski}.

\subsection{Confocal conics in Minkowski plane}
\label{sec:confocal.conics}
Here, we give a review of basic properties of families of confocal
conics in the Minkowski plane.

Denote by
\begin{equation}\label{eq:ellipse}
\mathcal{E}\ :\ \frac{x^2}{a}+\frac{y^2}{b}=1
\end{equation}
an ellipse in the plane, with $a$, $b$ being fixed positive numbers.

The associated family of confocal conics is:
\begin{equation}\label{eq:confocal.conics} 
\mathcal C_{\lambda}\ :\
\frac{x^2}{a-\lambda}+\frac{y^2}{b+\lambda}=1, \quad
\lambda\in\mathbf{R}.
\end{equation}

The family is shown on Figure \ref{fig:confocal.conics}.
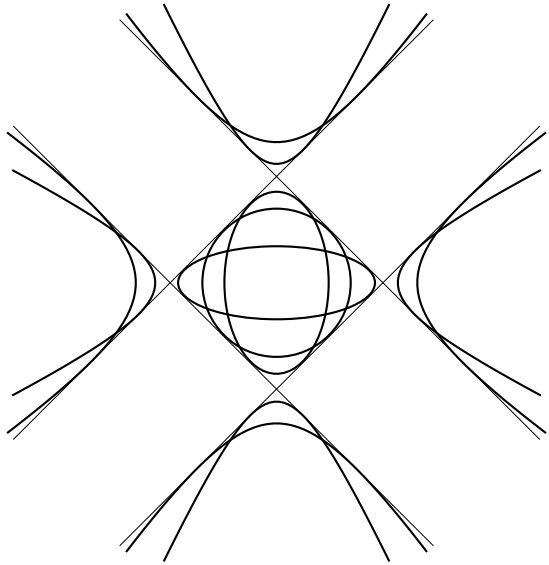
\begin{figure}[h]
% crtanje nekoliko konika iz familije $x^2/(1-\lambda)+y^2/(1+\lambda)=1$

\begin{pspicture}(-4,-4)(4,4)
  \SpecialCoor

  \psellipse(! 1 2 div sqrt 3 2 div sqrt) % elipsa lambda=1
  \psellipse(1, 1) % elipsa lambda=0
  \psellipse(! 7 4 div sqrt 1 4 div sqrt) % elipsa lambda=-3/4

  \psline[linewidth=0.35pt](! 3.5 2 sqrt 3.5 sub)(! 2 sqrt 3.5 sub 3.5)
  \psline[linewidth=0.35pt](! -3.5 3.5 2 sqrt sub)(! 3.5 2 sqrt sub -3.5)
  \psline[linewidth=0.35pt](! 3.5 3.5 2 sqrt sub)(! -3.5 2 sqrt add -3.5)
  \psline[linewidth=0.35pt](! 3.5 2 sqrt sub 3.5)(! -3.5 2 sqrt -3.5 add)

  \psplot{-2}{2}{1 x x mul 1 2.5 sub div sub 1 2.5 add mul sqrt}
  \psplot{-2}{2}{1 x x mul 1 2.5 sub div sub 1 2.5 add mul sqrt neg}

  \psplot{-1.5}{1.5}{1 x x mul 1 1.5 sub div sub 1 1.5 add mul sqrt}
  \psplot{-1.5}{1.5}{1 x x mul 1 1.5 sub div sub 1 1.5 add mul sqrt neg}

  \parametricplot{-2}{2}
   {
    1 t t mul 1 2.5 sub div sub 1 2.5 add mul sqrt
    t
   }

   \parametricplot{-2}{2}
   {
    1 t t mul 1 2.5 sub div sub 1 2.5 add mul sqrt neg
    t
   }

   \parametricplot{-1.5}{1.5}
   {
   1 t t mul 1 1.6 sub div sub 1 1.6 add mul sqrt
   t
   }

   \parametricplot{-1.5}{1.5}
   {
   1 t t mul 1 1.6 sub div sub 1 1.6 add mul sqrt neg
   t
   }

\end{pspicture}
\caption{Family of confocal conics in the Minkowski
plane.}\label{fig:confocal.conics}
\end{figure}
We may distinguish the following three subfamilies in the family
(\ref{eq:confocal.conics}):
\begin{itemize}
\item
for $\lambda\in(-b,a)$, conic $\mathcal{C}_{\lambda}$ is an ellipse;

\item
for $\lambda<-b$, conic $\mathcal{C}_{\lambda}$ is a hyperbola with $x$-axis as the major one;

\item
for $\lambda>a$, it is a hyperbola again, but now its major axis is $y$-axis.
\end{itemize}
In addition, there are three degenerated quadrics: $\mathcal{C}_{a}$, $\mathcal{C}_{b}$, $\mathcal{C}_{\infty}$ corresponding to $y$-axis, $x$-axis, and the line at the infinity respectively.
Note the following three pairs of foci:
$F_1(\sqrt{a+b},0)$, $F_2(-\sqrt{a+b},0)$; 
$G_1(0,\sqrt{a+b})$, $G_2(0,-\sqrt{a+b})$; and 
$H_1(1:-1:0)$, $H_2(1:1:0)$ on the line at the infinity.

We notice four distinguished lines: 
\begin{align*}
&x+y=\sqrt{a+b},\quad x+y=-\sqrt{a+b},\\
&x-y=\sqrt{a+b},\quad x-y=-\sqrt{a+b}.
\end{align*}
These lines
are common tangents to all conics from the confocal family.

It is elementary and straightforward to prove the following
\begin{proposition}
For each point on ellipse $\mathcal{C}_{\lambda}$, $\lambda\in(-b,a)$,
either sum or difference of its Minkowski distances from the foci $F_1$ and
$F_2$ is equal to $2\sqrt{a-\lambda}$;
either sum or difference
of the distances from the other pair of foci $G_1$, $G_2$ is equal to $2i\sqrt{b+\lambda}$.

Either sum or difference of the Minkowski distances of each point of hyperbola
$\mathcal{C}_{\lambda}$, $\lambda\in(-\infty,-b)$, from the foci
$F_1$ and $F_2$ is equal to $2\sqrt{a-\lambda}$;
for the other pair of foci $G_1$, $G_2$, it is
equal to $2\sqrt{-b-\lambda}$.

Either sum or difference of the Minkowski distances of each point of hyperbola
$\mathcal{C}_{\lambda}$, $\lambda\in(a,+\infty)$, from the foci
$F_1$ and $F_2$ is equal to $2i\sqrt{\lambda-a}$;
for the other pair of foci $G_1$, $G_2$, it is equal to $2i\sqrt{b+\lambda}$.
\end{proposition}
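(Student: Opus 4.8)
The plan is to prove all three assertions by a single direct computation, using the fact that in the Minkowski plane $\mathbf{E}^{1,1}$ the squared Minkowski distance from $(x,y)$ to $(u,w)$ is $\langle{(x-u,y-w),(x-u,y-w)}\rangle_{1,1}=(x-u)^2-(y-w)^2$, and that after substituting the equation of $\mathcal{C}_\lambda$ this becomes a perfect square (up to sign) in the remaining coordinate. Write $c=\sqrt{a+b}$, so that $F_{1,2}=(\pm c,0)$ and $G_{1,2}=(0,\pm c)$.

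For a point $(x,y)\in\mathcal{C}_\lambda$ I would first compute $\dist_{1,1}((x,y),F_{1,2})^2=(x\mp c)^2-y^2$. Eliminating $y^2$ by means of $\dfrac{x^2}{a-\lambda}+\dfrac{y^2}{b+\lambda}=1$ and using $c^2=a+b$, the right-hand side collapses to
\[
\frac{c^2x^2}{a-\lambda}\mp 2cx+(a-\lambda)=\Bigl(\tfrac{cx}{\sqrt{a-\lambda}}\mp\sqrt{a-\lambda}\Bigr)^2 ,
\]
where the quantity $\sqrt{a-\lambda}$ is real for the ellipse ($\lambda\in(-b,a)$) and for the hyperbola with $x$-axis as major axis ($\lambda<-b$), and purely imaginary for the hyperbola $\lambda>a$. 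Hence the two Minkowski distances to $F_1,F_2$ are $\pm\bigl(\tfrac{cx}{\sqrt{a-\lambda}}-\sqrt{a-\lambda}\bigr)$ and $\pm\bigl(\tfrac{cx}{\sqrt{a-\lambda}}+\sqrt{a-\lambda}\bigr)$, so whichever of their sum and difference cancels the non-constant term equals $\pm 2\sqrt{a-\lambda}$, i.e. $2\sqrt{a-\lambda}$ in the first two cases and $2i\sqrt{\lambda-a}$ in the third. The claim for the foci $G_1,G_2$ follows from the same computation with $x$ and $y$ interchanged: now the Minkowski square reads $x^2-(y\mp c)^2$ and after substitution equals $-\bigl(\tfrac{cy}{\sqrt{b+\lambda}}\mp\sqrt{b+\lambda}\bigr)^2$; the extra minus sign is exactly what produces the factor $i$ in $2i\sqrt{b+\lambda}$ for the ellipse and for the hyperbola $\lambda>a$, whereas for the hyperbola $\lambda<-b$ one has $b+\lambda<0$, the minus sign is absorbed, and the constant becomes the real value $2\sqrt{-b-\lambda}$.

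The only point that needs care — and the reason for the ``either sum or difference'' phrasing — is that each $\dist_{1,1}$ is a square root of a quantity of non-fixed sign, hence determined only up to sign (and a factor $i$). So I would fix the branches along each connected arc of $\mathcal{C}_\lambda$, where the signs of $\tfrac{cx}{\sqrt{a-\lambda}}\mp\sqrt{a-\lambda}$ (resp. of the analogous expressions in $y$) are constant, and observe that the combination of the two distances that kills the term proportional to $x$ (resp. $y$) gives precisely the stated constant, while the other combination gives $\pm\tfrac{2cx}{\sqrt{a-\lambda}}$ (resp. $\pm\tfrac{2cy}{\sqrt{b+\lambda}}$). This matching of branches is the sole, entirely routine, obstacle; there is no substantive difficulty, which is why the statement is elementary.
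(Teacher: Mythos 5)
Your computation is correct and complete: substituting the conic equation into $(x\mp c)^2-y^2$ and $x^2-(y\mp c)^2$ with $c^2=a+b$ does collapse each squared Minkowski distance to $\bigl(\tfrac{cx}{\sqrt{a-\lambda}}\mp\sqrt{a-\lambda}\bigr)^2$ and $-\bigl(\tfrac{cy}{\sqrt{b+\lambda}}\mp\sqrt{b+\lambda}\bigr)^2$ respectively, and the case analysis on the sign of $a-\lambda$ and $b+\lambda$ reproduces exactly the real/imaginary constants in the statement. The paper omits the proof entirely, declaring the proposition elementary, and your argument is precisely the direct verification intended, including the correct handling of the only delicate point (the sign/branch ambiguity of $\dist_{1,1}$, which is what forces the ``either sum or difference'' phrasing).
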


Billiard within an ellipse also have the famous focal property:

\begin{proposition}
Consider a billiard trajectory within ellipse $\mathcal{E}$ given by
equation (\ref{eq:ellipse}) in the Minkowski plane, such that the
line containing the initial segment of the trajectory passes through
a focus of the confocal family (\ref{eq:confocal.conics}), say
$F_1$, $G_1$, or $H_1$.
If the tangent line to $\mathcal{E}$ at the
reflection point of this segment is not light-like, then the line
containing the next segment will pass through $F_2$, $G_2$, or $H_2$
respectively.
\end{proposition}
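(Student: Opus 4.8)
The plan is to localize the statement at the reflection point $P$ and then identify it as the pseudo-Euclidean form of the classical optical property of conics. Fix $P\in\mathcal{E}$ and write $t_P$ for the tangent line of $\mathcal{E}$ at $P$ and $n_P$ for its $\langle\cdot,\cdot\rangle_{k,l}$-normal line at $P$. The hypothesis that $t_P$ is not light-like is exactly what guarantees that $n_P$ is a genuine line, distinct from $t_P$ and itself not light-like, so that the reflection at $P$ is defined and the plane is the direct sum of the directions of $t_P$ and $n_P$. By the definition of billiard reflection recalled in Section~\ref{sec:pseudo} (the splitting $v=a+n_{\alpha}$, $v'=a-n_{\alpha}$, with $a$ in the tangent line and $n_{\alpha}$ normal to it), the line $\ell'$ carrying the outgoing segment is the reflection of the incoming line $\ell$ in the tangent line $t_P$; equivalently, as also noted there, $\ell$, $\ell'$, $t_P$, $n_P$ form a harmonic quadruple in the pencil of lines through $P$. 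Since a line through $F_1$ meeting $\mathcal{E}$ at $P$ is necessarily the line $PF_1$ (as $F_1\notin\mathcal{E}$), it is enough to prove the following local fact about one reflection: for every $P\in\mathcal{E}$, reflection in the line $t_P$ carries the line $PF_1$ onto the line $PF_2$, and likewise with $(F_1,F_2)$ replaced by $(G_1,G_2)$ or $(H_1,H_2)$; then $\ell'=PF_2$ (respectively $PG_2$, $PH_2$), which is the assertion.

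For the pair $(H_1,H_2)$ this is immediate: the two light-like lines through $P$ are $PH_1$ and $PH_2$, and reflection in any non-light-like line through $P$ interchanges the two null directions of the Minkowski plane; hence a light-like initial segment aimed at $H_1$ reflects off $\mathcal{E}$ into the light-like line through $P$ aimed at $H_2$. For $(F_1,F_2)$ and $(G_1,G_2)$ the plan is to differentiate the focal identities of the Proposition stated just above. Since $P$ lies on the ellipse $\mathcal{E}=\mathcal{C}_0$, we have $\dist_{k,l}(P,F_1)\pm\dist_{k,l}(P,F_2)=2\sqrt{a}$ and $\dist_{k,l}(P,G_1)\pm\dist_{k,l}(P,G_2)=2i\sqrt{b}$ with signs that are locally constant as $P$ varies on $\mathcal{E}$. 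Differentiating such an identity along $\mathcal{E}$ at $P$, and using that the differential of $\dist_{k,l}(\,\cdot\,,F)$ at $P$ is (up to the sign and possible factor $i$ fixed by the chosen branch) the $\langle\cdot,\cdot\rangle_{k,l}$-pairing of a displacement with the normalized direction from $F$ to $P$, one finds that the two unit focal directions from $P$ have equal $\langle\cdot,\cdot\rangle_{k,l}$-square and differ, up to sign, by a vector $\langle\cdot,\cdot\rangle_{k,l}$-orthogonal to $t_P$, i.e.\ by a vector along $n_P$. Decomposing each focal direction into its $t_P$- and $n_P$-components and using that $t_P$, $n_P$ are orthogonal and non-isotropic, this forces reflection in $t_P$ to send the line $PF_1$ onto the line $PF_2$ (and $PG_1$ onto $PG_2$); in other words $t_P$ and $n_P$ are the Minkowski bisectors of each focal line-pair. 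The only degenerate occurrence, $P$ on the axis $F_1F_2$ (respectively $G_1G_2$), is harmless, since then $PF_1=PF_2$ is that axis and $\ell'=\ell$ still passes through $F_2$. Alternatively this whole step can be done as a direct, if opaque, computation of the cross-ratio of the four concurrent lines $PF_1$, $PF_2$, $t_P$, $n_P$ from the explicit equation of $\mathcal{C}_0$.

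The step that genuinely needs care is this differentiation argument in indefinite signature. The $G$-distances are purely imaginary, so $\dist_{k,l}(\,\cdot\,,G_i)$ must be treated as a square root of a negative quantity, with a renormalization by real unit vectors (of $\langle\cdot,\cdot\rangle_{k,l}$-square $-1$) after which the factors $i$ cancel. The ``sum or difference'' dichotomy means the sign in each identity is only locally constant, but the desired conclusion is an algebraic identity among the coordinates of $P$ once set up, so it propagates over all of $\mathcal{E}$ by continuity. Finally, the differential of the focal distance fails to exist exactly where $P$ is at light-like $\langle\cdot,\cdot\rangle_{k,l}$-distance from a focus, i.e.\ where $P$ lies on one of the null common tangents $x\pm y=\pm\sqrt{a+b}$; but such a $P$ is the contact point of that tangent with $\mathcal{E}$, where $t_P$ is precisely that light-like line --- the case ruled out by hypothesis. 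The same remark shows that whenever $t_P$ is not light-like the incoming line aimed at $H_1$ is a proper chord. Hence on the full admissible locus the argument above goes through without obstruction, proving the Proposition.
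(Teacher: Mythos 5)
The paper states this proposition without any proof at all (it is introduced as ``the famous focal property'' and left to the reader), so there is no argument of the authors' to compare yours against; what can be said is that your proof is correct and complete. Your treatment of the pair $(H_1,H_2)$ is the right short argument: for light-like $v=a+n_{\alpha}$ with $t_P$ non-isotropic both components are nonzero, $v'=a-n_{\alpha}$ is again null and not proportional to $v$, so the two isotropic directions are swapped. For $(F_1,F_2)$ and $(G_1,G_2)$ you run the classical optical-property argument in signature $(1,1)$: differentiating $r_1\pm r_2=\mathrm{const}$ along $\mathcal{E}$ gives $\langle u_1\pm u_2,\xi\rangle_{1,1}=0$ for tangent $\xi$, hence $u_1\pm u_2$ lies on $n_P$, while $\langle u_1+u_2,u_1-u_2\rangle_{1,1}=\langle u_1,u_1\rangle_{1,1}-\langle u_2,u_2\rangle_{1,1}=0$ forces the complementary combination onto $t_P$ (this is where the non-light-like hypothesis is used, so that $t_P\oplus n_P$ is a genuine orthogonal splitting); reflection then sends $u_1$ to $\pm u_2$, i.e.\ $PF_1$ to $PF_2$. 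You also correctly dispose of the two genuine degeneracies: collinearity of $P$ with the focal pair (where $\ell'=\ell$ is the axis), and vanishing of a focal distance, which occurs exactly at the contact points of the four light-like common tangents and is excluded by hypothesis. The only cosmetic caveat is that the ``sum or difference'' sign and the factor $i$ for the $G$-pair should be fixed on each arc between consecutive excluded points before differentiating, which you do acknowledge; with that, the argument is sound on the whole admissible locus.
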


In other words, the segments of one billiard trajectory will
alternately contain foci of one of the pairs $(F_1,F_2)$,
$(G_1,G_2)$, $(H_1,H_2)$. The only exception are successive segments
obtained by the reflection on the light-like tangent. Such segments
coincide.

\subsection{Light-like trajectories of the elliptical billiard}
\label{sec:light-like}
In this section, we are going to study light-like trajectories of
elliptical billiard in the Minkowski plane. An example of such a
billiard trajectory is shown on Figure \ref{fig:traj}.

\begin{figure}[h]
\begin{pspicture}(-3.5,-2)(3.5,2)
  \SpecialCoor
  \psellipse[unit=2cm,linewidth=0.8pt](! 2 sqrt 1)

    \psline[unit=2cm,linewidth=0.6pt](-1.40006,0.14112) (-0.654847,0.886334) (0.963496,-0.732009) (1.29718,-0.398328)
            (-0.098711,0.997561) (-1.36299,-0.266713) (-0.809946,-0.819752) (0.823021,0.813215)
            (1.35863,0.277609) (0.0827298,-0.998287) (-1.30347,0.387916) (-0.951712,0.739677)
            (0.668999,-0.881034) (1.39771,-0.152321) (0.262809,0.982581)
            (-1.22251,-0.502733)
\end{pspicture}
\caption{Light-like billiard trajectory.}\label{fig:traj}
\end{figure}
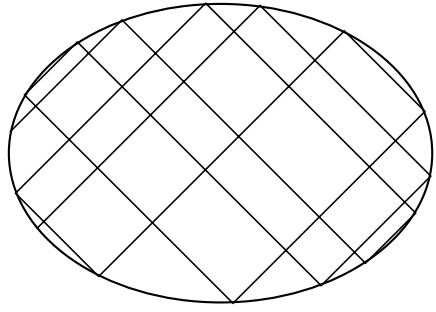

Successive segments of such trajectories are orthogonal to each
other. Notice that this implies that a trajectory can close only
after even number of reflections.

\subsubsection*{Periodic light-like trajectories}
The analytic condition for $n$-periodicity of light-like billiard trajectory within the ellipse $\mathcal{E}$ given by equation (\ref{eq:ellipse}) can be written down applying the more general Cayley's condition for closedness of a polygonal line inscribed in one conic and circumscribed about another one \cites{Cayley1853,Cayley1854}, see also \cites{LebCONIQUES,GrifHar1978}:
\begin{equation}\label{eq:cayley_plane}
\det\left(
\begin{array}{llll}
B_{3} & B_{4} & \dots & B_{m+1}\\
B_{4} & B_{5} & \dots & B_{m+2}\\
\dots & \dots & \dots & \dots\\
B_{m+1} & B_{m+2} & \dots & B_{2m-1}
\end{array}
\right) = 0,\quad\text{with}\ n=2m.
\end{equation}
Here,
$$
\sqrt{(a-\lambda)(b+\lambda)}
 =
B_0 + B_1\lambda + B_2\lambda^2 + \dots
$$
is the Taylor expansion around $\lambda=0$.

Now, we are going to derive analytic condition for periodic light-like trajectories in another way, which will lead to a more compact form of (\ref{eq:cayley_plane}).

\begin{theorem}\label{th:arctan}
Light-like billiard trajectory within ellipse $\mathcal{E}$ is
periodic with period $n$, where $n$ is an even integer if and only
if
\begin{equation}\label{eq:cayley_arctan}
\arctan\sqrt{\frac ab}\in\left\{\ \frac{k\pi}n\ \left|\ 1\le k<\frac
n2,\ \left(k,\frac{n}{2}\right)=1\right.\ \right\}.
\end{equation}
\end{theorem}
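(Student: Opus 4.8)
The plan is to parametrize a light-like trajectory by the sequence of reflection points on $\mathcal{E}$ and to exploit the fact, noted just before the statement, that consecutive segments are mutually orthogonal (in the Minkowski sense). Concretely, I would use the standard angular parameter $\varphi$ on the ellipse, $(x,y)=(\sqrt{a}\cos\varphi,\sqrt{b}\sin\varphi)$, so that the billiard map becomes a circle rotation. The key computation is: if a light-like chord leaves the point with parameter $\varphi$ and hits the point with parameter $\varphi'$, then the increment $\varphi'-\varphi$ is constant, independent of the starting point. To see this, note a light-like direction in the Minkowski plane is one of the two fixed slopes $\pm 1$ (the vector $v$ satisfies $v_1^2-v_2^2=0$), so all segments of the trajectory are parallel to one of the two lines $x\pm y=\text{const}$, alternating between the two families after each reflection (reflection off the tangent sends slope $+1$ to slope $-1$, since the two light-like directions are orthogonal to each other and the reflection swaps them). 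Thus I would show directly that a chord of slope $+1$ from $\varphi$ meets $\mathcal{E}$ again at the parameter $\varphi'$ with $\varphi+\varphi'$ equal to a fixed value, and similarly a chord of slope $-1$ from $\varphi'$ returns to $\varphi''$ with $\varphi'-\varphi''$ fixed; composing, $\varphi\mapsto\varphi''$ is a rigid rotation by some angle $2\theta$.

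Next I would identify the rotation angle $\theta$ explicitly. Substituting $y=x-c$ into $x^2/a+y^2/b=1$ and using $\cos,\sin$ of the two root-parameters, a short calculation with the sum-to-product identities should give that the relevant half-angle satisfies $\tan\theta=\sqrt{a/b}$, i.e.\ $\theta=\arctan\sqrt{a/b}$. (This is the step where I expect to actually have to grind a little: one must track which of the two intersection points of a slope-$\pm1$ line with the ellipse is ``next'' along the trajectory, and check that the bookkeeping of signs produces precisely $2\arctan\sqrt{a/b}$ and not its complement; the alternative angle $\pi - 2\arctan\sqrt{a/b}$ differs only by orientation and does not change the periodicity condition, so this is a cosmetic rather than an essential difficulty.) Once the billiard map on the $\varphi$-circle is the rotation by $2\arctan\sqrt{a/b}$, the trajectory consisting of $n$ segments corresponds to $n/2$ full applications of the two-step return map, hence closes up after exactly $n$ reflections iff $\tfrac n2\cdot 2\arctan\sqrt{a/b}\equiv 0 \pmod{2\pi}$ and no smaller multiple does, i.e.\ iff $\arctan\sqrt{a/b}=k\pi/n$ with $(k,n/2)=1$; restricting to the fundamental range gives $1\le k<n/2$, which is the stated condition \eqref{eq:cayley_arctan}.

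Two points need care. First, one must handle the exceptional reflections mentioned in Section~\ref{sec:pseudo}: when the light-like chord hits a point of $\mathcal{E}$ whose tangent line is itself light-like (the four points where $\mathcal{E}$ meets the distinguished common tangents $x\pm y=\pm\sqrt{a+b}$), the reflected segment coincides with the incoming one; following the convention recalled there, such an event is counted as two reflections, and one checks that this is consistent with the rotation picture (the parameter simply stays put for one ``virtual'' step and the rotation count is unaffected). Second, I should verify that light-like chords of $\mathcal{E}$ always exist and always re-meet $\mathcal{E}$, which follows since lines of slope $\pm1$ through an interior point are transversal to the ellipse; equivalently, by the discussion after \eqref{eq:polinom.P}, light-like lines are exactly those tangent to $\mathcal{Q}_\infty$, so they are genuine caustic-type trajectories of the confocal family and the billiard dynamics is well defined on them. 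With these checks in place, the equivalence of the two forms \eqref{eq:cayley_plane} and \eqref{eq:cayley_arctan} of the periodicity condition also follows, since both characterize the same rotation number; the main obstacle is purely the explicit slope computation yielding $\arctan\sqrt{a/b}$, everything else being formal.
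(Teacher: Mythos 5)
Your approach is essentially the paper's: the eccentric-angle parametrization $(\sqrt a\cos\varphi,\sqrt b\sin\varphi)$ is exactly the affine change of variables $(x,y)\mapsto(x\sqrt b,y\sqrt a)$ the authors use to turn $\mathcal E$ into a circle, after which the light-like chords become chords in two fixed directions making the angle $2\arctan\sqrt{a/b}$ with each other, and the two-step billiard map is a rigid rotation. So the strategy is sound and matches the paper, and your remarks about the exceptional reflections at the four light-like tangency points and about existence of the chords are exactly the right things to check.

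One concrete bookkeeping error, though: your chain of equalities does not hang together as written. Each single step of the dynamics in $\varphi$ is a reflection $\varphi\mapsto 2\theta_u+\pi-\varphi$ about the diameter determined by the chord direction $\theta_u$, so the two-step map is the composition of two such reflections, i.e.\ rotation by \emph{twice} the angle between the two directions: $2\cdot 2\arctan\sqrt{a/b}=4\arctan\sqrt{a/b}$, not the $2\arctan\sqrt{a/b}$ you assert. (Sanity check with $a=b$: the light-like trajectory in a circle is an inscribed rectangle, $4$-periodic, and indeed the two-step rotation is $4\cdot\tfrac\pi4=\pi$, not $\tfrac\pi2$.) You then commit a compensating error in the last step: from $\tfrac n2\cdot 2\arctan\sqrt{a/b}\equiv 0\pmod{2\pi}$ one gets $\arctan\sqrt{a/b}=2k\pi/n$, not $k\pi/n$. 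The two factor-of-$2$ slips cancel, so you land on the correct condition \eqref{eq:cayley_arctan}, but only by accident; with the correct rotation angle $4\arctan\sqrt{a/b}$ the computation $\tfrac n2\cdot 4\arctan\sqrt{a/b}\equiv 0\pmod{2\pi}$ gives $\arctan\sqrt{a/b}=k\pi/n$ honestly, and coprimality of $k$ with $n/2$ is the minimality of the period of the two-step rotation. This is a fixable slip, not a flaw in the method, but it is not the ``cosmetic'' ambiguity between $2\arctan\sqrt{a/b}$ and its supplement that you flagged.
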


\begin{proof}
Applying the following affine transformation:
$$
(x,y)\mapsto(x\sqrt{b},y\sqrt{a}),
$$
the ellipse is transformed into a circle.
The light-like lines are transformed into lines parallel to two directions, with the angle between them equal to $2\arctan\sqrt{a/b}$.
Since the dynamics on the boundary is the rotation by this angle, the proof is complete.
\end{proof}

As an immediate consequence, we get
\begin{corollary}\label{cor:cayley.euler}
For a given even integer $n$, the number of different ratios of the
axes of ellipses having $n$-periodic light-like billiard
trajectories is equal to:
$$
\begin{cases}
\varphi(n)/2 & \text{if}\ \ n\ \text{is not divisible by}\ 4,
 \\
\varphi(n)/4 & \text{if}\ \ n\ \text{is divisible by}\ 4.
\end{cases}
$$
$\varphi$ is the Euler's totient function, i.e.\ the number of
positive integers not exceeding $n$ that are relatively prime to
$n$.
\end{corollary}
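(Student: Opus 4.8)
The plan is to reduce the statement, via Theorem~\ref{th:arctan}, to an elementary count of residues, the only subtlety being that an ellipse must be identified with the one obtained by interchanging the roles of its two axes. By Theorem~\ref{th:arctan}, the ellipse $\mathcal{E}$ of~(\ref{eq:ellipse}) admits an $n$-periodic light-like trajectory exactly when $\arctan\sqrt{a/b}=k\pi/n$ for some integer $k$ with $1\le k<n/2$ and $(k,n/2)=1$. The map $t\mapsto\arctan\sqrt{t}$ is a strictly increasing bijection of $(0,\infty)$ onto $(0,\pi/2)$, and the numbers $k\pi/n$ with $1\le k<n/2$ are pairwise distinct and lie in $(0,\pi/2)$; hence $k\mapsto a/b=\tan^{2}(k\pi/n)$ is injective, and, writing $m=n/2$, the admissible \emph{ordered} ratios $a/b$ correspond bijectively to $S_{m}=\{\,k\ :\ 1\le k<m,\ (k,m)=1\,\}$, a set of cardinality $\varphi(m)$ for $m\ge2$.

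Next I would pass from ordered to unordered ratios. The ellipse with axis ratio $a/b$ is congruent to the one with ratio $b/a$: the linear map $(x,y)\mapsto(y,x)$ carries one onto the other, fixes each light-like direction, and, being an anti-isometry of the Minkowski form, preserves orthogonality and therefore conjugates the corresponding light-like billiard flows; so the two ellipses are to be counted once. On $S_{m}$ this identification is exactly the involution $k\mapsto m-k$, since $(m-k,m)=(k,m)$ and $\tan^{2}((m-k)\pi/n)=\cot^{2}(k\pi/n)=1/\tan^{2}(k\pi/n)$. A fixed point of this involution would satisfy $2k=m$ together with $(k,m)=1$, which forces $m=2$; hence for every even $n\ge6$ the involution acts freely on $S_{m}$ and the number of distinct axis ratios equals $\tfrac12\varphi(n/2)$.

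It then remains to rewrite $\tfrac12\varphi(n/2)$ in terms of $\varphi(n)$. If $4\nmid n$ then $n/2$ is odd, so $\varphi(n)=\varphi(2)\varphi(n/2)=\varphi(n/2)$ and the count is $\varphi(n)/2$. If $4\mid n$, write $n=2^{a}t$ with $t$ odd and $a\ge2$; then $\varphi(n)=2^{a-1}\varphi(t)$ while $\varphi(n/2)=\varphi(2^{a-1}t)=2^{a-2}\varphi(t)=\varphi(n)/2$, so the count is $\varphi(n)/4$. None of these steps is a real obstacle; the only point that genuinely needs care is the reciprocal identification, together with the observation that its unique fixed point --- which arises only for $n=4$ and corresponds to the circle $a=b$ --- lies outside the range of validity of the displayed formula.
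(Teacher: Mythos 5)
Your proof is correct and follows exactly the route the paper intends: Corollary~\ref{cor:cayley.euler} is stated there as an immediate consequence of Theorem~\ref{th:arctan}, and your argument (count the admissible $k$ with $(k,n/2)=1$, quotient by the reciprocal involution $k\mapsto n/2-k$ coming from the anti-isometry $(x,y)\mapsto(y,x)$, then rewrite $\tfrac12\varphi(n/2)$ in terms of $\varphi(n)$) is precisely the omitted verification. Your observation that the involution has a fixed point only for $n=4$, where the displayed formula gives $\varphi(4)/4=1/2$ rather than the true count $1$, is a genuine and worthwhile refinement of the statement as printed.
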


\begin{remark}\label{rem:nk}
There are four points on $\mathcal{E}$ where the tangents are
light-like. Those points cut four arcs on $\mathcal{E}$. An
$n$-periodic trajectory within $\mathcal{E}$ hits each one of a pair
of opposite arcs exactly $k$ times, and $\dfrac{n}2-k$ times the
arcs from the other pair.
\end{remark}

\subsubsection*{Light-like trajectories in ellipses and rectangular billiards}

\begin{proposition}\label{prop:rectangle}
The flow of light-like billiard trajectories within ellipse
$\mathcal{E}$ is trajectorially equivalent to the flow of those
billiard trajectories within a rectangle whose angle with the sides
is $\dfrac{\pi}4$. The ratio of the sides of the rectangle is equal
to:
$$
\frac{\pi}{2\arctan\sqrt{\dfrac{a}{b}}}-1.
$$
\end{proposition}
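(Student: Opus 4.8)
The plan is to build explicitly the trajectorial equivalence, using the description of light-like dynamics already obtained in Theorem~\ref{th:arctan} and Remark~\ref{rem:nk}. First I would record the structure of a light-like trajectory: by Remark~\ref{remark:type} all segments stay light-like, by the observation preceding the periodicity discussion successive segments are orthogonal, hence there are only two segment directions, say the two light-like directions $x+y=\const$ and $x-y=\const$, and the four light-like tangent points of $\mathcal{E}$ cut $\mathcal{E}$ into four arcs grouped into two opposite pairs. A trajectory alternately hits an arc of one pair and an arc of the other. After the affine change $(x,y)\mapsto(x\sqrt b,y\sqrt a)$ from the proof of Theorem~\ref{th:arctan}, $\mathcal{E}$ becomes a circle, the two light-like families become two families of chords making a fixed angle, and the boundary map is the rigid rotation through $2\arctan\sqrt{a/b}$. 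So on the circle the dynamics is just rotation, and the natural coordinate is the arc-length (angle) parameter $\theta\in\mathbf{R}/2\pi\mathbf{Z}$.

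Next I would turn this rotation into a rectangular billiard by the classical unfolding. Let $\omega=2\arctan\sqrt{a/b}$; the four light-like tangent points of $\mathcal{E}$ correspond on the circle to the four angles $\theta_0,\theta_0+\omega/2\cdot(\text{something})$; more precisely the light-like tangent directions are symmetric, and the four tangent points split the circle into arcs of angular lengths $\omega$ and $\pi-\omega$ alternately (this is where Remark~\ref{rem:nk}'s "$k$ times versus $n/2-k$ times" comes from: in $n/2$ full steps of size $\omega$ one crosses each pair of arcs the stated number of times). Rescale so that one pair of arcs has length proportional to $\omega$ and the complementary pair to $\pi-\omega$; unfolding a billiard in a rectangle with sides $s_1,s_2$ and with trajectories at angle $\pi/4$, the phase point's position is governed by two independent "saw-tooth" coordinates, one of period $2s_1$ and one of period $2s_2$, and in the combined picture the motion along the boundary advances by a fixed amount each time, alternately reflecting off the two pairs of sides. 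Matching: declare one side-pair of the rectangle to correspond to the short arcs (total contribution $\propto\omega$) and the other to the long arcs ($\propto\pi-\omega$). I would choose the normalization so the "short" direction has length $\omega$ and the full circle has length $\pi$ (half the circle, since the rotation by $\omega$ on $2\pi$ is, after identifying antipodal behaviour dictated by the alternation, a rotation on a segment of length $\pi$); then the long side has length $\pi-\omega$, and the ratio of sides is $(\pi-\omega)/\omega=\pi/\omega-1=\dfrac{\pi}{2\arctan\sqrt{a/b}}-1$, which is exactly the claimed value. Constructing the explicit conjugating homeomorphism between the two phase spaces — sending each light-like billiard orbit to a $45^\circ$ rectangular orbit so that the time-ordering of bounces is respected — is the heart of the argument.

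The main obstacle I expect is the bookkeeping of the unfolding: one must check that the alternation "hit a short arc, then a long arc, then a short arc, \dots" in the ellipse matches exactly the alternation "reflect off the pair of sides of length $s_1$, then off the pair of length $s_2$" in the $45^\circ$ rectangular billiard, including the behaviour at the corners and at the light-like tangent points (where, as noted after Remark~\ref{remark:type}, a reflection off a light-like tangent is to be counted as two reflections and the segment coincides — this should correspond to hitting a corner of the rectangle). One must also verify that lengths, not merely combinatorics, are conjugated: the rotation number $\omega/\pi$ of the circle map must equal the rotation number of the rectangular billiard with side ratio $(\pi-\omega)/\omega$ at angle $\pi/4$, which is a short computation with the saw-tooth coordinates. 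Once that identification of rotation numbers and of the corner structure is in place, trajectorial equivalence follows, since both systems are, on each invariant curve, rigid rotations with the same rotation number, and the explicit affine map of Theorem~\ref{th:arctan} together with the unfolding map provides the conjugacy.
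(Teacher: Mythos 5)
Your overall route is the same as the paper's: reduce to the circle by the affine map of Theorem \ref{th:arctan}, use the four light-like tangent points and the counting of Remark \ref{rem:nk}, and match the resulting data with a $45^{\circ}$ billiard in a rectangle of side ratio $(\pi-\omega)/\omega$, $\omega=2\arctan\sqrt{a/b}$. (The paper's own proof is just this, stated in two lines: the periodic case from Theorem \ref{th:arctan} and Remark \ref{rem:nk}, and the irrational case by density of rationals; you instead aim at a direct conjugacy via rotation numbers, which is a legitimate and in fact more uniform way to finish.) Your computation of the arc lengths is correct: the four tangent points sit at angles $\pm\pi/2\pm\omega/2$ on the circle, so the arcs have lengths $\omega,\pi-\omega,\omega,\pi-\omega$, and matching the frequency of visits to the short arcs, $\omega/\pi$, with the frequency $s_2/(s_1+s_2)$ of reflections off one pair of sides of the rectangle gives exactly $s_1/s_2=\pi/\omega-1$.

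There is, however, one concretely false step: the claim that ``a trajectory alternately hits an arc of one pair and an arc of the other,'' repeated later as the alternation ``short arc, long arc, short arc, \dots'' to be matched with an alternation between the two pairs of sides. This contradicts Remark \ref{rem:nk}, which you yourself cite: an $n$-periodic trajectory hits the $\omega$-pair $2k$ times and the $(\pi-\omega)$-pair $n-2k$ times, and these are equal only when $\omega=\pi/2$, i.e.\ $a=b$. For $\omega$ small the orbit visits the long arcs many times between consecutive visits to a short arc (just as a $45^{\circ}$ trajectory in a long thin rectangle bounces repeatedly between the long sides), so the visiting pattern is a Sturmian-type sequence, not an alternation, and the verification you propose would fail as stated. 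The repair is exactly the last part of your own plan: do not match alternations, but match the marked rotations --- the second iterate of the elliptic boundary map is a rigid rotation by $2\omega$ of a circle carrying a $4$-arc partition with lengths $\omega,\pi-\omega,\omega,\pi-\omega$, and the unfolded $45^{\circ}$ rectangular billiard is the analogous rotation with partition lengths $s_1,s_2,s_1,s_2$; rescaling arc lengths conjugates one to the other precisely when $s_1:s_2=(\pi-\omega):\omega$, and the corner/light-like-tangent identification you describe handles the degenerate reflections. With the alternation claim deleted and that conjugacy actually written down, your argument is complete and recovers the paper's statement without the separate density argument.
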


\begin{proof}
For the ellipses with periodic light-like trajectories, the theorem
follows from Theorem \ref{th:arctan} and Remark \ref{rem:nk}.

In other cases, the number
 $\dfrac{\pi}{2\arctan\sqrt{\dfrac{a}{b}}}-1$
is not rational. For them, the statement holds because of the
density of rational numbers.
\end{proof}

\begin{remark}
The flow of light-light billiard trajectories within a given oval in the Minkowski plane will be trajectorially equivalent to the flow of certain trajectories within a rectangle whenever invariant measure $m$ on the oval exists such that $m(AB)=m(CD)$ and $m(BC)=m(AD)$, where $A$, $B$, $C$, $D$ are points on the oval where the tangents are light-like.
\end{remark}

%\subsection{Topological properties of elliptical billiards in the Minkowski plane}
%\label{sec:topology}
%\input{3-plane/topology}

\section{Relativistic quadrics}
\label{sec:relativistic}
In this section, we are going to introduce \emph{relativistic quadrics}, as the main new object 
of the present paper, which is going to become a main tool in our further study of billiard 
dynamics.
The point is that geometric quadrics of a confocal pencil and their types in pseudo-Euclidean spaces do not satisfy usual properties of confocal quadrics in Euclidean spaces, including those necessary for applications in billiard dynamics.
For example, we have already mentioned, in the $d$-dimensional Euclidean space, there are $d$ geometric types of quadrics, while in $d$-dimensional pseudo-Euclidean space, there are $d+1$ geometric types of quadrics.
Thus, we first select those important properties of confocal families in the Euclidean spaces and axiomatize them as E1--E5 in Section \ref{sec:euclid}.
Then, in Section \ref{sec:rel2} we consider the two-dimensional case, the Minkowski plane, and we study appropriate  relativistic conics, where \cite{BirkM1962} may be seen as a historic origin of ideas of relativistic conics.
In Section \ref{sec:geom3} we study  geometrical types of quadrics in a confocal family in the three-dimensional Minkowski space.
Next, in Section \ref{sec:tropic3}, we analyze tropic curves on quadrics in the three-dimensional case and we introduce an important notion of discriminant sets $\Sigma^{\pm}$ corresponding to a confocal family.
The main facts about discriminant sets we prove in Propositions \ref{prop:tropic.surface}, \ref{prop:tropic.light}, \ref{prop:tetra.hyp}, \ref{prop:hyp.y.presek}.
Then, we study curved tetrahedra $\mathcal{T}^{\pm}$, which represent singularity sets of $\Sigma^{\pm}$ and we collect related results in Proposition \ref{prop:tetra}.
As the next important step, we introduce decorated Jacobi coordinates in Section \ref{sec:rel3} for three-dimensional Minkowski space, and we give a detailed description of the colouring into three colours. 
Each colour corresponds to a relativistic type, and we describe decomposition of a geometric quadric of each of the four  geometric types into relativistic quadrics. 
This appears to be a rather involved combinatorial-geometric problem, and we solve it by using previous analysis of discriminant surfaces. 
We give a complete description of all three relativistic types of quadrics.
In Section \ref{sec:reld}, we generalize definition of decorated Jacobi coordinates in arbitrary dimensions, and, finally,
in Proposition \ref{prop:svojstva12} we prove properties PE1 and PE2, the pseudo-Euclidean analogues of E1 and E2.

\subsection{Confocal quadrics and their types in the Euclidean space}
\label{sec:euclid}
A general family of confocal quadrics in the $d$-dimensional Euclidean space is given by:
\begin{equation}\label{eq:conf.Euclid}
\frac{x_1^2}{b_1-\lambda}+\dots+\frac{x_d^2}{b_d-\lambda}=1,\quad\lambda\in\mathbf{R}
\end{equation}
with $b_1>b_2>\dots>b_d>0$.

Such a family has the following properties:
\begin{itemize}
\item[E1]
each point of the space $\mathbf{E}^d$ is the intersection of exactly $d$ quadrics from (\ref{eq:conf.Euclid});
moreover, all these quadrics are of different geometrical types;

\item[E2]
family (\ref{eq:conf.Euclid}) contains exactly $d$ geometrical types of non-degenerate quadrics 
-- each type corresponds to one of the disjoint intervals of the parameter $\lambda$:
$(-\infty,b_d)$, $(b_d,b_{d-1})$, \dots, $(b_2,b_1)$.
\end{itemize}

The parameters $(\lambda_1,\dots,\lambda_d)$ corresponding to the quadrics of (\ref{eq:conf.Euclid}) that contain a given point in $\mathbf{E}^d$ are called \emph{Jacobi coordinates}.
We order them $\lambda_1>\dots>\lambda_d$.

Now, let us consider the motion of a billiard ball within an ellipsoid,
denote it by $\mathcal{E}$,
of the family (\ref{eq:conf.Euclid}).
Without losing generality, take that the parameter $\lambda$ corresponding to this ellipsoid be equal to $0$.
Recall that, by Chasles' theorem, each line in $\mathbf{E}^d$ is touching some $d-1$ quadrics from (\ref{eq:conf.Euclid}).
Moreover, for a line and its billiard reflection on a quadric from (\ref{eq:conf.Euclid}), the $d-1$ quadrics are the same.
This means that each segment of a given trajectory within $\mathcal{E}$ has the same $d-1$ \emph{caustics} --
denote their parameters by $\beta_1$, \dots, $\beta_{d-1}$, and introduce the following:
$$
\{\bar{b}_1,\dots,\bar{b}_{2d}\}=\{b_1,\dots,b_d,0,\beta_1,\dots,\beta_{d-1}\},
$$
such that $\bar{b}_1\ge\bar{b}_2\ge\dots\ge\bar{b}_{2d}$.
In this way, we will have $0=\bar{b}_{2d}<\bar{b}_{2d-1}$, $b_1=\bar{b}_1>\bar{b}_1$.
Moreover, it is always: $\beta_i\in\{\bar{b}_{2i},\bar{b}_{2i+1}\}$, for each $i\in\{1,\dots,d\}$, see \cite{Audin1994}.

Now, we can summarize the main properties of the flow of the Jacobi coordinates along the billiard trajectories:
\begin{itemize}
 \item[E3]
along a fixed billiard trajectory, the Jacobi coordinate $\lambda_i$ ($1\le i\le d$) takes values in segment 
$[\bar{b}_{2i-1},\bar{b}_{2i}]$;

 \item[E4]
along a trajectory, each $\lambda_i$ achieves local minima and maxima exactly at touching points with corresponding caustics, intersection points with corresponding coordinate hyper-planes, and, for $i=d$, at reflection points;

 \item[E5]
values of $\lambda_i$ at those points are $\bar{b}_{2i-1}$, $\bar{b}_{2i}$;
between the critical points, $\lambda_i$ is changed monotonously.
\end{itemize}

Those properties represent the key in the algebro-geometrical analysis of the billiard flow.

At the first glance, it seems that fine properties like those do not take place in the pseudo-Euclidean case.
In a $d$-dimensional pseudo-Euclidean space, a general confocal family contains $d+1$ geometrical types of quadrics and, in addition, quadrics of the same geometrical type have non-empty intersection.
Because of that, it looks much more complicated to analyze the billiard flow following the Jacobi-type coordinates.

In the rest of this section, we are going to overcome this important problem, by introducing a new notion of relativistic quadrics.
In our setting, we equip a geometric pencil of quadrics by an additional structure, \emph{a decoration}, which decomposes geometric quadrics of the pencil into coloured subsets which form new types of relativistic quadrics.
The new notion of relativistic quadrics, i.e.\ \emph{coloured geometric quadrics}, is more suitable for the pseudo-Euclidean geometry.
In return, we will obtain a possibility to introduce a new system of coordinates, nontrivial pseudo-Euclidean analogue of Jacobi elliptic coordinates, which is going to play a fundamental role in the sequel, as a powerfull tool in the study of separable systems.

\subsection{Confocal conics in the Minkowski plane}
\label{sec:rel2}
Let us consider first the case of the $2$-dimensional pseudo-Euclidean space $\mathbf{E}^{1,1}$, namely the Minkowski plane.
We mentioned in Section \ref{sec:confocal.conics} that a family of confocal conics in the Minkowski plane contains three geometrical types of conics: ellipses, hyperbolas with $x$-axis as the major one, and hyperbolas with $y$-axis as the major one, as shown on Figure \ref{fig:confocal.conics}.
However, it is more natural to consider \emph{relativistic conics}, which are analysed in \cite{BirkM1962}.
In this section, we give a brief account of that analysis.

Consider points $F_1(\sqrt{a+b},0)$ and $F_2(-\sqrt{a+b},0)$ in the plane.

For a given constant $c\in\mathbf{R}^{+}\cup i\mathbf{R}^{+}$, \emph{a relativistic ellipse} is the set of points $X$ satisfying:
$$
\dist_{1,1}(F_1,X)+\dist_{1,1}(F_2,X)=2c,
$$
while \emph{a relativistic hyperbola} is the union of the sets given by the following equations:
\begin{gather*}
\dist_{1,1}(F_1,X)-\dist_{1,1}(F_2,X)=2c,\\ 
\dist_{1,1}(F_2,X)-\dist_{1,1}(F_1,X)=2c.
\end{gather*}

Relativistic conics can be described as follows.
\begin{description}
 \item[$0<c<\sqrt{a+b}$]
The corresponding relativistic conics lie on ellipse $\mathcal{C}_{a-c^2}$ from family (\ref{eq:confocal.conics}).
The ellipse $\mathcal{C}_{a-c^2}$ is split into four arcs by touching points with the four common tangent lines; thus, the relativistic ellipse is the union of the two arcs intersecting the $y$-axis, while the relativistic hyperbola is the union of the other two arcs.

\item[$c>\sqrt{a+b}$]
The relativistic conics lie on $\mathcal{C}_{a-c^2}$ -- a hyperbola with $x$-axis as the major one.
Each branch of the hyperbola is split into three arcs by touching points with the common tangents; thus, the relativistic ellipse is the union of the two finite arcs, while the relativistic hyperbola is the union of the four infinite ones.

\item[$c$ is imaginary]
The relativistic conics lie on hyperbola $\mathcal{C}_{a-c^2}$ -- a hyperbola with $y$-axis as the major one.
As in the previous case, the branches are split into six arcs in total by common points with the four tangents.
The relativistic ellipse is the union of the four infinite arcs, while the relativistic hyperbola is the union of the two finite ones. 
\end{description}

The conics are shown on Figure \ref{fig:relativistic.conics}.
\begin{figure}[h]
\begin{pspicture}(-4,-4)(4,4)
  \SpecialCoor

\parametricplot{30}{150}
	{1.5 sqrt t cos mul
	 0.5 sqrt t sin mul}
\parametricplot{30}{150}
	{1.5 sqrt t cos mul
	 0.5 sqrt t sin mul neg}
\parametricplot[linestyle=dashed,dash=3pt 2pt]{-30}{30}
	{1.5 sqrt t cos mul
	 0.5 sqrt t sin mul}
\parametricplot[linestyle=dashed,dash=3pt 2pt]{-30}{30}
	{1.5 sqrt t cos mul neg
	 0.5 sqrt t sin mul}

\parametricplot{45}{135}
	{t cos
	 t sin}
\parametricplot{45}{135}
	{t cos
	 t sin neg}
\parametricplot[linestyle=dashed,dash=3pt 2pt]{-45}{45}
	{t cos
	 t sin}
\parametricplot[linestyle=dashed,dash=3pt 2pt]{-45}{45}
	{t cos neg
	 t sin}

\parametricplot{7 sqrt 1 atan}{180 7 sqrt 1 atan sub}
	{0.5 t cos mul
	 7 sqrt 2 div t sin mul}
\parametricplot{7 sqrt 1 atan}{180 7 sqrt 1 atan sub}
	{0.5 t cos mul
	 7 sqrt 2 div t sin mul neg}
\parametricplot[linestyle=dashed,dash=3pt 2pt]{7 sqrt 1 atan neg}{7 sqrt 1 atan}
	{0.5 t cos mul
	 7 sqrt 2 div t sin mul}
\parametricplot[linestyle=dashed,dash=3pt 2pt]{7 sqrt 1 atan neg}{7 sqrt 1 atan}
	{0.5 t cos mul neg
	 7 sqrt 2 div t sin mul}

  \psline[linewidth=0.35pt](! 3.5 2 sqrt 3.5 sub)(! 2 sqrt 3.5 sub 3.5)
  \psline[linewidth=0.35pt](! -3.5 3.5 2 sqrt sub)(! 3.5 2 sqrt sub -3.5)
  \psline[linewidth=0.35pt](! 3.5 3.5 2 sqrt sub)(! -3.5 2 sqrt add -3.5)
  \psline[linewidth=0.35pt](! 3.5 2 sqrt sub 3.5)(! -3.5 2 sqrt -3.5 add)

\psplot{-2}{-3 2 sqrt mul 4 div}
	{1 x x mul 1 2.5 sub div sub 1 2.5 add mul sqrt}
\psplot[linestyle=dashed,dash=3pt 2pt]{-3 2 sqrt mul 4 div}{3 2 sqrt mul 4 div}
	{1 x x mul 1 2.5 sub div sub 1 2.5 add mul sqrt}
\psplot{3 2 sqrt mul 4 div}{2}
	{1 x x mul 1 2.5 sub div sub 1 2.5 add mul sqrt}

\psplot{-2}{-3 2 sqrt mul 4 div}
	{1 x x mul 1 2.5 sub div sub 1 2.5 add mul sqrt neg}
\psplot[linestyle=dashed,dash=3pt 2pt]{-3 2 sqrt mul 4 div}{3 2 sqrt mul 4 div}
	{1 x x mul 1 2.5 sub div sub 1 2.5 add mul sqrt neg}
\psplot{3 2 sqrt mul 4 div}{2}
	{1 x x mul 1 2.5 sub div sub 1 2.5 add mul sqrt neg}

\psplot{-1.5}{2 sqrt 4 div neg}
	{1 x x mul 1 1.5 sub div sub 1 1.5 add mul sqrt}
\psplot[linestyle=dashed,dash=3pt 2pt]{2 sqrt 4 div neg}{2 sqrt 4 div}
	{1 x x mul 1 1.5 sub div sub 1 1.5 add mul sqrt}
\psplot{1.5}{2 sqrt 4 div}
	{1 x x mul 1 1.5 sub div sub 1 1.5 add mul sqrt}

\psplot{-1.5}{2 sqrt 4 div neg}
	{1 x x mul 1 1.5 sub div sub 1 1.5 add mul sqrt neg}
\psplot[linestyle=dashed,dash=3pt 2pt]{2 sqrt 4 div neg}{2 sqrt 4 div}
	{1 x x mul 1 1.5 sub div sub 1 1.5 add mul sqrt neg}
\psplot{1.5}{2 sqrt 4 div}
	{1 x x mul 1 1.5 sub div sub 1 1.5 add mul sqrt neg}

\parametricplot[linestyle=dashed,dash=3pt 2pt]{-2}{-3 2 sqrt mul 4 div}
   {
    1 t t mul 1 2.5 sub div sub 1 2.5 add mul sqrt
    t
   }
\parametricplot{-3 2 sqrt mul 4 div}{3 2 sqrt mul 4 div}
   {
    1 t t mul 1 2.5 sub div sub 1 2.5 add mul sqrt
    t
   }
\parametricplot[linestyle=dashed,dash=3pt 2pt]{2}{3 2 sqrt mul 4 div}
   {
    1 t t mul 1 2.5 sub div sub 1 2.5 add mul sqrt
    t
   }

\parametricplot[linestyle=dashed,dash=3pt 2pt]{-2}{-3 2 sqrt mul 4 div}
   {
    1 t t mul 1 2.5 sub div sub 1 2.5 add mul sqrt neg
    t
   }
\parametricplot{-3 2 sqrt mul 4 div}{3 2 sqrt mul 4 div}
   {
    1 t t mul 1 2.5 sub div sub 1 2.5 add mul sqrt neg
    t
   }
\parametricplot[linestyle=dashed,dash=3pt 2pt]{2}{3 2 sqrt mul 4 div}
   {
    1 t t mul 1 2.5 sub div sub 1 2.5 add mul sqrt neg
    t
   }

\parametricplot[linestyle=dashed,dash=3pt 2pt]{-1.5}{2 sqrt 4 div neg}
   {
   1 t t mul 1 1.6 sub div sub 1 1.6 add mul sqrt
   t
   }
\parametricplot{2 sqrt 4 div}{2 sqrt 4 div neg}
   {
   1 t t mul 1 1.6 sub div sub 1 1.6 add mul sqrt
   t
   }
\parametricplot[linestyle=dashed,dash=3pt 2pt]{1.5}{2 sqrt 4 div}
   {
   1 t t mul 1 1.6 sub div sub 1 1.6 add mul sqrt
   t
   }

\parametricplot[linestyle=dashed,dash=3pt 2pt]{-1.5}{2 sqrt 4 div neg}
   {
   1 t t mul 1 1.6 sub div sub 1 1.6 add mul sqrt neg
   t
   }
\parametricplot{2 sqrt 4 div}{2 sqrt 4 div neg}
   {
   1 t t mul 1 1.6 sub div sub 1 1.6 add mul sqrt neg
   t
   }
\parametricplot[linestyle=dashed,dash=3pt 2pt]{1.5}{2 sqrt 4 div}
   {
   1 t t mul 1 1.6 sub div sub 1 1.6 add mul sqrt neg
   t
   }

\end{pspicture}
\caption{Relativistic conics in the Minkowski plane: relativistic ellipses are represented by full lines, and hyperbolas by dashed ones.}\label{fig:relativistic.conics}
\end{figure}
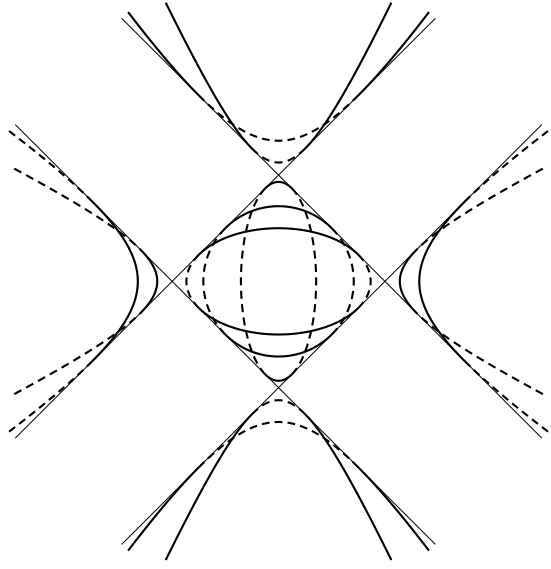

\begin{remark}
All relativistic ellipses are disjoint with each other, as well as all relativistic hyperbolas.
Moreover, at the intersection point of a relativistic ellipse which is a part of the geometric conic $\mathcal{C}_{\lambda_1}$ from the confocal family (\ref{eq:confocal.conics}) and a relativistic hyperbola belonging to $\mathcal{C}_{\lambda_2}$, it is always $\lambda_1<\lambda_2$.
\end{remark}

This remark will serve as a motivation for introducing relativistic types of quadrics in higher-dimensional pseudo-Euclidean spaces.

\subsection{Confocal quadrics in the three-dimensional Minkowski space and their geometrical types}
\label{sec:geom3}
Let us start with the three-dimensional Minkowski space $\mathbf{E}^{2,1}$.
A general confocal family of quadrics in this space is given by:
\begin{equation}\label{eq:confocal.quadrics3}
\mathcal{Q}_{\lambda}\ :\ \frac{x^2}{a-\lambda}+\frac{y^2}{b-\lambda}+\frac{z^2}{c+\lambda}=1,
\quad
\lambda\in\mathbf{R},
\end{equation}
with $a>b>0$, $c>0$.

The family (\ref{eq:confocal.quadrics3}) contains four geometrical types of quadrics:

\begin{itemize}
\item
$1$-sheeted hyperboloids oriented along $z$-axis, for $\lambda\in(-\infty,-c)$;

\item
ellipsoids, corresponding to $\lambda\in(-c,b)$;

\item
$1$-sheeted hyperboloids oriented along $y$-axis, for $\lambda\in(b,a)$;

\item
$2$-sheeted hyperboloids, for $\lambda\in(a,+\infty)$ -- these hyperboloids are oriented along $z$-axis.
\end{itemize}
In addition, there are four degenerated quadrics: $\mathcal{Q}_{a}$, $\mathcal{Q}_{b}$, $\mathcal{Q}_{-c}$, $\mathcal{Q}_{\infty}$, that is planes $x=0$, $y=0$, $z=0$, and the plane at the infinity respectively.
In the coordinate planes, we single out the following conics: 
\begin{itemize}
\item
hyperbola $\mathcal{C}^{yz}_{a}\ :\ -\dfrac{y^2}{a-b}+\dfrac{z^2}{c+a}=1$ in the plane $x=0$;

\item
ellipse $\mathcal{C}^{xz}_{b}\ :\ \dfrac{x^2}{a-b}+\dfrac{z^2}{c+b}=1$ in the plane $y=0$;

\item
ellipse $\mathcal{C}^{xy}_{-c}\ :\ \dfrac{x^2}{a+c}+\dfrac{y^2}{b+c}=1$ in the plane $z=0$.
\end{itemize}

\subsection{Tropic curves on quadrics in the three-dimensional Minkowski space and discriminant sets $\Sigma^{\pm}$}
\label{sec:tropic3}
On each quadric, notice the \emph{tropic curves} -- the set of points where the induced metrics on the tangent plane is degenerate.

Since the tangent plane at point $(x_0,y_0,z_0)$ of $\mathcal{Q}_{\lambda}$ is given by the equation:
$$
\frac{xx_0}{a-\lambda}+\frac{yy_0}{b-\lambda}+\frac{zz_0}{c+\lambda}=1,
$$
and the induced metric is degenerate if and only if the parallel plane that contains the origin is tangent to the light-like cone $x^2+y^2-z^2=0$, i.e.:
$$
\frac{x_0^2}{(a-\lambda)^2}+\frac{y_0^2}{(b-\lambda)^2}-\frac{z_0^2}{(c+\lambda)^2}=0,
$$
we come to the statement formulated in \cite{KhTab2009}:

\begin{proposition}\label{prop:cone.tropic}
The tropic curves on $\mathcal{Q}_{\lambda}$ is the intersection of the quadric with the cone:
\begin{equation*}
\frac{x^2}{(a-\lambda)^2}+\frac{y^2}{(b-\lambda)^2}-\frac{z^2}{(c+\lambda)^2}=0.
\end{equation*}
\end{proposition}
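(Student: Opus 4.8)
The plan is to simply unwind the definition of the tropic curve and translate the degeneracy of the induced metric into an algebraic condition on the point, using the tangency criterion of a plane to the light-like cone that was already recorded in Section~\ref{sec:pseudo}.

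First I would recall the linear-algebra fact used just above the statement: for a $2$-plane $\Pi_0$ through the origin in $\mathbf{E}^{2,1}$, the restriction of $\langle\cdot,\cdot\rangle_{2,1}$ to $\Pi_0$ is degenerate if and only if $\Pi_0$ contains a nonzero vector orthogonal to all of $\Pi_0$; since any vector orthogonal to $\Pi_0$ is collinear with $E_{2,1}n$, where $n$ is a Euclidean normal of $\Pi_0$, this happens exactly when $E_{2,1}n\in\Pi_0$, i.e.\ when $n\circ (E_{2,1}n)=\langle n,n\rangle_{2,1}=0$, which is precisely the condition that $\Pi_0$ be tangent to the cone $x^2+y^2-z^2=0$.

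Next I would apply this with $\Pi_0$ the linear plane parallel to the tangent plane of $\mathcal{Q}_{\lambda}$ at $p=(x_0,y_0,z_0)\in\mathcal{Q}_{\lambda}$. The tangent plane is $A_{\lambda}p\circ x=1$, so a Euclidean normal of it, hence of $\Pi_0$, is $n=A_{\lambda}p$; here the induced metric on the affine tangent plane coincides with the restriction of $\langle\cdot,\cdot\rangle_{2,1}$ to $\Pi_0$, because the Minkowski form is translation-invariant. Therefore the metric is degenerate at $p$ if and only if $\langle A_{\lambda}p,A_{\lambda}p\rangle_{2,1}=0$, which written out is $\dfrac{x_0^2}{(a-\lambda)^2}+\dfrac{y_0^2}{(b-\lambda)^2}-\dfrac{z_0^2}{(c+\lambda)^2}=0$. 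Intersecting this locus with the constraint $p\in\mathcal{Q}_{\lambda}$ gives exactly the asserted description of the tropic curve.

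The computation is short; the only points that need care are (i) checking that the induced metric on the affine tangent plane is the same bilinear form as the restriction of $\langle\cdot,\cdot\rangle_{2,1}$ to the parallel linear plane (immediate from translation-invariance), and (ii) the borderline case in which $n=A_{\lambda}p$ is itself light-like, so that the normal line lies inside $\Pi_0$; one should note that the criterion $\langle n,n\rangle_{2,1}=0$ still correctly detects degeneracy there, so no separate argument is required. I do not expect a genuine obstacle beyond bookkeeping.
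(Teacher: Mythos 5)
Your argument is correct and follows essentially the same route as the paper: the paper likewise writes down the tangent plane $A_{\lambda}p\circ x=1$, translates it to the origin, and identifies degeneracy of the induced metric with tangency of that plane to the light-like cone, which immediately yields the displayed equation. The only difference is that you spell out why degeneracy is equivalent to $\langle n,n\rangle_{2,1}=0$ for the Euclidean normal $n=A_{\lambda}p$ (via the Minkowski-orthogonal complement being spanned by $E_{2,1}n$), a step the paper takes as known from its earlier discussion of orthogonality to hyper-planes.
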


Now, consider the set of the tropic curves on all quadrics of the family (\ref{eq:confocal.quadrics3}).
From Proposition \ref{prop:cone.tropic}, we get:

\begin{proposition}\label{prop:tropic.surface}
The union of the tropic curves on all quadrics of (\ref{eq:confocal.quadrics3}) is a union of two ruled surfaces $\Sigma^+$ and $\Sigma^-$ which can be parametrically represented as:
\begin{align*}
&\Sigma^+\ :\quad
x = \frac{a-\lambda}{\sqrt{a+c}}\cos t,\quad
y = \frac{b-\lambda}{\sqrt{b+c}}\sin t,\quad
z = (c+\lambda)\sqrt{\frac{\cos^2t}{a+c}+\frac{\sin^2t}{b+c}},\\
&\Sigma^-\ :\quad
x = \frac{a-\lambda}{\sqrt{a+c}}\cos t,\quad
y = \frac{b-\lambda}{\sqrt{b+c}}\sin t,\quad
z = -(c+\lambda)\sqrt{\frac{\cos^2t}{a+c}+\frac{\sin^2t}{b+c}},\\
&\text{with}\ \lambda\in\mathbf{R},\quad t\in[0,2\pi).
\end{align*}
The intersection of these two surfaces is an ellipse in the $xy$-plane:
$$
\Sigma^+\cap\Sigma^-\ : \ \frac{x^2}{a+c}+\frac{y^2}{b+c}=1,\ z=0.
$$
The two surfaces $\Sigma^+$, $\Sigma^-$ are developable as embedded into Euclidean space.
Moreover, their generatrices are all light-like.
\end{proposition}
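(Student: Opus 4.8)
The plan is to start from Proposition~\ref{prop:cone.tropic}, which says that the tropic curve on $\mathcal{Q}_\lambda$ is the intersection of that quadric with the cone
$$
\frac{x^2}{(a-\lambda)^2}+\frac{y^2}{(b-\lambda)^2}-\frac{z^2}{(c+\lambda)^2}=0.
$$
First I would parametrize the rays of this cone: any point on the cone can be written with a direction angle $t$ via $x = \rho(a-\lambda)\cos t$, $y = \rho(b-\lambda)\sin t$ for some scaling $\rho$, and then the cone equation forces $z^2 = \rho^2(c+\lambda)^2(\cos^2 t + \sin^2 t) \cdot(\dots)$; imposing that the point also lie on $\mathcal{Q}_\lambda$ determines $\rho$. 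Substituting $x,y$ into the equation of $\mathcal{Q}_\lambda$ gives $\rho^2(a-\lambda)\cos^2 t + \rho^2(b-\lambda)\sin^2 t + z^2/(c+\lambda) = 1$, and combining with the cone relation $z^2/(c+\lambda)^2 = \rho^2[(a-\lambda)\cos^2 t/(a-\lambda) \cdot \dots]$ — a short computation — yields $\rho^2 = 1/(a+c)$ for the $\cos t$-coefficient and $1/(b+c)$ for the $\sin t$-coefficient after matching; this produces exactly the stated formulas for $x$, $y$, and then $z = \pm(c+\lambda)\sqrt{\cos^2 t/(a+c)+\sin^2 t/(b+c)}$. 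Defining $\Sigma^+$ and $\Sigma^-$ by the two sign choices, every point of every tropic curve lies on one of them, and conversely every point of the parametrization lies on $\mathcal{Q}_\lambda$'s tropic curve, so the union of tropic curves is exactly $\Sigma^+\cup\Sigma^-$.

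Next I would compute the intersection $\Sigma^+\cap\Sigma^-$: since the $x,y$ coordinates agree for both sheets and the $z$ coordinates are negatives of each other, a common point forces $z=0$, hence $c+\lambda=0$ (as the square-root factor is strictly positive), i.e.\ $\lambda=-c$; plugging $\lambda=-c$ into the $x,y$ formulas gives $x=\sqrt{a+c}\cos t$ wait — more precisely $x=(a+c)/\sqrt{a+c}\cdot\cos t=\sqrt{a+c}\cos t$ and $y=\sqrt{b+c}\sin t$, which traces out the ellipse $x^2/(a+c)+y^2/(b+c)=1$ in the plane $z=0$, as claimed.

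For developability and the light-like generatrices, I would fix $t$ and vary $\lambda$: the curve $\lambda\mapsto(x(\lambda,t),y(\lambda,t),z(\lambda,t))$ is visibly affine in $\lambda$ (each coordinate is linear in $\lambda$), so each surface is ruled by straight lines, one for each $t$. The direction vector of the line through fixed $t$ is
$$
w(t)=\left(-\frac{\cos t}{\sqrt{a+c}},\ -\frac{\sin t}{\sqrt{b+c}},\ \pm\sqrt{\frac{\cos^2 t}{a+c}+\frac{\sin^2 t}{b+c}}\right),
$$
and one checks immediately that $\langle w,w\rangle_{2,1}=w_1^2+w_2^2-w_3^2=0$, so every generatrix is light-like. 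Developability (zero Gaussian curvature, equivalently: the tangent plane is constant along each generatrix in the Euclidean metric) then follows because a ruled surface whose generators are obtained this way is developable precisely when the Euclidean normal is constant along each ruling; I would verify this by showing the tangent vector $\partial_t(x,y,z)$ at $\lambda$ and the fixed ruling direction $w(t)$ span a plane whose Euclidean normal does not depend on $\lambda$ — concretely, computing the cross product $\partial_t(\cdot)\times w(t)$ and checking its direction is $\lambda$-independent after normalization, or equivalently that the three vectors $w$, $\partial_t(\text{point at }\lambda_1)-\partial_t(\text{point at }\lambda_2)$ are coplanar, which reduces to $\det[\,\partial_\lambda(\text{point}),\ \partial_t(\text{point}),\ w\,]=0$; since $\partial_\lambda(\text{point})=w$, this determinant has two equal columns and vanishes identically.

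The main obstacle I expect is the developability verification: "developable as embedded into Euclidean space" is a statement about the induced Euclidean geometry, not the pseudo-Euclidean one, so one must be careful that the natural parametrization above is the one making this transparent. The cleanest route is the determinant/Jacobian-rank argument just sketched — the striction-line computation for a ruled surface $X(\lambda,t)=p(t)+\lambda\, w(t)$ shows it is developable iff $\det[p',w,w']=0$; here, because the $\lambda$-dependence enters so symmetrically, $p'(t)$ and $w'(t)$ turn out to be proportional modulo $w(t)$, giving the vanishing determinant after a short but slightly fiddly trigonometric simplification. Everything else — the parametrization, the intersection ellipse, the light-like character of the generatrices — is a direct substitution and sign check.
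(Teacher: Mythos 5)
Your route is essentially the paper's: obtain the parametrization by intersecting $\mathcal{Q}_\lambda$ with the cone of Proposition \ref{prop:cone.tropic}, note that all three coordinates are affine in $\lambda$ so each surface is ruled by the lines $t=\mathrm{const}$, check $\langle w,w\rangle_{2,1}=0$ for the ruling direction, and deduce developability from a ruled-surface curvature criterion. The paper's printed proof records only the last step: with $\mathbf{n}\propto\mathbf{r}_\lambda\times\mathbf{r}_t$ it computes $L=\mathbf{r}_{\lambda\lambda}\cdot\mathbf{n}=0$, $M=\mathbf{r}_{\lambda t}\cdot\mathbf{n}=0$ and $EG-F^2\not\equiv0$, whence $K=0$. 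Your derivation of the parametrization (substituting $x=(a-\lambda)u$, $y=(b-\lambda)v$ into the cone and into the quadric to reach $(a+c)u^2+(b+c)v^2=1$) and the light-like verification are correct and make explicit what the paper leaves implicit.

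Two steps need repair. First, the clause ``since $\partial_\lambda(\mathrm{point})=w$, this determinant has two equal columns and vanishes identically'' proves nothing: $\det[\mathbf{r}_\lambda,\mathbf{r}_t,w]=0$ holds for \emph{every} ruled surface and is not the developability criterion. The criterion you correctly name elsewhere, $\det[q',w,w']=0$ (equivalent to the paper's $M=\mathbf{r}_{\lambda t}\cdot\mathbf{n}=0$), is a genuine identity that must actually be verified --- it is the one non-trivial computation in the whole proposition. Writing $\mathbf{r}=q(t)+(c+\lambda)w(t)$ with directrix $q(t)=(\sqrt{a+c}\cos t,\ \sqrt{b+c}\sin t,\ 0)$ and $g(t)=\sqrt{\cos^2t/(a+c)+\sin^2t/(b+c)}$, the determinant $\det[q',w,w']$ reduces, using $2gg'=2\sin t\cos t\bigl(\tfrac{1}{b+c}-\tfrac{1}{a+c}\bigr)$, to a multiple of $\bigl((a+c)\sin^2t+(b+c)\cos^2t\bigr)/\bigl(g(a+c)(b+c)\bigr)-g$, which vanishes because the first term equals $g^2/g$; this computation (or the paper's $M=0$) cannot be waved away. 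Second, your intersection argument tacitly assumes that a common point of $\Sigma^+$ and $\Sigma^-$ carries the same parameters $(\lambda,t)$ on both sheets. Since $z=(c+\lambda)g$ changes sign with $c+\lambda$, each of $\Sigma^{\pm}$ contains points with $z$ of either sign, so a priori a point with $z\neq0$ could lie on the $\Sigma^+$-branch of one tropic curve and the $\Sigma^-$-branch of another. To close this, invoke Lemma \ref{lemma:tropic.double}: a point lies on the tropic curve of $\mathcal{Q}_{\lambda_0}$ iff $\lambda_0$ is a multiple root of the cubic (\ref{eq:jednacina}), and a cubic has at most one multiple root; hence the two $\lambda$'s coincide, and then $z=0$ and $\lambda=-c$ follow exactly as you say.
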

\begin{proof}
Denote by $\mathbf{r}=(x,y,z)$ an arbitrary point of $\Sigma^{+}\cup\Sigma^{-}$, and by $\mathbf{n}$ the corresponding unit normal vector, 
$\mathbf{n}={\mathbf{r}_{\lambda}\times\mathbf{r}_t}/{|\mathbf{r}_{\lambda}\times\mathbf{r}_t|}$.
Here, by $\times$ we denoted the vector product in the three-dimensional Euclidean space.
Then, the Gaussian curvature of the surface is $K=(LN-M^2)/(EG-F^2)$, with
$L=\mathbf{r}_{\lambda\lambda}\cdot\mathbf{n}=0$,
$M=\mathbf{r}_{\lambda t}\cdot\mathbf{n}=0$,
$N=\mathbf{r}_{tt}\cdot\mathbf{n}$,
$E=\mathbf{r}_{\lambda}\cdot\mathbf{r}_{\lambda}$,
$F=\mathbf{r}_{\lambda}\cdot\mathbf{r}_{t}$,
$G=\mathbf{r}_{t}\cdot\mathbf{r}_{t}$.
Since
$$
EG-F^2=\frac{\left(a+b-2\lambda+(b-a)\cos(2t)\right)^2}{2(a+c)(b+c)}\not\equiv0,
$$
the Gaussian curvature $K$ is equal to zero.
\end{proof}
Surfaces $\Sigma^+$ and $\Sigma^-$ from Proposition \ref{prop:tropic.surface} are represented on Figure \ref{fig:tropic.surface}.
\begin{figure}[h]
\includegraphics[width=5.6cm, height=7.5cm]{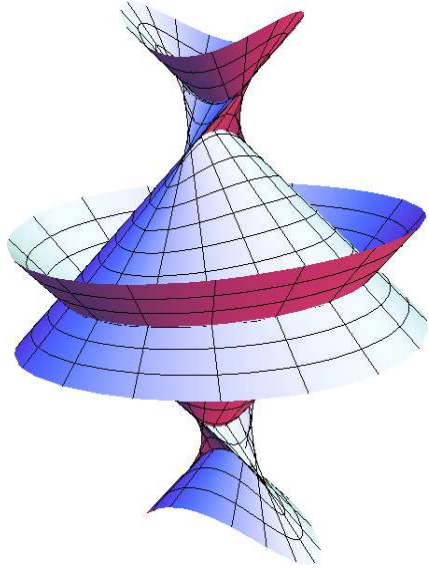}
\caption{The union of all tropic curves of a confocal family.}\label{fig:tropic.surface}
\end{figure}

In \cite{Pei1999}, a definition of generalization of Gauss map to surfaces in the three-dimensional Minkowski space is suggested.
Namely, the \emph{pseudo vector product} introduced as:
$$
\mathbf{x}\wedge\mathbf{y}=
\left|\begin{array}{rrr}
x_1 & x_2 & x_3\\
y_1 & y_2 & y_3\\
e_1 & e_2 & -e_3
\end{array}\right|=(x_2y_3-x_3y_2,\ x_3y_1-x_1y_3,\ -(x_1y_2-x_2y_1))=E_{2,1}(\mathbf{x}\times\mathbf{y}).
$$
It is easy to check that
$\left<\mathbf{x}\wedge\mathbf{y},\mathbf{x}\right>_{2,1}=\left<\mathbf{x}\wedge\mathbf{y},\mathbf{y}\right>_{2,1}=0$.

Then, for surface $S:U\to\mathbf{E}^{2,1}$, with $U\subset\mathbf{R}^2$,
\emph{the Minkowski Gauss map} is defined as:
$$
\mathcal{G}\ :\ U\to\mathbf{RP}^2,
\quad
\mathcal{G}(x_1,x_2)= \mathbf{P}\left(\frac{\partial S}{\partial x_1}\wedge\frac{\partial S}{\partial x_2}\right),
$$
where $\mathbf{P}:\mathbf{R}^3\setminus\{(0,0,0)\}\to\mathbf{RP}^2$ is the usual projectivization.

\begin{lemma}\label{lema:tropic.s.ll}
The Minkowski Gauss map of surfaces $\Sigma^{\pm}$ is singular at all points.
\end{lemma}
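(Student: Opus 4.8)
The plan is to exploit the fact, already established in Proposition~\ref{prop:tropic.surface}, that the generatrices of $\Sigma^{\pm}$ are light-like. The Minkowski Gauss map $\mathcal{G}$ sends a point to the projectivization of $S_{x_1}\wedge S_{x_2}$, and by the identity $\langle \mathbf{x}\wedge\mathbf{y},\mathbf{x}\rangle_{2,1}=\langle \mathbf{x}\wedge\mathbf{y},\mathbf{y}\rangle_{2,1}=0$ recorded just before the lemma, the vector $\mathbf{N}:=S_{x_1}\wedge S_{x_2}$ is $\langle\cdot,\cdot\rangle_{2,1}$-orthogonal to the whole tangent plane of $\Sigma^{\pm}$. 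Since one of the tangent directions is light-like, and a light-like vector in $\mathbf{E}^{2,1}$ lies in its own orthogonal complement, the normal $\mathbf{N}$ must itself be a null vector; in fact $\mathbf{N}$ is (up to scalar) $E_{2,1}$ applied to the light-like generatrix direction, which is again light-like, so $\langle\mathbf{N},\mathbf{N}\rangle_{2,1}=0$ at every point. The key structural observation is then that $\mathbf{N}$, as a vector, depends only on the generatrix and not on the position along it: because $L=\mathbf{r}_{\lambda\lambda}\cdot\mathbf{n}=0$ and $M=\mathbf{r}_{\lambda t}\cdot\mathbf{n}=0$ in the proof of Proposition~\ref{prop:tropic.surface}, the Euclidean unit normal $\mathbf{n}$ is constant along each $\lambda$-line, hence so is $\mathbf{N}$ up to rescaling, hence $\mathcal{G}$ is constant along each generatrix.

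First I would fix the parametrization $(x_1,x_2)=(\lambda,t)$ from Proposition~\ref{prop:tropic.surface} and compute $\mathbf{N}=S_\lambda\wedge S_t = E_{2,1}(S_\lambda\times S_t)$. I expect the Euclidean cross product $S_\lambda\times S_t$ to be a scalar multiple of the Euclidean unit normal $\mathbf{n}$, which was shown to be independent of $\lambda$; so $\mathbf{N}(\lambda,t)=\mu(\lambda,t)\,\mathbf{N}_0(t)$ for a nowhere-zero scalar function $\mu$ and a fixed direction field $\mathbf{N}_0(t)$. Projectivizing kills $\mu$, so $\mathcal{G}(\lambda,t)=\mathbf{P}(\mathbf{N}_0(t))$ depends on $t$ alone. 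Therefore the differential $d\mathcal{G}$ annihilates $\partial/\partial\lambda$ at every point, i.e.\ $\mathcal{G}$ has rank at most $1$ everywhere, which is exactly the statement that $\mathcal{G}$ is singular (not an immersion) at all points.

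The one place requiring genuine care — and the main obstacle — is handling the locus where the two surfaces meet (the ellipse $\frac{x^2}{a+c}+\frac{y^2}{b+c}=1$, $z=0$) and, more generally, the points where $EG-F^2$ or the Euclidean normal degenerate or where the generatrix and the $t$-direction become tangent: there the parametrization $(\lambda,t)$ may fail to be regular, and one must check that $\mathcal{G}$ still extends and is still singular there. I would treat these by continuity, using that $EG-F^2=\dfrac{\bigl(a+b-2\lambda+(b-a)\cos(2t)\bigr)^2}{2(a+c)(b+c)}$ vanishes only on the vanishing of $a+b-2\lambda+(b-a)\cos 2t$, and noting that on the complement $\mathcal{G}$ is constant along $\lambda$-lines; the constancy then passes to the closure. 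An alternative, cleaner route that avoids case analysis: show directly that $S_\lambda$ is light-like and that $\langle S_\lambda, S_\lambda\rangle_{2,1}=0$ forces $S_\lambda\perp_{k,l}$ itself, hence $S_\lambda$ is parallel to $\mathbf{N}=S_\lambda\wedge S_t$ whenever the latter is nonzero (both being null and $\langle\mathbf{N},S_\lambda\rangle_{2,1}=0$ in a Lorentzian plane forces collinearity); consequently $\partial\mathcal{G}/\partial\lambda$ is proportional to $\mathbf{P}_*(S_\lambda)$ evaluated at $\mathbf{N}$, which vanishes because a point maps to itself under projectivization in the direction of itself. Either way the conclusion is that $\operatorname{rank} d\mathcal{G}\le 1$ identically, so $\mathcal{G}$ is singular at all points of $\Sigma^{\pm}$.
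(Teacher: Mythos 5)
Your main argument is correct, but it follows a genuinely different route from the paper's. The paper's entire proof is the single observation that $\mathbf{r}_{\lambda}\wedge\mathbf{r}_t$ is light-like for all $(\lambda,t)$: the image of $\mathcal{G}$ is then contained in the projectivized light cone, a one-dimensional conic in $\mathbf{RP}^2$, so $d\mathcal{G}$ has rank at most one everywhere and nothing more needs to be checked. You instead prove that $\mathcal{G}$ is constant along each generatrix, using the developability ($L=M=0$) established in Proposition \ref{prop:tropic.surface}; this also yields $\rank d\mathcal{G}\le 1$, and is valid wherever $\mathbf{r}_\lambda$ and $\mathbf{r}_t$ are independent, but at the cost of the extra care near the cuspidal edges and the self-intersection locus that you rightly flag. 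Note that you already derive the nullity of $\mathbf{N}$ in your first paragraph --- pushing that one step further (null normal $\Rightarrow$ image lies in a curve $\Rightarrow$ rank $\le 1$) gives the paper's shorter proof and dispenses with the degenerate-locus discussion entirely. Two smaller points: since $\mathbf{N}$ is null and pseudo-orthogonal to the null vector $\mathbf{r}_\lambda$, it is proportional to $\mathbf{r}_\lambda$ itself, not to $E_{2,1}$ applied to it; and the ``alternative, cleaner route'' in your final paragraph does not close as stated, because $\mathbf{N}\parallel S_\lambda$ says nothing about $\partial\mathbf{N}/\partial\lambda$, and it is only $\partial\mathbf{N}/\partial\lambda\parallel\mathbf{N}$ that would be annihilated by $\mathbf{P}_*$.
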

\begin{proof}
This follows from the fact that $\mathbf{r}_{\lambda}\wedge\mathbf{r}_t$ is light-like for all $\lambda$ and $t$.
\end{proof}

%From Lemma \ref{lema:tropic.s.ll}, it follows by \cite{CI2010} that surfaces $\Sigma^{\pm}$ are developable as embedded in the Euclidean space, which we already proved directly in Proposition \ref{prop:tropic.surface}.
%Moreover, 
Since \emph{the pseudo-normal vectors} to $\Sigma^{\pm}$ are all light-like, these surfaces are \emph{light-like developable}, as defined in \cite{CI2010}.
There, a classification of such surfaces is given -- each is one part-by-part contained in the following:
\begin{itemize}
 \item
a light-like plane;
 \item
a light-like cone;
 \item
a tangent surface of a light-like curve.
\end{itemize}
Since $\Sigma^{+}$ and $\Sigma^{-}$ are contained neither in a plane nor in a cone, we expect that they will be tangent surfaces of some light-like curve, which is going to be shown in the sequel, see Corollary \ref{cor:tangent.surface} later in this section. 

On each of the surfaces $\Sigma^{+}$, $\Sigma^{-}$, we can notice that tropic lines corresponding to $1$-sheeted hyperboloids oriented along $y$-axies form one curved tetrahedron, see Figure \ref{fig:tropic.surface}.
Denote the tetrahedra by $\mathcal{T}^{+}$ and $\mathcal{T}^{-}$ respectively: they are symmetric with respect to the $xy$-plane.
On Figure \ref{fig:tetra}, tetrahedron $\mathcal{T}^{+}\subset\Sigma^{+}$ is shown.
\begin{figure}[h]
\includegraphics[width=6cm, height=6cm]{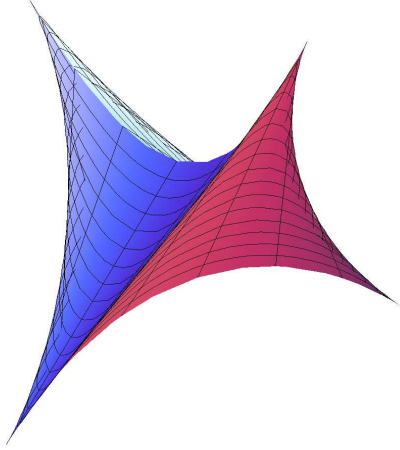}
\caption{Curved thetrahedron $\mathcal{T}^{+}$: the union of all tropic curves on $\Sigma^{+}$ corresponding to $\lambda\in(b,a)$.}\label{fig:tetra}
\end{figure}

Let us summarize the properties of these tetrahedra.
\begin{proposition}\label{prop:tetra}
Consider the subset $\mathcal{T}^{+}$ of $\Sigma^{+}$ determined by the condition $\lambda\in[b,a]$.
This set is a curved tetrahedron, with the following properties:
\begin{itemize}
 \item 
its verteces are:
\begin{align*}
&V_1\left(\frac{a-b}{\sqrt{a+c}},\ 0,\ \frac{b+c}{\sqrt{a+c}}\right),\quad
V_2\left(-\frac{a-b}{\sqrt{a+c}},\ 0,\ \frac{b+c}{\sqrt{a+c}}\right),\\
&V_3\left(0,\ \frac{a-b}{\sqrt{b+c}},\ \frac{a+c}{\sqrt{b+c}}\right),\quad
V_4\left(0,\ -\frac{a-b}{\sqrt{b+c}},\ \frac{a+c}{\sqrt{b+c}}\right);
\end{align*}
 \item
the shorter arcs of conics $\mathcal{C}^{xz}_{b}$ and $\mathcal{C}^{yz}_{a}$ determined by $V_1$, $V_2$ and $V_3$, $V_4$ respectively are two edges of the tetrahedron;
 \item
those two edges represent self-intersection of $\Sigma^{+}$;
 \item
other four edges are determined by the relation:
\begin{equation}\label{eq:cusp}
-a-b+2\lambda+(a-b)\cos2t=0,
\end{equation}
 \item
those four edges are cuspidal edges of $\Sigma^{+}$;
 \item
thus, at each vertex of the tetrahedron, a swallowtail singularity of $\Sigma^+$ occurs.
\end{itemize}
\end{proposition}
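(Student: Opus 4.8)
The plan is to deduce everything from the explicit parametrization of $\Sigma^{+}$ given in Proposition \ref{prop:tropic.surface}, restricted to $\lambda\in[b,a]$. Write $\Phi(\lambda,t)$ for it, and observe it is ruled: $\Phi(\lambda,t)=\gamma(t)+\lambda\delta(t)$ with $\gamma(t)=\bigl(\tfrac{a\cos t}{\sqrt{a+c}},\tfrac{b\sin t}{\sqrt{b+c}},c\sqrt{g(t)}\bigr)$, $\delta(t)=\bigl(-\tfrac{\cos t}{\sqrt{a+c}},-\tfrac{\sin t}{\sqrt{b+c}},\sqrt{g(t)}\bigr)$ and $g(t)=\tfrac{\cos^{2}t}{a+c}+\tfrac{\sin^{2}t}{b+c}$; one checks $\langle\delta,\delta\rangle_{2,1}=0$, so the generatrices are light-like, and the proof of Proposition \ref{prop:tropic.surface} already supplies
$$EG-F^{2}=\frac{\bigl(a+b-2\lambda+(b-a)\cos 2t\bigr)^{2}}{2(a+c)(b+c)},$$
which is the one computation that drives the whole argument.

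I would first dispose of the two edges carried by coordinate-plane conics, together with the four vertices. Setting $\lambda=b$ annihilates the $y$-coordinate, and substituting shows $\tfrac{x^{2}}{a-b}+\tfrac{z^{2}}{c+b}=1$, so $\Phi(b,\cdot)$ lies on $\mathcal{C}^{xz}_{b}$; since $z=(c+b)\sqrt{g(t)}$ sweeps the interval $\bigl[\tfrac{c+b}{\sqrt{a+c}},\sqrt{c+b}\bigr]$, this trace is precisely the shorter, upper arc of that ellipse between $\Phi(b,0)=V_{1}$ and $\Phi(b,\pi)=V_{2}$. Symmetrically $\lambda=a$ annihilates $x$ and yields the shorter, bounded, upper-branch arc of the hyperbola $\mathcal{C}^{yz}_{a}$ between $\Phi(a,\tfrac{3\pi}{2})=V_{3}$ and $\Phi(a,\tfrac{\pi}{2})=V_{4}$, which fixes the four vertices. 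For the self-intersection along these two arcs I would use that $z$ depends on $t$ only through $\cos 2t$ while $x\propto\cos t$ and $y\propto\sin t$: along $\lambda=b$ one has $\Phi(b,t)=\Phi(b,2\pi-t)$, and the two local sheets meeting there carry the generatrices $\delta(t)$ and $\delta(2\pi-t)$, which differ only in the sign of the $y$-component and hence span distinct tangent planes whenever $\sin t\neq 0$; thus $\Sigma^{+}$ crosses itself transversally along this arc, and identically along the $\lambda=a$ arc.

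For the four remaining edges I would first determine the singular set of $\Phi$: by the displayed identity $D\Phi$ drops rank exactly where $a+b-2\lambda+(b-a)\cos 2t=0$, i.e.\ on (\ref{eq:cusp}); solving gives $\lambda=\tfrac{a+b}{2}-\tfrac{a-b}{2}\cos 2t\in[b,a]$, so the whole singular set of $\Sigma^{+}$ already lies inside $\mathcal{T}^{+}$, it is a closed curve meeting the vertices at $t=0,\tfrac{\pi}{2},\pi,\tfrac{3\pi}{2}$, and there it splits into four arcs, each joining one of $V_{1},V_{2}$ to one of $V_{3},V_{4}$. Being developable (Proposition \ref{prop:tropic.surface}), $\Sigma^{+}$ is the image of a front, whose only generic singularities are cuspidal edges and isolated swallowtails (\cite{ArnoldSing}); so it remains to verify the normal-form conditions distinguishing the strata. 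Away from the vertices the null direction $\eta=\ker D\Phi$ is transverse to the singular curve and $D^{2}\Phi(\eta,\eta)$ is not tangent to $\Sigma^{+}$ — both immediate from $\Phi=\gamma+\lambda\delta$ — which yields cuspidal edges; at each vertex this transversality fails, equivalently the edge of regression $\mathcal{C}(t)=\Phi\bigl(\tfrac{a+b}{2}-\tfrac{a-b}{2}\cos 2t,\,t\bigr)$ has vanishing velocity there, as one sees from $\mathcal{C}'(0)=\gamma'(0)+b\,\delta'(0)=0$ and the analogous identities at $t=\tfrac{\pi}{2},\pi,\tfrac{3\pi}{2}$, and a non-degeneracy check on the next derivative promotes the degeneration to a swallowtail by \cite{ArnoldSing}. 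The local model of a swallowtail — a self-intersection curve terminating at the point together with two cuspidal edges emanating from it — then reproduces exactly the incidence at each $V_{i}$ of the relevant conic edge with its two neighbouring arcs of (\ref{eq:cusp}), which is the content of the statement.

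The step I expect to demand the real work is this last one: pinning down the correct normal-form criteria for the $A_{2}$ (cuspidal edge) and $A_{3}$ (swallowtail) singularities, carrying out the attendant second- and third-order computations with $\Phi$, and — equally delicate — tracking the non-injectivity of $\Phi$ (the identification $t\leftrightarrow-t$ near $\sin t=0$, and $\Phi(\lambda,t)=\Phi(\lambda,2\pi-t)$ at $\lambda=a,b$), so that statements about the number of sheets and the number of cuspidal edges issuing from a vertex refer to the image surface $\Sigma^{+}$ rather than to the parameter plane.
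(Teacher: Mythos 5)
Your proposal is correct and follows essentially the same route as the paper --- everything is read off from the explicit parametrization of Proposition \ref{prop:tropic.surface}, with the four singular edges located by the vanishing of $\mathbf{r}_{\lambda}\times\mathbf{r}_{t}$ (equivalently of your $EG-F^{2}$), which is exactly how the paper obtains (\ref{eq:cusp}); your treatment is in fact considerably more detailed, since the paper's entire proof consists of that one sentence and asserts the vertices, the conic arcs, the self-intersection and the singularity types without argument. The one step you leave as an outline --- the $A_2$/$A_3$ normal-form verification for the cuspidal edges and the swallowtails --- is precisely the step the paper also omits, so your sketch neither conflicts with nor falls short of the published argument.
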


\begin{proof}
Equation (\ref{eq:cusp}) is obtained from the condition $\mathbf{r}_{t}\times\mathbf{r}_{\lambda}=0$.
\end{proof}

\begin{lemma}\label{lemma:tropic.double}
The tropic curves of the quadric $\mathcal{Q}_{\lambda_0}$ represent exactly the locus of points $(x,y,z)$ where equation
\begin{equation}\label{eq:jednacina}
\frac{x^2}{a-\lambda}+\frac{y^2}{b-\lambda}+\frac{z^2}{c+\lambda}=1
\end{equation}
has $\lambda_0$ as a multiple root.
\end{lemma}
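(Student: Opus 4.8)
The plan is to reduce the statement to a one-line differentiation identity and then recognize the outcome via Proposition \ref{prop:cone.tropic}. Fix a point $(x,y,z)$ and clear denominators in (\ref{eq:jednacina}): set
$$
\Phi(\lambda)=\frac{x^2}{a-\lambda}+\frac{y^2}{b-\lambda}+\frac{z^2}{c+\lambda}-1,
\qquad
P(\lambda)=(a-\lambda)(b-\lambda)(c+\lambda)\,\Phi(\lambda).
$$
This $P$ is a genuine polynomial in $\lambda$ (the poles cancel), of degree exactly $3$ for every finite point $(x,y,z)$ — the term $-(a-\lambda)(b-\lambda)(c+\lambda)$ always contributes the top-degree part — and its roots are precisely the parameters of the quadrics of (\ref{eq:confocal.quadrics3}) passing through $(x,y,z)$. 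By definition, $\lambda_0$ is a multiple root of (\ref{eq:jednacina}) exactly when $P(\lambda_0)=0$ and $P'(\lambda_0)=0$.

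Next I would differentiate $P=g\Phi$ with $g(\lambda)=(a-\lambda)(b-\lambda)(c+\lambda)$ by the Leibniz rule and evaluate at $\lambda_0$, assuming first that $\mathcal{Q}_{\lambda_0}$ is nondegenerate, i.e. $\lambda_0\notin\{a,b,-c\}$, so that $g(\lambda_0)\neq0$ and $\Phi,\Phi'$ are regular at $\lambda_0$. On the locus $P(\lambda_0)=0$ one has $\Phi(\lambda_0)=0$, equivalently $(x,y,z)\in\mathcal{Q}_{\lambda_0}$; then the term $g'(\lambda_0)\Phi(\lambda_0)$ drops out of $P'(\lambda_0)=g'(\lambda_0)\Phi(\lambda_0)+g(\lambda_0)\Phi'(\lambda_0)$, so $P'(\lambda_0)=0$ is equivalent to $\Phi'(\lambda_0)=0$. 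A direct computation gives
$$
\Phi'(\lambda_0)=\frac{x^2}{(a-\lambda_0)^2}+\frac{y^2}{(b-\lambda_0)^2}-\frac{z^2}{(c+\lambda_0)^2},
$$
which is exactly the equation of the tropic cone of $\mathcal{Q}_{\lambda_0}$ from Proposition \ref{prop:cone.tropic}. Hence, for a point already on $\mathcal{Q}_{\lambda_0}$, "$\lambda_0$ is a multiple root of (\ref{eq:jednacina})" is equivalent to "$(x,y,z)$ lies on the tropic cone", i.e. to "$(x,y,z)$ lies on the tropic curve of $\mathcal{Q}_{\lambda_0}$", which is the claim.

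What remains — and essentially the only place where care is needed — is the degenerate stratum $\lambda_0\in\{a,b,-c\}$, where $\mathcal{Q}_{\lambda_0}$ is a coordinate plane, $g(\lambda_0)=0$, and the Leibniz argument breaks. Here one checks directly; e.g. for $\lambda_0=a$ one finds $P(a)=(b-a)(c+a)\,x^2$, so $a$ is a root of $P$ precisely when $(x,y,z)$ lies in the plane $x=0$, and a double root precisely when in addition $\tfrac{y^2}{b-a}+\tfrac{z^2}{c+a}=1$, i.e. on the conic $\mathcal{C}^{yz}_{a}$ (and similarly $\mathcal{C}^{xz}_{b}$, $\mathcal{C}^{xy}_{-c}$ for $\lambda_0=b,-c$). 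So either one reads the lemma as stated for nondegenerate members, or one takes these distinguished conics as the definition of the "tropic curves" of the degenerate quadrics; either way the statement holds. I expect the bulk of the (short) write-up to be exactly the two displays above, with the degenerate case a brief aside.
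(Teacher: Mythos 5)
Your proof is correct and is essentially the paper's argument: the paper normalizes to $\lambda_0=0$, writes the cleared equation as a monic cubic $\lambda^3+q_2\lambda^2+q_1\lambda+q_0$, identifies $q_0=0$ with membership in $\mathcal{Q}_0$, and then verifies by an explicit algebraic identity that $q_1$ is a linear combination of the quadric equation and the tropic-cone equation of Proposition~\ref{prop:cone.tropic} --- which is exactly your Leibniz decomposition $P'=g'\Phi+g\Phi'$ evaluated at $\lambda_0$, since $q_1=P'(0)$ up to sign. Your aside on the degenerate parameters $\lambda_0\in\{a,b,-c\}$ is extra care the paper sidesteps via its (implicitly nondegenerate) normalization, and your observation that the multiple-root locus there is the distinguished conic $\mathcal{C}^{yz}_{a}$ (resp.\ $\mathcal{C}^{xz}_{b}$, $\mathcal{C}^{xy}_{-c}$) is consistent with the rest of the paper.
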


\begin{proof}
Without losing generality, take $\lambda_0=0$.
Equation (\ref{eq:jednacina}) is equivalent to:
\begin{equation}\label{eq:polynomial.jacobi}
\lambda^3+q_2\lambda^2+q_1\lambda+q_0=0,
\end{equation}
with
\begin{align*}
q_2 &= -x^2-y^2+z^2+a+b-c,\\
q_1 &= x^2(b-c) + y^2(a-c) -z^2(a+b) -ab +bc+ac,\\
q_0 &= x^2bc+y^2ac+z^2ab-abc.
\end{align*}
Polynomial (\ref{eq:polynomial.jacobi}) has $\lambda_0=0$ as a double zero if and only if $q_0=q_1=0$.
Obviously, $q_0=0$ is equivalent to the condition that $(x,y,z)$ belongs to $\mathcal{Q}_{0}$.
On the other hand, we have:
\begin{align*}
q_1&=x^2(b-c) + y^2(a-c) -z^2(a+b) -ab +bc+ac\\
&=(ab-bc+ac)\left(\frac{x^2}{a}+\frac{y^2}{b}+\frac{z^2}{c}-1\right) - 
abc\left(\frac{x^2}{a^2}+\frac{y^2}{b^2}-\frac{z^2}{c^2}\right),
\end{align*}
which is needed.
\end{proof}

\begin{proposition}\label{prop:tropic.light}
A tangent line to the tropic curve of a non-degenarate quadric of the family (\ref{eq:confocal.quadrics3}) is always space-like, except on a $1$-sheeted hyperboloid oriented along $y$-axis.

Tangent lines of a tropic on $1$-sheeted hyperboloids oriented along $y$-axis are light-like exactely at four points, while at other points of the tropic curve, the tangents are space-like.

Moreover, a tangent line to the tropic of a quadric from (\ref{eq:confocal.quadrics3}) belongs to the quadric if and only if it is light-like.
\end{proposition}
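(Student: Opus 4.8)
The plan is to push everything through the explicit parametrisation of $\Sigma^{+}$ and $\Sigma^{-}$ obtained in Proposition~\ref{prop:tropic.surface}. Fix a non-degenerate quadric $\mathcal{Q}_{\lambda}$, hold $\lambda$ fixed, and let $\mathbf{r}=(x,y,z)=\mathbf{r}(\lambda,t)$ be that parametrisation, so $\mathbf{r}_t=(x_t,y_t,z_t)$; then $t\mapsto\mathbf{r}(\lambda,t)$ parametrises the part of the tropic curve of $\mathcal{Q}_{\lambda}$ lying on $\Sigma^{+}$ (the part on $\Sigma^{-}$ being its mirror image in the $xy$-plane). I would first observe that $\mathbf{r}_t$ never vanishes there, since $x_t=0$ forces $\sin t=0$ and $y_t=0$ forces $\cos t=0$; hence the tangent line to the tropic is everywhere spanned by $\mathbf{r}_t$. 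This tangent is light-like exactly when $\langle\mathbf{r}_t,\mathbf{r}_t\rangle_{2,1}=x_t^2+y_t^2-z_t^2=0$; and since $\mathbf{r}$ satisfies $\tfrac{x^2}{a-\lambda}+\tfrac{y^2}{b-\lambda}+\tfrac{z^2}{c+\lambda}=1$, differentiating in $t$ shows the tangent line meets $\mathcal{Q}_\lambda$ tangentially automatically, so it is contained in $\mathcal{Q}_\lambda$ if and only if the remaining quadratic coefficient $\tfrac{x_t^2}{a-\lambda}+\tfrac{y_t^2}{b-\lambda}+\tfrac{z_t^2}{c+\lambda}$ vanishes. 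Thus the whole proposition reduces to evaluating these two quadratic expressions on $\mathbf{r}_t$.

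Next I would compute. Writing $\gamma=c+\lambda$, $u=a+c$, $v=b+c$ (so that $a-\lambda=u-\gamma$, $b-\lambda=v-\gamma$, $a-b=u-v$) and $m=m(t)=u\sin^2t+v\cos^2t$, a direct differentiation followed by simplification yields
\begin{equation*}
x_t^2+y_t^2-z_t^2=\frac{(m-\gamma)^2}{m},
\qquad
\frac{x_t^2}{a-\lambda}+\frac{y_t^2}{b-\lambda}+\frac{z_t^2}{c+\lambda}=\frac{m-\gamma}{m}.
\end{equation*}
The collapse of the apparently unwieldy intermediate expressions into these two rests on the elementary identity $(u\sin^2t+v\cos^2t)(v\sin^2t+u\cos^2t)-(u-v)^2\sin^2t\cos^2t=uv$, which produces the perfect square $m^2-2m\gamma+\gamma^2$; this is where the real work lies. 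Since only the third coordinate changes sign between $\Sigma^{+}$ and $\Sigma^{-}$, the quantity $z_t^2$ is the same on both sheets, so the two formulas hold at every point of every tropic curve.

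Finally the three assertions drop out. From the first formula $x_t^2+y_t^2-z_t^2\ge0$ (as $m>0$), so a tangent to a tropic is never time-like, and it is light-like exactly when $m=\gamma$. As $t$ runs over $[0,2\pi)$, the value $m$ runs over $[\,b+c,\,a+c\,]$, so $m=\gamma=c+\lambda$ is solvable only for $\lambda\in[b,a]$, i.e.\ for a non-degenerate quadric only for $\lambda\in(b,a)$ --- precisely the $1$-sheeted hyperboloids oriented along the $y$-axis. For such $\lambda$ the equation $m=\gamma$ becomes $\sin^2t=\dfrac{\lambda-b}{a-b}\in(0,1)$, which has exactly four roots $t\in[0,2\pi)$; elsewhere $x_t^2+y_t^2-z_t^2>0$, so the tangent is space-like. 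This proves the first two assertions. For the last one, by the reduction in the first paragraph the tangent line lies on $\mathcal{Q}_{\lambda}$ if and only if the second expression vanishes, and by the second formula this happens if and only if $m=\gamma$, i.e.\ if and only if the tangent is light-like.

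The main obstacle is purely the algebra of the middle step --- recognising the perfect-square structure behind the quoted quadratic identity in $\sin^2t$ and $\cos^2t$; once that is in hand, the ranges of $m$ and $\gamma$ make all the geometric conclusions immediate, and $\Sigma^{-}$ needs no separate treatment.
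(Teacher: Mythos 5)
Your proposal is correct and follows essentially the same route as the paper: parametrise the tropic curve as in Proposition~\ref{prop:tropic.surface}, compute $x_t^2+y_t^2-z_t^2$ and the coefficient of $\tau^2$ in the line-containment condition, and read off the conclusions; your expressions $\frac{(m-\gamma)^2}{m}$ and $\frac{m-\gamma}{m}$ are exactly the paper's $\frac{(a+b-2\lambda-(a-b)\cos 2t)^2}{2(a+b+2c-(a-b)\cos 2t)}$ and $\frac{a+b-2\lambda-(a-b)\cos 2t}{a+b+2c-(a-b)\cos 2t}$ rewritten via $m-\gamma=\frac{1}{2}(a+b-2\lambda-(a-b)\cos 2t)$ and $m=\frac{1}{2}(a+b+2c-(a-b)\cos 2t)$. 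The only difference is that you spell out some details the paper leaves implicit (non-vanishing of $\mathbf{r}_t$, the reduction of containment to the vanishing of the quadratic coefficient, and the explicit root count), which is fine.
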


\begin{figure}[h]
\includegraphics[width=6cm, height=8.4cm]{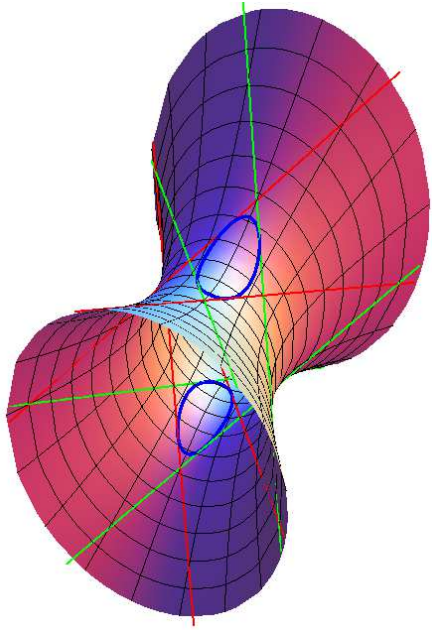}
\caption{The tropic curves and its light-like tangents on a hyperboloid.}\label{fig:hip-tropic-light}
\end{figure}

\begin{proof}
The tropic curves on $\mathcal{Q}_{\lambda}$, similarly as in Proposition \ref{prop:tropic.surface}, can be represented as:
$$
x(t) = \frac{a-\lambda}{\sqrt{a+c}}\cos t,\quad
y(t) = \frac{b-\lambda}{\sqrt{b+c}}\sin t,\quad
z(t) = \pm(c+\lambda)\sqrt{\frac{\cos^2t}{a+c}+\frac{\sin^2t}{b+c}},
$$
with $t\in[0,2\pi)$.

We calculate:
$$
\dot{x}^2+\dot{y}^2-\dot{z}^2=\frac{(a+b-2\lambda-(a-b)\cos2t)^2}{2(a+b+2c-(a-b)\cos2t)},
$$
which is always non-negative, and may attain zero only if $\lambda\in[b,a]$.
For $\lambda\in(b,a)$, there are exactly four values of $t$ in $[0,2\pi)$ where the expression attains zero.

Now, fix $t\in[0,2\pi)$ and $\lambda\in\mathbf{R}$.
The tangent line to the tropic of $\mathcal{Q}_{\lambda}$ at $(x(t),y(t),z(t))$ is completely contained in $\mathcal{Q}_{\lambda}$ if and only if, for each $\tau$:
$$
\frac{(x(t)+\tau\dot{x}(t))^2}{a-\lambda} + \frac{(y(t)+\tau\dot{y}(t))^2}{b-\lambda} + 
\frac{(z(t)+\tau\dot{z}(t))^2}{c+\lambda}=1,
$$
which is equivalent to:
$$
\frac{a+b-2\lambda-(a-b)\cos2t}{a+b+2c-(a-b)\cos2t}=0.
$$
\end{proof}

\begin{remark}
In other words,
the only quadrics of the family (\ref{eq:confocal.quadrics3}) that may contain a tangent to its tropic curve are  $1$-sheeted hyperboloids oriented along $y$-axis,
and those tangents are always light-like.
The tropic curves and their light-like tangents on such an hyperboloid are shown on Figure \ref{fig:hip-tropic-light}.
\end{remark}

Notice that equations obtained in the proof of Proposion \ref{prop:tropic.light} are equivalent to equation (\ref{eq:cusp}) of Proposition \ref{prop:tetra}, which leads to the following:

\begin{proposition}\label{prop:tetra.hyp}
Each generatrix of $\Sigma^{+}$ and $\Sigma^{-}$ is contained in one $1$-sheeted hyperboloid oriented along $y$-axis from (\ref{eq:confocal.quadrics3}).
Moreover, such a generatrix is touching at the same point one of the tropic curves of the hyperboloid and one of the cusp-like edges of the corresponding curved tetrahedron. 
\end{proposition}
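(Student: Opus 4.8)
The plan is to connect the explicit parametrization of $\Sigma^{\pm}$ from Proposition \ref{prop:tropic.surface} with the description of the tropic curves on a fixed $1$-sheeted hyperboloid oriented along the $y$-axis, exploiting that both are controlled by the same relation \eqref{eq:cusp}. First I would fix a generatrix of $\Sigma^{+}$: in the parametrization of Proposition \ref{prop:tropic.surface}, a generatrix is obtained by fixing $t=t_0$ and letting $\lambda$ vary. I want to identify which confocal quadric $\mathcal{Q}_{\mu}$ contains this whole line. The natural guess, read off from the geometry of the tetrahedron $\mathcal{T}^{+}$ in Proposition \ref{prop:tetra}, is that $\mu=\mu(t_0)$ is the value for which the point with parameters $(\lambda,t)=(\mu(t_0),t_0)$ lies on the cuspidal edge, i.e.\ the value determined by \eqref{eq:cusp}: $\mu(t_0)=\tfrac12\bigl(a+b-(a-b)\cos 2t_0\bigr)$. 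Since $\cos 2t_0\in[-1,1]$, this $\mu(t_0)$ ranges over $[b,a]$, so $\mathcal{Q}_{\mu(t_0)}$ is indeed a $1$-sheeted hyperboloid oriented along the $y$-axis (using the classification of geometric types in Section \ref{sec:geom3}), which is exactly the claim we are after.

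The core computation is then to verify that the point $\mathbf{r}(\lambda,t_0)$ of $\Sigma^{+}$ actually lies on $\mathcal{Q}_{\mu(t_0)}$ for \emph{every} $\lambda$, not just $\lambda=\mu(t_0)$. Substituting $x=\tfrac{a-\lambda}{\sqrt{a+c}}\cos t_0$, $y=\tfrac{b-\lambda}{\sqrt{b+c}}\sin t_0$, $z=(c+\lambda)\sqrt{\tfrac{\cos^2 t_0}{a+c}+\tfrac{\sin^2 t_0}{b+c}}$ into $\tfrac{x^2}{a-\mu}+\tfrac{y^2}{b-\mu}+\tfrac{z^2}{c+\mu}$, the dependence on $\lambda$ in the three numerators is $(a-\lambda)^2$, $(b-\lambda)^2$, $(c+\lambda)^2$ respectively, so the whole expression is a rational function of $\lambda$ whose numerator is quadratic in $\lambda$; I would check that, with $\mu=\mu(t_0)$, this expression is identically equal to $1$ in $\lambda$. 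In fact this is precisely the content of the equivalence noted right before the proposition: the condition that the tangent line to the tropic of $\mathcal{Q}_{\mu}$ at parameter $t_0$ be contained in $\mathcal{Q}_{\mu}$, which the proof of Proposition \ref{prop:tropic.light} reduced to $a+b-2\mu-(a-b)\cos 2t_0=0$, i.e.\ to \eqref{eq:cusp}. Thus the generatrix of $\Sigma^{+}$ through $t_0$ is nothing but the tangent line to the tropic curve of $\mathcal{Q}_{\mu(t_0)}$ at the point $(x(t_0),y(t_0),z(t_0))$, and by Proposition \ref{prop:tropic.light} that line is light-like — consistent with Proposition \ref{prop:tropic.surface} — and contained in $\mathcal{Q}_{\mu(t_0)}$.

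It remains to address the ``at the same point'' assertion: the generatrix touches one tropic curve of the hyperboloid and one cuspidal edge of $\mathcal{T}^{+}$ at a common point. The tangency with the tropic is by construction the point $(x(t_0),y(t_0),z(t_0))$ just identified. For the cuspidal edge, I would invoke Proposition \ref{prop:tetra}: the cuspidal edges of $\Sigma^{+}$ are cut out by \eqref{eq:cusp}, which is the relation $-a-b+2\lambda+(a-b)\cos 2t=0$; the point of the generatrix $t=t_0$ satisfying this is exactly $\lambda=\mu(t_0)$, which is the point $(x(t_0),y(t_0),z(t_0))$ on the tropic of $\mathcal{Q}_{\mu(t_0)}$ — the same point. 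Finally I would note that the generatrix is tangent (not merely incident) to the cuspidal edge because a cuspidal edge of a developable ruled surface is the envelope of its rulings, so each ruling touches it; alternatively this follows directly from differentiating the parametrization of the edge. The case of $\Sigma^{-}$ is identical by the reflection $z\mapsto-z$. The main obstacle I anticipate is purely bookkeeping: carrying out the substitution and confirming the quadratic-in-$\lambda$ numerator collapses to the denominator exactly when \eqref{eq:cusp} holds; but since this coincides with an identity already extracted in the proof of Proposition \ref{prop:tropic.light}, no genuinely new computation is needed.
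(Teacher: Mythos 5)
Your proposal is correct and follows essentially the same route as the paper, which simply observes that the condition from the proof of Proposition \ref{prop:tropic.light} (for a tropic tangent to be light-like and contained in the quadric) coincides with the cuspidal-edge equation (\ref{eq:cusp}); you have merely made explicit the value $\mu(t_0)=\tfrac12\bigl(a+b-(a-b)\cos 2t_0\bigr)\in[b,a]$ and the identification of the generatrix with that tropic tangent. The paper leaves all of this as a one-line remark preceding the proposition, so your elaboration fills in exactly the intended details.
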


\begin{corollary}\label{cor:tangent.surface}
Surfaces $\Sigma^{+}$ and $\Sigma^{-}$ are tangent surfaces of the cuspidal edges of thetrahedra $\mathcal{T}^+$ and $\mathcal{T}^-$ respectively.
\end{corollary}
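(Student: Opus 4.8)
The plan is to combine Proposition \ref{prop:tetra.hyp} with the light-like developable classification already invoked after Lemma \ref{lema:tropic.s.ll}. Recall that each $\Sigma^{\pm}$ has all pseudo-normals light-like, hence is light-like developable, and therefore — being contained neither in a light-like plane nor in a light-like cone — must be, part by part, the tangent surface of some light-like curve. It remains only to identify that curve as the union of the cuspidal edges of $\mathcal{T}^{\pm}$. First I would recall from Proposition \ref{prop:tetra} that the four cuspidal edges of $\Sigma^{+}$ are exactly the locus where $\mathbf{r}_t\times\mathbf{r}_\lambda=0$, i.e.\ the singular locus of the parametrization, cut out by equation (\ref{eq:cusp}); these are smooth light-like arcs (light-likeness being inherited from the light-like generatrices, which are tangent to them, as established in Proposition \ref{prop:tetra.hyp}). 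For a ruled developable surface with a cuspidal edge, the generatrices are precisely the tangent lines of that cuspidal edge; this is the standard local normal form of a developable surface near a cuspidal (regression) edge.

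The key step is then Proposition \ref{prop:tetra.hyp} itself: it tells us that every generatrix of $\Sigma^{+}$ touches one of the cuspidal edges of $\mathcal{T}^{+}$ at a point, and moreover that the contact is a genuine tangency (the generatrix and the cuspidal edge share that point and, since the generatrix degenerates there as the $t$-curve through a point where $\mathbf{r}_t$ is parallel to $\mathbf{r}_\lambda$, the generatrix direction agrees with the tangent direction of the edge). Concretely, one differentiates the cuspidal edge — the curve obtained by substituting the solution $\lambda=\lambda(t)$ of (\ref{eq:cusp}) into the parametrization of Proposition \ref{prop:tropic.surface} — and checks that its tangent vector is proportional to $\mathbf{r}_\lambda$ at that point, which is the direction of the generatrix through that point. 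Since a surface swept by the tangent lines of a space curve $\gamma$ is by definition the tangent surface of $\gamma$, and since through every point of $\Sigma^{+}$ there passes exactly one generatrix, it follows that $\Sigma^{+}$ coincides with the tangent surface of the union of the four cuspidal edges of $\mathcal{T}^{+}$; the argument for $\Sigma^{-}$ is identical by the $xy$-symmetry noted before Proposition \ref{prop:tetra}.

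The main obstacle is making the tangency claim in Proposition \ref{prop:tetra.hyp} precise enough to conclude "tangent surface" rather than merely "surface meeting the edge": one must verify that along each generatrix the contact with the cuspidal edge is first-order, i.e.\ that the generatrix really is the tangent line and not just a secant or a line through one point. This reduces to the computation that $\frac{d}{dt}\bigl(\mathbf{r}(t,\lambda(t))\bigr) \parallel \mathbf{r}_\lambda$ at the contact point, which holds because there $\mathbf{r}_t \parallel \mathbf{r}_\lambda$ (by (\ref{eq:cusp})), so the $\lambda'(t)\,\mathbf{r}_\lambda$ and $\mathbf{r}_t$ contributions are collinear. I expect this to be a short verification once the substitution $\lambda=\lambda(t)$ is carried out, and the rest of the corollary then follows from the already-cited classification of light-like developable surfaces in \cite{CI2010} together with the exclusion of the plane and cone cases done earlier in this section.
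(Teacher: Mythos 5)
Your proposal is correct and follows essentially the same route as the paper: the corollary is presented there as an immediate consequence of Proposition \ref{prop:tetra.hyp}, the point being exactly that each light-like generatrix (the $\lambda$-line of the parametrization, with direction $\mathbf{r}_{\lambda}$) meets a cuspidal edge at a point where $\mathbf{r}_t\times\mathbf{r}_{\lambda}=0$, so the edge's tangent $\mathbf{r}_t+\lambda'(t)\mathbf{r}_{\lambda}$ is collinear with the generatrix and $\Sigma^{\pm}$ is swept by the tangent lines of the cuspidal edges. Your write-up merely makes explicit the first-order tangency verification and the appeal to the classification of light-like developables from \cite{CI2010}, both of which the paper leaves implicit.
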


In next propositions, we give further analysis the light-like tangents to the tropic curves on an $1$-sheeted hyperboloid oriented along $y$-axis.

\begin{proposition}\label{prop:hyp.y.presek}
For a fixed $\lambda_0\in(b,a)$,
consider a hyperboloid $\mathcal{Q}_{\lambda_0}$ from (\ref{eq:confocal.quadrics3}) and an arbitrary point $(x,y,z)$ on $\mathcal{Q}_{\lambda_0}$.
Equation (\ref{eq:jednacina}) has, along with $\lambda_0$, two other roots in $\mathbf{C}$: denote them by $\lambda_1$ and $\lambda_2$.
Then $\lambda_1$=$\lambda_2$ if and only if $(x,y,z)$ is placed on a light-like tangent to a tropic curve of $\mathcal{Q}_{\lambda_0}$.
\end{proposition}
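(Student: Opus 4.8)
The plan is to reduce the statement to a discriminant computation for the cubic \eqref{eq:polynomial.jacobi}, exactly as in the proof of Lemma \ref{lemma:tropic.double}, but now keeping track of all three roots rather than just the multiplicity of one of them. Fix a point $(x,y,z)$ on $\mathcal{Q}_{\lambda_0}$. The three roots of \eqref{eq:jednacina}, viewed as the cubic \eqref{eq:polynomial.jacobi} in $\lambda$, are $\lambda_0$ (which is real, by hypothesis) together with $\lambda_1,\lambda_2\in\mathbf{C}$. The condition $\lambda_1=\lambda_2$ is the vanishing of the discriminant of the quadratic factor obtained after dividing \eqref{eq:polynomial.jacobi} by $(\lambda-\lambda_0)$; equivalently, since $\lambda_0$ is already a root, it is the condition that the full cubic \eqref{eq:polynomial.jacobi} have a \emph{repeated} root (either $\lambda_1=\lambda_2$, or one of them equal to $\lambda_0$, the latter case being covered by Lemma \ref{lemma:tropic.double}). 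So the locus $\{\lambda_1=\lambda_2\}$ on $\mathcal{Q}_{\lambda_0}$ is precisely the zero set of the discriminant of the cubic, intersected with $\mathcal{Q}_{\lambda_0}$.

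First I would compute, using the relation $\lambda_1\lambda_2 = q_0/\lambda_0$ and $\lambda_1+\lambda_2 = -q_2-\lambda_0$ (valid once we use $q_0 = -\lambda_0(\text{something})$ coming from membership $(x,y,z)\in\mathcal{Q}_{\lambda_0}$, shift so that $\lambda_0 = 0$ as in Lemma \ref{lemma:tropic.double}), that $(\lambda_1-\lambda_2)^2 = (\lambda_1+\lambda_2)^2 - 4\lambda_1\lambda_2 = q_2^2 - 4q_1$ after the normalization $\lambda_0=0$, $q_0=0$. Thus $\lambda_1=\lambda_2$ iff $q_2^2 = 4q_1$, an explicit quadric-type equation in $(x,y,z)$ with coefficients in $a,b,c$. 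On the other side, the light-like tangent lines to the tropic curve of $\mathcal{Q}_{\lambda_0}$ were computed explicitly in the proof of Proposition \ref{prop:tropic.light}: they are the generatrices of $\Sigma^{+}$ and $\Sigma^{-}$ lying in $\mathcal{Q}_{\lambda_0}$, parametrized by the four values of $t$ solving \eqref{eq:cusp}, i.e.\ $-a-b+2\lambda_0+(a-b)\cos 2t = 0$, with the line being $\tau\mapsto (x(t)+\tau\dot x(t), y(t)+\tau\dot y(t), z(t)+\tau\dot z(t))$. The remaining task is to show that the union of these four light-like lines is exactly the intersection of $\mathcal{Q}_{\lambda_0}$ with the quadric $q_2^2 = 4q_1$.

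The cleanest way to finish: show that $q_2^2 - 4q_1$, restricted to $\mathcal{Q}_{\lambda_0}$, factors (up to a nonvanishing scalar) as a product tied to the two cones of Proposition \ref{prop:cone.tropic} for the adjacent confocal parameters, or directly that $q_2^2-4q_1$ vanishes along each of the four generatrices and, being of the right degree, cuts out nothing more. Concretely, I would substitute the generatrix parametrization into $q_2^2 - 4q_1$ and verify it vanishes identically in $\tau$ for each of the four $t$-values solving \eqref{eq:cusp} — this is a finite check using the trigonometric identities already appearing in the proofs of Propositions \ref{prop:tropic.surface} and \ref{prop:tropic.light} (the recurring quantity $a+b-2\lambda-(a-b)\cos 2t$). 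For the converse inclusion, a dimension/degree count suffices: $\{q_2^2=4q_1\}\cap\mathcal{Q}_{\lambda_0}$ is a curve of controlled degree on the hyperboloid, the four generatrices already account for that degree, and both sets are the locus where the two ``moving'' roots collide, hence they coincide. I expect the main obstacle to be precisely this bookkeeping step — matching the explicit quartic/quadric $q_2^2-4q_1=0$ against the four light-like generatrices and ruling out extraneous components — rather than any conceptual difficulty; the normalization $\lambda_0=0$ and reuse of the factorizations from Lemma \ref{lemma:tropic.double} should make the algebra manageable.
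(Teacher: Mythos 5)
Your route is genuinely different from the paper's. The paper disposes of this proposition in one line by combining three structural facts already established: Lemma \ref{lemma:tropic.double} identifies the locus where the cubic has a multiple root with the union of all tropic curves, i.e.\ with $\Sigma^{+}\cup\Sigma^{-}$ (Proposition \ref{prop:tropic.surface}), and Proposition \ref{prop:tetra.hyp} says that the generatrices of $\Sigma^{\pm}$ are exactly the light-like tangents to tropics of the $y$-oriented hyperboloids and lie on them; intersecting with $\mathcal{Q}_{\lambda_0}$ then gives both directions. You instead normalize $\lambda_0=0$, observe that $q_0=0$ reduces the cubic to $\lambda(\lambda^2+q_2\lambda+q_1)$ so that $\lambda_1=\lambda_2$ becomes the explicit condition $q_2^2=4q_1$, and propose to match the quartic curve $\{q_2^2=4q_1\}\cap\mathcal{Q}_{\lambda_0}$ against the light-like tangent lines by direct substitution plus a B\'ezout count. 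This is more self-contained (it does not lean on Proposition \ref{prop:tetra.hyp}) and produces an explicit equation for the union of the light-like tangents, at the cost of a computation the paper avoids. Your Vieta bookkeeping is correct after the normalization (the preliminary formula $\lambda_1\lambda_2=q_0/\lambda_0$ is meaningless at $\lambda_0=0$, but the normalized identity $(\lambda_1-\lambda_2)^2=q_2^2-4q_1$ is what you actually use).

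There is, however, one concrete error that breaks the step you yourself identify as the crux. The hyperboloid $\mathcal{Q}_{\lambda_0}$ carries \emph{eight} light-like tangent lines, not four: equation \eqref{eq:cusp} has four solutions $t\in[0,2\pi)$ for fixed $\lambda_0\in(b,a)$, but there are two tropic curves (one on $\Sigma^{+}$, one on $\Sigma^{-}$, distinguished by the sign of $z$), each contributing four tangency points --- this is exactly the count ``two tropic curves and their eight light-like tangent lines'' used in Section \ref{sec:rel3}. This is not merely cosmetic: $q_2^2-4q_1$ is a quartic in $(x,y,z)$, so its intersection with the quadric $\mathcal{Q}_{\lambda_0}$ is a curve of degree $8$, and the degree count closes only if you exhibit all eight lines inside it. With only four lines accounted for, your converse inclusion leaves room for a residual degree-$4$ component and the argument does not conclude. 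Once you verify the identical vanishing of $q_2^2-4q_1$ along all eight generatrices (and check that the quartic does not vanish identically on $\mathcal{Q}_{\lambda_0}$, which is clear since generic points have $\lambda_1\neq\lambda_2$), the B\'ezout argument does close and your proof is complete.
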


\begin{proof}
Follows from the fact that the light-like tangents are contained in the $\Sigma^{+}\cup\Sigma^{-}$, see Proposition \ref{prop:tropic.surface}, Lemma \ref{lemma:tropic.double}, and Proposition \ref{prop:tetra.hyp}.
\end{proof}

\begin{proposition}
Two light-like lines on a one-sheeted hyperboloid oriented along $y$-axis from (\ref{eq:confocal.quadrics3}) are either skew or intersect each other on a degenerate quadric from (\ref{eq:confocal.quadrics3}).
\end{proposition}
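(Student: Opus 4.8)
The plan is to parametrize the two families of light-like lines on a one-sheeted hyperboloid oriented along the $y$-axis and to examine when two such lines meet. Fix $\lambda_0\in(b,a)$, put $\mathcal{Q}=\mathcal{Q}_{\lambda_0}$, and write $A=a-\lambda_0$, $B=\lambda_0-b>0$, $C=c+\lambda_0$, so that $\mathcal{Q}$ is $x^2/A - y^2/B + z^2/C = 1$. Since $\mathcal{Q}$ is a smooth ruled quadric, through each of its points pass exactly two generating lines, and these split into two rulings; the key first step is to check that all rulings are light-like in the Minkowski metric $\langle\cdot,\cdot\rangle_{2,1}$. A generating line has direction vector $v=(v_1,v_2,v_3)$ with $v_1^2/A - v_2^2/B + v_3^2/C = 0$ (the direction lies on the asymptotic cone of $\mathcal{Q}$); one verifies that the same condition forces $\langle v,v\rangle_{2,1}=v_1^2+v_2^2-v_3^2$ to vanish identically, because the asymptotic cone of $\mathcal{Q}$ and the light cone $x^2+y^2-z^2=0$ intersect in the four directions produced by Proposition~\ref{prop:tropic.light}, and a short computation (or the observation that $\mathcal{Q}$ contains a light-like tropic tangent, hence a light-like generatrix, in each ruling, combined with the fact that all lines in one ruling of a quadric have directions on a common conic) pins down that the whole ruling is light-like. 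Alternatively, and more cleanly, I would invoke Proposition~\ref{prop:tetra.hyp}: every generatrix of $\Sigma^+\cup\Sigma^-$ lies on such a hyperboloid and is light-like, and a count shows these exhaust both rulings.

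Next I would study intersections. Two distinct generatrices either belong to the same ruling, in which case they are automatically skew (generators of one ruling of a nondegenerate quadric never meet), or to opposite rulings, in which case they always meet in exactly one point of $\mathcal{Q}$ — this is the standard doubly-ruled-quadric fact. So the real content is: \emph{when two generatrices from opposite rulings meet, their intersection point lies on a degenerate quadric of the confocal family}, i.e.\ on one of the coordinate planes $x=0$, $y=0$, $z=0$, or the plane at infinity. For the point at infinity: two generatrices from opposite rulings are parallel (hence meet at infinity) precisely when they have the same direction; by Proposition~\ref{prop:tropic.light} and Proposition~\ref{prop:hyp.y.presek} there are exactly four light-like directions realized on $\mathcal{Q}$, and these are the four directions along which generatrices from the two rulings coincide as directions, so that case genuinely occurs and gives the "plane at infinity" degenerate quadric.

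For finite intersection points, I would argue through Jacobi-type coordinates. Take a point $P=(x,y,z)$ on $\mathcal{Q}=\mathcal{Q}_{\lambda_0}$ that also lies on a light-like tangent to a tropic curve. By Proposition~\ref{prop:hyp.y.presek}, this is equivalent to equation~(\ref{eq:jednacina}) having a double root $\lambda_1=\lambda_2$ besides $\lambda_0$. Each of the two light-like generatrices of $\mathcal{Q}$ through $P$ is, by Proposition~\ref{prop:tetra.hyp}, a tangent line to a tropic curve of $\mathcal{Q}$; and by Lemma~\ref{lemma:tropic.double} a tropic-tangent line of $\mathcal{Q}$ is exactly a line along which the repeated root $\lambda_0$ is, generically, the repeated value. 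The point $P$ where two such lines from opposite rulings meet is distinguished by the collision of the \emph{other} two roots $\lambda_1=\lambda_2$. So I must show: if (\ref{eq:jednacina}) at $P$ has both $\lambda_0$ and some $\mu$ as roots with $\mu$ a double root, and $P$ lies on two tropic-tangent generatrices of $\mathcal{Q}_{\lambda_0}$, then $\mu\in\{a,b,-c,\infty\}$. Concretely, write the cubic~(\ref{eq:polynomial.jacobi}) as $(\lambda-\lambda_0)(\lambda-\mu)^2=0$ and expand $q_0,q_1,q_2$ in terms of $x,y,z$; the condition that two distinct light-like generatrices pass through $P$ — equivalently, that $P$ also lies on $\Sigma^+\cap\Sigma^-$ or on an edge of a curved tetrahedron — forces, via Proposition~\ref{prop:tetra} and Proposition~\ref{prop:tropic.surface}, one of the coordinates $x,y,z$ to vanish or the direction to become asymptotic, i.e.\ $\mu\in\{a,b,-c,\infty\}$.

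The main obstacle is precisely this last bookkeeping: matching "two opposite-ruling light-like generatrices through $P$" with the algebraic collision $\lambda_1=\lambda_2$ and then showing the collision value is degenerate. I expect the cleanest route is geometric rather than computational: a point lying on generatrices from \emph{both} rulings of $\mathcal{Q}_{\lambda_0}$ that are simultaneously tangent to tropic curves must lie on the self-intersection locus of $\Sigma^+$ or $\Sigma^-$ (the short arcs of $\mathcal{C}^{xz}_b$ and $\mathcal{C}^{yz}_a$, which lie in $y=0$ and $x=0$ respectively), or on $\Sigma^+\cap\Sigma^-$ (the ellipse in $z=0$), or the configuration degenerates to parallel lines meeting at infinity — and each of these loci lies on a degenerate quadric $\mathcal{Q}_b$, $\mathcal{Q}_a$, $\mathcal{Q}_{-c}$, or $\mathcal{Q}_\infty$. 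Carrying out this identification carefully, using Propositions~\ref{prop:tropic.surface}, \ref{prop:tetra}, \ref{prop:tetra.hyp}, and \ref{prop:hyp.y.presek}, completes the proof; if the synthetic argument has gaps, a direct elimination from (\ref{eq:polynomial.jacobi}) with the double-root condition and the two generatrix equations will finish it, at the cost of a somewhat lengthy resultant computation.
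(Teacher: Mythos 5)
Your opening step is false: the generatrices of $\mathcal{Q}_{\lambda_0}$, $\lambda_0\in(b,a)$, are \emph{not} all light-like. A generatrix direction $v$ lies on the asymptotic cone $v_1^2/(a-\lambda_0)+v_2^2/(b-\lambda_0)+v_3^2/(c+\lambda_0)=0$, which is a different quadratic cone from the light cone $v_1^2+v_2^2-v_3^2=0$; the two cones meet in exactly four real directions, so the hyperboloid carries exactly eight light-like generatrices --- for each of the four directions, one line from each ruling, the two being parallel. This is what the paper itself records: Proposition \ref{prop:tropic.light} gives four light-like tangency points per tropic curve, and Section \ref{sec:rel3} counts ``eight light-like tangent lines''. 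Your proposal is in fact internally inconsistent on this point, since later you yourself invoke ``exactly four light-like directions realized on $\mathcal{Q}$''. The error does not destroy your skeleton --- a light-like line contained in the quadric is still a generatrix, so two of them in the same ruling are skew, two in opposite rulings meet in exactly one projective point, and the parallel pairs meet on $\mathcal{Q}_\infty$ --- but the claim that the light-like lines exhaust the rulings, and both arguments you offer for it, must be discarded.

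More seriously, the part of the statement that actually requires proof --- that a \emph{finite} common point $P$ of two light-like generatrices lies on one of the coordinate planes, equivalently that the double root $\mu$ of (\ref{eq:polynomial.jacobi}) at $P$ belongs to $\{a,b,-c\}$ --- is exactly what you label ``the main obstacle'', and neither of your two routes closes it. The synthetic route rests on the assertion that a point lying on two distinct tropic-tangent generatrices of the same hyperboloid must sit on the self-intersection arcs listed in Proposition \ref{prop:tetra} or on $\Sigma^+\cap\Sigma^-$; that would require knowing that Proposition \ref{prop:tetra} exhausts all mutual intersections of generatrices of $\Sigma^{\pm}$, which is not established there (it describes the singular set of the tetrahedron, not a classification of all pairwise meetings of generatrices). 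The resultant computation is announced but not performed. So what you have is a plan with a genuine gap, not a proof. For contrast, the paper disposes of the proposition in one line by symmetry: the hyperboloid, the light cone, and hence the set of eight light-like generatrices are invariant under the reflections in the coordinate planes; a non-parallel intersecting pair from opposite rulings is interchanged by one of these reflections, so its common point is a fixed point of that reflection and lies in the corresponding coordinate plane, while parallel pairs meet on the plane at infinity. That symmetry argument is both shorter and avoids any appeal to the global structure of $\Sigma^{\pm}$.
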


\begin{proof}
Follows from the fact that the hyperboloid is symmetric with respect to the coordinate planes.
\end{proof}

\begin{lemma}\label{lemma:tropic.non.y}
Consider a non-degenerate quadric $\mathcal{Q}_{\lambda_0}$, which is not a hyperboloid oriented along $y$-axis,
i.e.\ $\lambda_0\not\in[b,a]\cup\{-c\}$.
Then each point of $\mathcal{Q}_{\lambda_0}$ which is not on one of the tropic curves is contained in two additional distinct quadrics from the family (\ref{eq:confocal.quadrics3}).

Consider two points $A$, $B$ of $\mathcal{Q}_{\lambda_0}$, which are placed in the same connected component bounded by the tropic curves, and denote by $\lambda'_A$, $\lambda''_A$ and $\lambda'_B$, $\lambda''_B$ the solutions, different than $\lambda_0$, of equation (\ref{eq:jednacina}) corresponding to $A$ and $B$ respectively.
Then, if $\lambda_0$ is smaller (resp.\ bigger, between) than $\lambda'_A$, $\lambda''_A$, it is also smaller (resp.\ bigger, between) than $\lambda'_B$, $\lambda''_B$.
\end{lemma}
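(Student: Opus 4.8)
The plan is to analyze the cubic equation \eqref{eq:polynomial.jacobi} (equivalently \eqref{eq:jednacina} after clearing denominators) as a polynomial in $\lambda$ whose coefficients depend on the point $(x,y,z)$, and to track how its roots move as the point varies continuously inside a fixed connected component of $\mathcal{Q}_{\lambda_0}$ cut out by the tropic curves. The first statement is essentially a restatement of facts already established: the number of quadrics through a point of $\mathbf{E}^{2,1}$ is $3$ or $1$ (real count) by the discussion following \eqref{eq:confocal}, with the degenerate-to-complex transition happening precisely on the tropic curves by Lemma \ref{lemma:tropic.double} and Proposition \ref{prop:hyp.y.presek} (the locus where two roots coincide is contained in $\Sigma^+\cup\Sigma^-$, i.e.\ on tropic curves, and for quadrics not oriented along the $y$-axis there are no light-like tangents to the tropic by Proposition \ref{prop:tropic.light}, so the only way two roots collide is for the point itself to lie on a tropic curve). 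Hence off the tropic curves the three roots $\lambda_0,\lambda',\lambda''$ are all distinct; since $\mathcal{Q}_{\lambda_0}$ is not one of the hyperboloids oriented along the $y$-axis, $\lambda_0$ is not a double root anywhere on the quadric, so $\lambda'$ and $\lambda''$ are genuinely the parameters of two further distinct quadrics through the point (which may be complex conjugates of each other if the point lies in the region with only one real confocal quadric).

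For the second statement, fix $\lambda_0$ and consider the cubic $g_{(x,y,z)}(\lambda)=\lambda^3+q_2\lambda^2+q_1\lambda+q_0$ from the proof of Lemma \ref{lemma:tropic.double}, factored as $g_{(x,y,z)}(\lambda)=(\lambda-\lambda_0)\,h_{(x,y,z)}(\lambda)$ with $h$ a monic quadratic whose roots are $\lambda'$ and $\lambda''$. The three mutually exclusive cases in the statement — $\lambda_0$ strictly below both of $\lambda',\lambda''$; strictly above both; or strictly between them (which includes the case that $\lambda',\lambda''$ are non-real, interpreting "between" appropriately, or more cleanly: $\lambda_0$ lies strictly between the real parts / is separated from the pair) — are distinguished by the sign of $h_{(x,y,z)}(\lambda_0)=g'_{(x,y,z)}(\lambda_0)/1$ together with comparison of $\lambda_0$ to $-q_2/\dots$; concretely, $\lambda_0$ lies between $\lambda'$ and $\lambda''$ iff $h_{(x,y,z)}(\lambda_0)<0$ (and in particular $\lambda',\lambda''$ are then real and distinct), while $h_{(x,y,z)}(\lambda_0)>0$ corresponds to $\lambda_0$ outside the interval spanned by the pair, with the side determined by the sign of $h_{(x,y,z)}'(\lambda_0)$ or equivalently of $\lambda_0-\tfrac12(\lambda'+\lambda'')$. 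The key point is that $h_{(x,y,z)}(\lambda_0)$ and the auxiliary discriminating quantity are continuous functions of $(x,y,z)$ on $\mathcal{Q}_{\lambda_0}$, and on the connected component in question they cannot vanish: $h_{(x,y,z)}(\lambda_0)=0$ would force $\lambda_0$ to be a double root of $g$, i.e.\ the point to lie on a tropic curve of $\mathcal{Q}_{\lambda_0}$, which is excluded since the point is interior to a component bounded by those curves. Therefore each discriminating quantity has constant sign on the component, so the three alternatives are locally constant, hence constant on the connected component, which is exactly the assertion.

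The main obstacle I expect is the bookkeeping in the "between" case when $\lambda',\lambda''$ become a complex-conjugate pair: one must set up the trichotomy so that it is genuinely a partition of $\mathcal{Q}_{\lambda_0}\setminus(\text{tropics})$ into three pieces and so that the deciding quantities are continuous across the passage from real to complex roots. The clean way is to avoid ever comparing $\lambda_0$ with individual complex roots and instead phrase everything through the two real-analytic functions $P(x,y,z):=h_{(x,y,z)}(\lambda_0)$ and, say, $Q(x,y,z):=\lambda_0+\tfrac12 q_2+\tfrac14\lambda_0^{-1}(\cdots)$ — more simply $Q:=h_{(x,y,z)}'(\lambda_0)$, the derivative of the quadratic factor at $\lambda_0$, which equals $2\lambda_0-(\lambda'+\lambda'')$ — and observe that "$\lambda_0$ between $\lambda',\lambda''$" $\iff P<0$, "$\lambda_0$ above both" $\iff P>0,\ Q>0$, "$\lambda_0$ below both" $\iff P>0,\ Q<0$, uniformly whether the roots are real or not (when they are complex, $P>0$ automatically and $Q$ measures which side $\lambda_0$ is of the common real part). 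Both $P$ and $Q$ are polynomials in $(x,y,z)$ with no zeros on the component by the tropic-curve argument above, so their signs are locally constant; this closes the proof. The remaining computations — writing $P$ and $Q$ explicitly from $q_0,q_1,q_2$ and checking the claimed equivalences against a generic point — are routine and I would relegate them to a line or two.
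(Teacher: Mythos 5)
Your proposal is correct and follows essentially the same route as the paper: the paper's own proof is a one-sentence appeal to the continuity of the roots of (\ref{eq:jednacina}) together with the fact that two roots coincide exactly on the tropic curves (and, for $y$-oriented hyperboloids, their light-like tangents), and your argument via the constant signs of $h_{(x,y,z)}(\lambda_0)$ and $h'_{(x,y,z)}(\lambda_0)$ on each connected component is a careful elaboration of precisely that continuity argument. The one point you leave slightly open --- that $\lambda'$, $\lambda''$ are genuinely real (not a conjugate pair) on every component of a non-$y$-oriented quadric, which the first sentence of the lemma asserts and which your parenthetical hedge about complex conjugates does not settle --- is equally unaddressed in the paper's proof, which relies on the same collision-locus fact without elaboration.
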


\begin{lemma}\label{lemma:tropic.y}
Let $\mathcal{Q}_{\lambda_0}$ be a hyperboloid oriented along $y$-axis, $\lambda_0\in(b,a)$, and $A$, $B$ two points of $\mathcal{Q}_{\lambda_0}$, which are placed in the same connected component bounded by the tropic curves and light-like tangents. 
Then, if $A$ is contained in two more quadrics from the family (\ref{eq:confocal.quadrics3}), the same is true for $B$.

In this case, denote by $\lambda'_A$, $\lambda''_A$ and $\lambda'_B$, $\lambda''_B$ the real solutions, different than $\lambda_0$, of equation (\ref{eq:jednacina}) corresponding to $A$ and $B$ respectively.
Then, if $\lambda_0$ is smaller (resp.\ bigger, between) than $\lambda'_A$, $\lambda''_A$, it is also smaller (resp.\ bigger, between) than $\lambda'_B$, $\lambda''_B$.

On the other hand, if $A$ is not contained in any other quadric from (\ref{eq:confocal.quadrics3}), then the same is true for all points of its connected component.
\end{lemma}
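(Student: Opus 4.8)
The plan is to reduce everything to the analysis of the cubic equation (\ref{eq:polynomial.jacobi}) from the proof of Lemma \ref{lemma:tropic.double}, with $\lambda_0$ replaced by a general value in $(b,a)$. Fix such a $\lambda_0$; since $\mathcal{Q}_{\lambda_0}$ is a one-sheeted hyperboloid oriented along the $y$-axis, for a point $P=(x,y,z)$ on it the remaining two roots $\lambda_1,\lambda_2$ of (\ref{eq:jednacina}) are either both real, both complex conjugate, or coincide. By Proposition \ref{prop:hyp.y.presek}, they coincide precisely when $P$ lies on a light-like tangent to a tropic curve of $\mathcal{Q}_{\lambda_0}$, i.e.\ precisely on a generatrix of $\Sigma^{+}\cup\Sigma^{-}$ lying in $\mathcal{Q}_{\lambda_0}$ (Proposition \ref{prop:tetra.hyp}). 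These generatrices, together with the two tropic curves themselves, partition $\mathcal{Q}_{\lambda_0}$ into connected components; on the closure of each such component the discriminant of the reduced quadratic factor $\lambda^2 + (q_2+\lambda_0)\lambda + (\text{const})$ (after dividing the cubic by $\lambda-\lambda_0$) has a fixed sign, so either $\lambda_1,\lambda_2$ are real throughout a component, or complex throughout.

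The first substantive step is therefore to show that the discriminant $\Delta(P)$ of the quadratic factor is a continuous function on $\mathcal{Q}_{\lambda_0}$ whose zero set is exactly the union of the tropic curves and the light-like tangent generatrices; this is immediate from Proposition \ref{prop:hyp.y.presek} plus the observation that on the tropic curves $\lambda_0$ is itself a double root, so the cubic has discriminant zero there too. Hence on each connected component $\mathcal{O}$ of the complement, $\Delta$ has constant nonzero sign; if that sign is negative, no point of $\mathcal{O}$ lies on any further quadric, proving the last assertion of the lemma; if positive, every point of $\mathcal{O}$ lies on exactly two further real quadrics $\lambda'_A\ne\lambda''_A$, proving the first assertion.

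For the monotonicity/ordering statement, I would argue as follows. On a component $\mathcal{O}$ where $\Delta>0$, the two maps $P\mapsto\lambda'(P)$ and $P\mapsto\lambda''(P)$ (say $\lambda'\le\lambda''$) are continuous, and they are never equal to $\lambda_0$ on $\mathcal{O}$ — indeed $\lambda'=\lambda_0$ or $\lambda''=\lambda_0$ forces $P$ onto a tropic curve by Lemma \ref{lemma:tropic.double}, hence onto the boundary of $\mathcal{O}$. Therefore each of the three continuous quantities $\lambda_0-\lambda'$, $\lambda_0-\lambda''$, and $(\lambda_0-\lambda')(\lambda_0-\lambda'')$ has constant sign on $\mathcal{O}$. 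The sign of the last product is exactly $-q_0/(\text{positive})$ evaluated with the normalization $\lambda_0$; equivalently it records whether $\lambda_0$ lies outside the interval $[\lambda',\lambda'']$ (product positive) or inside it (product negative), and the signs of the first two quantities then pin down "smaller", "bigger", or "between". Since $A$ and $B$ lie in the same $\mathcal{O}$, these signs agree, which is precisely the claimed ordering. The main obstacle, and the place requiring genuine care, is verifying that the zero locus of $\Delta$ on $\mathcal{Q}_{\lambda_0}$ contains \emph{nothing beyond} the tropic curves and the light-like generatrices — in other words that the vanishing of the cubic's discriminant at a point of the hyperboloid which is not on a tropic curve forces a \emph{double} (not triple) root and that this double root configuration is exactly the one described by Proposition \ref{prop:hyp.y.presek}; this is where one must combine Lemma \ref{lemma:tropic.double} (characterizing when $\lambda_0$ is the repeated root) with Proposition \ref{prop:hyp.y.presek} (characterizing when $\lambda_1=\lambda_2$ is the repeated root) to see these are the only two possibilities and they correspond to the two kinds of boundary pieces of the components.

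A completely parallel argument, but simpler because no light-like tangents intervene, handles Lemma \ref{lemma:tropic.non.y}: for $\lambda_0\notin[b,a]\cup\{-c\}$ the quadratic factor has discriminant that vanishes on $\mathcal{Q}_{\lambda_0}$ only along the tropic curves (by Proposition \ref{prop:hyp.y.presek}'s analogue, or directly since such quadrics contain no tangent to their tropic by Proposition \ref{prop:tropic.light}), so off the tropic curves $\Delta>0$ everywhere, giving two distinct further quadrics at every such point; and the same constant-sign argument for $\lambda_0-\lambda'$, $\lambda_0-\lambda''$, $(\lambda_0-\lambda')(\lambda_0-\lambda'')$ on each component bounded by the tropic curves yields the ordering claim.
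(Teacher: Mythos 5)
Your proposal is correct and follows essentially the same route as the paper, whose entire proof is the single observation that the roots of (\ref{eq:jednacina}) vary continuously through space and that two of them coincide exactly on the tropic curves and their light-like tangents, so that the root configuration (real vs.\ complex, and the position of $\lambda_0$ among the real roots) is locally constant off that locus; you simply spell this out via the discriminant of the quadratic cofactor and the sign of $(\lambda_0-\lambda')(\lambda_0-\lambda'')$. One small slip worth noting: the zero set of the cofactor's discriminant is only the light-like generatrices (where $\lambda_1=\lambda_2$), not the tropic curves (where $\lambda_0$ is the repeated root of the cubic while the cofactor's two roots are generically distinct) --- but since both loci lie in the boundary of the components you consider, your constant-sign argument on the open components is unaffected.
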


\begin{proof}
The proof of both Lemmae \ref{lemma:tropic.non.y} and \ref{lemma:tropic.y} follows from the fact that the solutions of (\ref{eq:jednacina}) are continuously changed through the space and that two of the solutions coincide exactly on tropic curves and their light-like tangents.
\end{proof}

\subsection{Generalized Jacobi coordinates and relativistic quadrics in the three-dimensional Minkowski space}
\label{sec:rel3}
\begin{definition}
\emph{Generalized Jacobi coordinates} of point $(x,y,z)$ in the three-di\-men\-sio\-nal Minkowski space $\mathbf{E}^{2,1}$ is the unordered triplet of solutions of equation (\ref{eq:jednacina}).
\end{definition}

Note that any of the following cases may take place:
\begin{itemize}
 \item
generalized Jacobi coordinates are real and different;
 \item
only one generalized Jacobi coordinate is real;
 \item
generalized Jacobi coordinates are real, but two of them coincide;
 \item
all three generalized Jacobi coordinates are equal. 
\end{itemize}

Lemmae \ref{lemma:tropic.non.y} and \ref{lemma:tropic.y} will help us to define relativistic types of quadrics in the $3$-dimensional Minkowski space.
Consider connected components of quadrics from (\ref{eq:confocal.quadrics3}) bounded by tropic curves and, for $1$-sheeted hyperboloids oriented along $y$-axis, their light-light tangent lines.
Each connected component will represent \emph{a relativistic quadric}.

\begin{definition}
A component of quadric $\mathcal{Q}_{\lambda_0}$ is \emph{of relativistic type $E$} if, at each of its points, $\lambda_0$ is smaller than the other two generalized Jacobi coordinates.

A component of quadric $\mathcal{Q}_{\lambda_0}$ is \emph{of relativistic type $H^1$} if, at each of its points, $\lambda_0$ is between the other two generalized Jacobi coordinates.

A component of quadric $\mathcal{Q}_{\lambda_0}$ is \emph{of relativistic type $H^2$} if, at each of its points, $\lambda_0$ is bigger than the other two generalized Jacobi coordinates.

A component of quadric $\mathcal{Q}_{\lambda_0}$ is \emph{of relativistic type $0$} if, at each of its points, $\lambda_0$ is the only real generalized Jacobi coordinate.
\end{definition}

Lemmae \ref{lemma:tropic.non.y} and \ref{lemma:tropic.y} guarantee that types of relativistic quadrics are well-defined, i.e.\ that to each such a quadric a unique type $E$, $H^1$, $H^2$, or $0$ can be assigned.

\begin{definition}
Suppose $(x,y,z)$ is a point of the three-di\-men\-sio\-nal Minkowski space $\mathbf{E}^{2,1}$ where equation \ref{eq:jednacina} has real and different solutions.
\emph{Decorated Jacobi coordinates} of that point is the ordered triplet of pairs:
$$
(E,\lambda_1),\quad
(H^1,\lambda_2),\quad
(H^2,\lambda_3),
$$
of generalized Jacobi coordinates and the corresponding types of relativistic quadrics.
\end{definition}

Now, we are going to analyze the arrangement of the relativistic quadrics.
Let us start with their intersections with the coordinate planes.

\subsubsection*{Intersection with the $xy$-plane}

In the $xy$-plane, the Minkowski metrics is reduced to the Euclidean one.
The family (\ref{eq:confocal.quadrics3}) is intersecting this plane by the following family of confocal conics:
\begin{equation}\label{eq:conf.xy}
\mathcal{C}^{xy}_{\lambda}\ :\ \frac{x^2}{a-\lambda}+\frac{y^2}{b-\lambda}=1.
\end{equation}

We conclude that the $xy$-plane is divided by ellipse $\mathcal{C}^{xy}_{-c}$ into two relativistic quadrics:
\begin{itemize}
 \item 
the region within $\mathcal{C}^{xy}_{-c}$ is a relativistic quadric of $E$-type;
 \item
the region outside this ellipse is of $H^1$-type.
\end{itemize}

Moreover, the types of relativic quadrics intersecting the $xy$-plane are:
\begin{itemize}
 \item
the components of ellipsoids are of $H^1$-type;
 \item
the components of $1$-sheeted hyperboloids oriented along $y$-axis of $H^2$-type;
 \item
the components of $1$-sheeted hyperboloids oriented along $z$-axis of $E$-type.
\end{itemize}

\subsubsection*{Intersection with the $xz$-plane}

In the $xz$-plane, the reduced metrics is the Minkowski one.
The intersection of family (\ref{eq:confocal.quadrics3}) with this plane is the following family of confocal conics:
\begin{equation}\label{eq:conf.xz}
\mathcal{C}^{xz}_{\lambda}\ :\ \frac{x^2}{a-\lambda}+\frac{z^2}{c+\lambda}=1.
\end{equation}

The plane is divided by ellipse $\mathcal{C}^{xz}_{b}$ and the four joint tangents of (\ref{eq:conf.xz}) into $13$ parts:
\begin{itemize}
\item
the part within $\mathcal{C}^{xz}_{b}$ is a relativistic quadric of $H^1$-type;

\item
four parts placed outside of $\mathcal{C}^{xz}_{b}$ that have non-empty intersection with the $x$-axis are of $H^2$-type;

\item
four parts placed outside of $\mathcal{C}^{xz}_{b}$ that have non-empty intersection with the $z$-axis are of $E$-type;

\item
the four remaining parts are of $0$-type and no quadric from the family (\ref{eq:confocal.quadrics3}), except the degenerated $\mathcal{Q}_{b}$, is passing through any of their points.
\end{itemize}

\subsubsection*{Intersection with the $yz$-plane}

As in the previous case, in the $yz$-plane, the reduced metrics is the Minkowski one.
The intersection of family (\ref{eq:confocal.quadrics3}) with this plane is the following family of confocal conics:
\begin{equation}\label{eq:conf.yz}
\mathcal{C}^{yz}_{\lambda}\ :\ \frac{y^2}{b-\lambda}+\frac{z^2}{c+\lambda}=1.
\end{equation}

The plane is divided by hyperbola $\mathcal{C}^{yz}_{a}$ and joint tangents of (\ref{eq:conf.yz}) into $15$ parts:
\begin{itemize}
\item
the two convex parts determined by $\mathcal{C}^{yz}_{a}$ are relativistic quadric of $H^1$-type;

\item
five parts placed outside of $\mathcal{C}^{yz}_{a}$ that have non-empty intersection with the coordinate axes are of $H^2$-type;

\item
four parts, each one placed between $\mathcal{C}^{yz}_{a}$ and one of the joint tangents of (\ref{eq:conf.yz}) are of $E$-type;

\item
through points of the four remaining parts no quadric from the family (\ref{eq:confocal.quadrics3}), 
except the degenerated $\mathcal{Q}_{a}$, is passing.
\end{itemize}

Intersection of relativistic quadrics with the coordinate planes is shown in Figure \ref{fig:relativistic.3d}.
There, the type $E$ quadrics are coloured in dark gray, type $H^1$ medium gray, type $H^2$ light gray, while quadrics of type $0$ are white.
Curves $\mathcal{C}^{xy}_{-c}$, $\mathcal{C}^{xz}_{b}$, $\mathcal{C}^{yz}_{a}$ are also white in the figure.

\begin{figure}[h]
\input{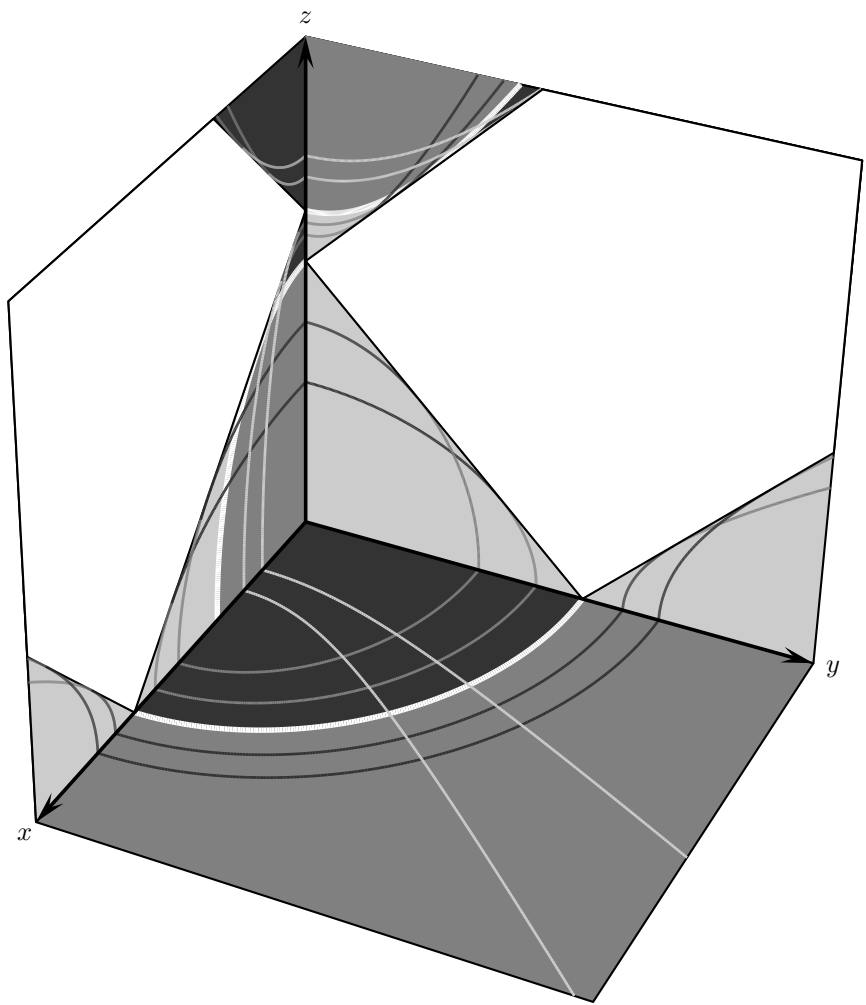}
\caption{Intersection of relativistic quadrics with coordinate planes.}\label{fig:relativistic.3d}
\end{figure}

Let us notice that from the above analysis, using Lemmae \ref{lemma:tropic.non.y} and \ref{lemma:tropic.y}, we can determine the type of each relativistic quadric with a non-empty intersection with some of the coordinate hyper-planes.

\subsubsection*{$1$-sheeted hyperboloids oriented along $z$-axis: $\lambda\in(-\infty,-c)$}
Such a hyperboloid is divided by its tropic curves into three connected components -- two of them are unbounded and mutually symmetric with respect to the $xy$-plane, while the third one is the bounded annulus placed between them.
The two symmetric ones are of $H^1$-type, while the third one is of $E$-type.

\subsubsection*{Ellipsoids: $\lambda\in(-c,b)$}
An ellipsoid is divided by the tropic curves into three bounded connected components -- two of them are mutually symmetric with respect to the $xy$-plane, while the third one is the annulus placed between them.
In this case, the symmetric components represent relativistic quadrics of $E$-type.
The annulus is of $H^1$-type.

\subsubsection*{$1$-sheeted hyperboloids oriented along $y$-axis: $\lambda\in(b,a)$}
The decomposition of those hyperboloids into relativistic quadrics is more complicated and interesting than for the other types of quadrics from (\ref{eq:confocal.quadrics3}).
By its two tropic curves and their eight light-like tangent lines, such a hyperboloid is divided into $28$ connected components:
\begin{itemize}
 \item
two bounded components placed inside the tropic curves are of $H^1$-type;

 \item
four bounded components placed between the tropic curves and light-like tangents, such that they have non-empty intersections with $xz$-plane are of $H^2$-type;

 \item
four bounded components placed between the tropic curves and light-like tangents, such that they have non-empty intersections with $yz$-plane are of $E$-type;

 \item
two bounded components, each limited by four light-like tangents, are of $H^2$-type;

 \item
four unbounded components, each limited by two light-like tangents, such that they have non-empty intersections with the $xy$-plane, are of $H^2$-type;

 \item
four unbounded components, each limited by two light-like tangents, such that they have non-empty intersections with the $yz$-plane, are of $E$-type;

 \item
eight unbounded components, each limited by four light-like tangents, are sets of points not contained in any other quadric from (\ref{eq:confocal.quadrics3}).
\end{itemize}

\subsubsection*{$2$-sheeted hyperboloids: $\lambda\in(a,+\infty)$}
Such a hyperboloid is by its tropic curves divided into four connected components:
two bounded ones are of $H^2$-type, while the two unbounded are of $H^1$-type.

\subsection{Decorated Jacobi coordinates and relativistic quadrics in $d$-di\-men\-sio\-nal pse\-udo-Euclidean space}
\label{sec:reld}
Inspired by results obtained in Sections \ref{sec:rel2} and \ref{sec:rel3},
now we are going to introduce relativistic quadrics and their types in confocal family (\ref{eq:confocal}) in the $d$-dimensional pseudo-Euclidean space $\mathbf{E}^{k,l}$.

\begin{definition}
\emph{Generalized Jacobi coordinates} of point $x$ in the $d$-di\-men\-sio\-nal pseudo-Euclidean space $\mathbf{E}^{k,l}$ is the unordered $d$-tuple of solutions $\lambda$ of equation:
\begin{equation}\label{eq:jednacinad}
\frac{x_1^2}{a_1-\lambda} +\dots+ \frac{x_k^2}{a_k-\lambda} +
\frac{x_{k+1}^2}{a_{k+1}+\lambda} +\dots+
\frac{x_d^2}{a_d+\lambda}=1.
\end{equation} 
\end{definition}

As already mentioned in Section \ref{sec:confocal}, this equation has either $d$ or $d-2$ real solutions.
Besides, some of the solutions may be multiple.

The set $\Sigma_d$ of points $x$ in $\mathbf{R}^d$ where equation (\ref{eq:jednacinad}) has multiple solutions is an algebraic hyper-suface.
$\Sigma_d$ divides each quadric from (\ref{eq:confocal}) into several connected components.
We call these components \emph{relativistic quadrics}.

Since the generalized Jacobi coordinates depend continuosly on $x$, the following definition can be made:
\begin{definition}
We say that a relativistic quadric placed on $\mathcal{Q}_{\lambda_0}$ is \emph{of type $E$} if, at each of its points, $\lambda_0$ is smaller than the other $d-1$ generalized Jacobi coordinates.

We say that a relativistic quadric placed on $\mathcal{Q}_{\lambda_0}$ is \emph{of type $H^i$} $(1<i<d-1)$ if, at each of its points, $\lambda_0$ is greater than other $i$ generalized Jacobi coordinates, and smaller than $d-i-1$ of them.

We say that a relativistic quadric placed on $\mathcal{Q}_{\lambda_0}$ is \emph{of type $0^i$} $(0<i<d-2)$ if, at each of its points, $\lambda_0$ is greater than other $i$ real generalized Jacobi coordinates, and smaller than $d-i-2$ of them.
\end{definition}

It would be interesting to analyze properties of the discriminant manifold $\Sigma_d$, as well as the combinatorial structure of the arrangement of relativistic quadrics, as it is done in Section 
\ref{sec:rel3} for $d=3$.
Remark that this description would have $[d/2]$ substantially different cases in each dimension, depending on choice of $k$ and $l$.

\begin{definition}
Suppose $(x_1,\dots,x_d)$ is a point of the $d$-dimensional Minkowski space $\mathbf{E}^{k,l}$ where equation (\ref{eq:jednacinad}) has real and different solutions.
\emph{Decorated Jacobi coordinates} of that point is the ordered $d$-tuplet of pairs:
$$
(E,\lambda_1),\quad
(H^1,\lambda_2),\quad
\dots,\quad
(H^{d-1},\lambda_d),
$$
of generalized Jacobi coordinates and the corresponding types of relativistic quadrics.
\end{definition}

Since we will consider billiard system within ellipsoids in the pseudo-Euclidean space, it is of interest to analyze behaviour of decorated Jacobi coordinates inside an ellipsoid.

\begin{proposition}\label{prop:svojstva12}
Let $\mathcal{E}$ be ellipsoid in $\mathbf{E}^{k,l}$ given by (\ref{eq:ellipsoid}).
We have:
\begin{itemize}
 \item[PE1]
each point inside $\mathcal{E}$ is the intersection of exactly $d$ quadrics from (\ref{eq:confocal});
moreover, all these quadrics are of different relativistic types;
 \item[PE2]
the types of these quadrics are $E$, $H^1$, \dots, $H^{d-1}$ -- each type corresponds to one of the disjoint intervals of the parameter $\lambda$:
$$(-a_d,-a_{d-1}),\ (-a_{d-1},-a_{d-2}),\ \dots,\ (-a_{k+1},0),\ (0,a_k),\ (a_k,a_{k-1}),\ \dots,\ (a_2,a_1).$$
\end{itemize}
\end{proposition}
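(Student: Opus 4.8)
The plan is to deduce both PE1 and PE2 from Theorem~\ref{th:parametri.kaustike}, together with the elementary counting that underlies the definition of the relativistic types. Fix a point $x=(x_1,\dots,x_d)$ strictly inside $\mathcal{E}$. The quadrics of (\ref{eq:confocal}) through $x$ are exactly the $\mathcal{Q}_\lambda$ for which $\lambda$ solves equation (\ref{eq:jednacinad}), and this set of quadrics does not depend on any choice of line through $x$; so I may use whatever auxiliary line is convenient. I would take the line $x+te_1$: it is space-like, since $\langle e_1,e_1\rangle_{k,l}=1>0$, and it meets $\mathcal{E}$ because the interior of $\mathcal{E}$ is a bounded convex body. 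Applying the space-like case of Theorem~\ref{th:parametri.kaustike} to this line gives that (\ref{eq:jednacinad}) has exactly $d$ distinct real solutions $\lambda_1<\dots<\lambda_d$, one in each of the $d$ intervals listed there. In particular $x$ has $d$ distinct generalized Jacobi coordinates, hence $x\notin\Sigma_d$, so $x$ lies in a genuine relativistic quadric (a connected component of $\mathcal{Q}_{\lambda_m}\setminus\Sigma_d$) for each $m$.

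Next I would rewrite the intervals of Theorem~\ref{th:parametri.kaustike}, which are expressed through the sorted merged family $\{b_i\}\cup\{c_i\}=\{\varepsilon_ia_i\}\cup\{\alpha_j\}$, purely in terms of the $a_i$. From the interlacing recorded in the bullets of that theorem (space-like case) one reads off $b_{2i-1}=a_{k+1-i}$ and $c_{2i-1}=-a_{k+i}$, and that every caustic parameter $\alpha_j$ lies between the two neighbouring values of $\{\varepsilon_ia_i\}$; consequently the two central intervals are exactly $(c_1,0)=(-a_{k+1},0)$ and $(0,b_1)=(0,a_k)$, while $(b_{2i},b_{2i+1})\subseteq(a_{k+1-i},a_{k-i})$ for $1\le i\le k-1$ and $(c_{2j+1},c_{2j})\subseteq(-a_{k+j+1},-a_{k+j})$ for $1\le j\le l-1$. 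This yields an order-preserving bijection between the $d$ intervals of Theorem~\ref{th:parametri.kaustike} and the $d$ intervals
\[
(-a_d,-a_{d-1}),\ \dots,\ (-a_{k+1},0),\ (0,a_k),\ (a_k,a_{k-1}),\ \dots,\ (a_2,a_1)
\]
of PE2, with each of the latter containing exactly one $\lambda_m$; so $\lambda_m$ lies in the $m$-th interval of this list. (The time-like and light-like cases of the theorem are analogous, but are not needed here since the auxiliary line was chosen space-like.)

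It remains to identify the types. Since $\lambda_1<\dots<\lambda_d$ and $\lambda_m$ is the generalized Jacobi coordinate in the $m$-th interval, exactly $m-1$ of the remaining coordinates of $x$ are smaller than $\lambda_m$ and exactly $d-m$ are larger. Comparing with the definitions of Section~\ref{sec:reld}, $\mathcal{Q}_{\lambda_1}$ is of type $E$ at $x$, and $\mathcal{Q}_{\lambda_m}$ is of type $H^{m-1}$ at $x$ for $2\le m\le d$ (taking $i=m-1$, so that $\lambda_m$ exceeds $i$ of the coordinates and is exceeded by $d-i-1=d-m$ of them). Since the generalized Jacobi coordinates depend continuously on $x$, the number of them that are real and lie below a given simple root can change only as $x$ crosses $\Sigma_d$; hence this local type is constant along the connected component of $\mathcal{Q}_{\lambda_m}\setminus\Sigma_d$ containing $x$, and is the well-defined relativistic type of that component, as already set up in Section~\ref{sec:reld}. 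This shows that the $d$ quadrics through $x$ realize the $d$ pairwise distinct types $E,H^1,\dots,H^{d-1}$ (PE1), and that the $m$-th interval of the displayed list corresponds to type $E$ for $m=1$ and to type $H^{m-1}$ for $m\ge2$ (PE2).

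I expect the only delicate step to be the interlacing bookkeeping of the second paragraph: passing from the line-dependent endpoints $b_i,c_i$ of Theorem~\ref{th:parametri.kaustike} to the intrinsic $a_i$-intervals of PE2, and checking that the resulting correspondence is order-preserving and hits each of the $d$ target intervals exactly once. Everything else is a direct unwinding of the definitions and of the continuity of the generalized Jacobi coordinates.
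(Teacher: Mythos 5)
Your argument is correct, but it follows a genuinely different route from the paper's. The paper proves PE1--PE2 directly by analysing the function $\lambda\mapsto A_\lambda x\circ x$ given by the left-hand side of (\ref{eq:jednacinad}): it is monotone with infinite limits on each interval between consecutive poles other than the middle one (hence one root per interval), while on $(-a_{k+1},a_k)$ it tends to $+\infty$ at both ends, has a single minimum, and takes a value below $1$ at $\lambda=0$ for an interior point (hence exactly two roots there, one of each sign). You instead invoke Theorem \ref{th:parametri.kaustike} for an auxiliary space-like line through the point and then translate its line-dependent intervals $(c_{2j+1},c_{2j}),\dots,(0,b_1),\dots$ back into the intrinsic $a_i$-intervals of PE2 via the interlacing data. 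Both work; the paper's argument is shorter and self-contained, whereas yours reuses the heavier Theorem \ref{th:parametri.kaustike} as a black box at the cost of the interlacing bookkeeping (and, in a sense, coarsens back a refinement that the proof of that theorem had itself derived from the same direct function analysis). Your explicit continuity argument for why the local ordering of the coordinates determines a well-defined type on each component of $\mathcal{Q}_{\lambda_m}\setminus\Sigma_d$ is a welcome addition that the paper leaves implicit.

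One small inaccuracy in your second paragraph: the equalities $b_{2i-1}=a_{k+1-i}$, $c_{2i-1}=-a_{k+i}$, and hence $(0,b_1)=(0,a_k)$, $(c_1,0)=(-a_{k+1},0)$, do not follow from the theorem. The interlacing only gives $a_{k+1-i}\in\{b_{2i-1},b_{2i}\}$ and $-a_{k+j}\in\{c_{2j-1},c_{2j}\}$; for instance $b_1=\min\{a_k,\alpha_1\}$ may equal $\alpha_1<a_k$. This is harmless for your proof, since everything downstream uses only the inclusions $(0,b_1)\subseteq(0,a_k)$, $(c_1,0)\subseteq(-a_{k+1},0)$, $(b_{2i},b_{2i+1})\subseteq(a_{k+1-i},a_{k-i})$ and $(c_{2j+1},c_{2j})\subseteq(-a_{k+j+1},-a_{k+j})$, all of which do follow from the interlacing; but the equalities should be replaced by these inclusions.
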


\begin{proof}
The function given by the left-hand side of (\ref{eq:jednacinad}) is continous and strictly monotonous in each interval $(-a_d,-a_{d-1})$, $(-a_{d-1},-a_{d-2})$, \dots, $(-a_{k+2},-a_{k+1})$, $(a_k,a_{k-1})$, \dots, $(a_2,a_1)$ with infinite values at their endpoints.
Thus, equation (\ref{eq:jednacinad}) has one solution in each of them.
On the other hand, in $(-a_{k+1},a_k)$, the function is tending to $+\infty$ at the endpoints, and has only one extreme value -- the minimum.
Since the value of the function for $\lambda=0$ is less than $1$ for a point inside $\mathcal{E}$, it follows that equation (\ref{eq:jednacinad}) will have two solutions in $(-a_{k+1},a_k)$ -- one positive and one negative.
\end{proof}

In Proposition \ref{prop:svojstva12} we proved the relativistic analogs of properties E1, E2 from Section \ref{sec:euclid} for the Euclidean case.

\section{Billiards within quadrics and their periodic trajectories}
\label{sec:periodic}
In this section, we are going to derive first further properties of ellipsoidal billiards in the pseudo-Euclidean spaces.
In Section \ref{sec:billiards} we find in Theorem \ref{th:type} a simple and effective criterion for determining the type of a billiard trajectory, knowing its caustics.
Then we derive properties PE3--PE5 in Propostion \ref{prop:PE3-PE5}.
In Section \ref{sec:cayley} we prove the generalization of Poncelet theorem for ellipsoidal billiards in pseudo-Euclidean spaces and derive the corresponding Cayley-type conditions, giving a complete analytical description of periodic billiard trajectories in arbitrary dimension.
These results are contained in Theorems \ref{th:cayley} and \ref{th:poncelet}.

\subsection{Ellipsoidal billiards}
\label{sec:billiards}
\subsubsection*{Ellipsoidal billiard}

Billiard motion within an ellipsoid in the pseudo-Euclidean space is a motion
which is uniformly straightforward inside the ellipsoid, and obeys
the reflection law on the boundary.
Further, we will consider billiard motion within ellipsoid $\mathcal{E}$, given by equation
(\ref{eq:ellipsoid}) in $\mathbf{E}^{k,l}$.
The family of quadrics confocal with $\mathcal{E}$ is (\ref{eq:confocal}).

Since functions $F_i$ given by (\ref{eq:integralsF}) are integrals
of the billiard motion (see \cites{Mo1980,Audin1994,KhTab2009}), we
have that for each zero $\lambda$  of the equation (\ref{eq:discr}),
the corresponding quadric $\mathcal{Q}_{\lambda}$ is a caustic of
the billiard motion, i.e.\ it is tangent to each segment of the
billiard trajectory passing through the point $x$ with the velocity
vector $v$.

Note that, according to Theorem \ref{th:parametri.kaustike}, for a point placed inside $\mathcal{E}$, there are $d$ real solutions of equation (\ref{eq:jednacinad}). In other words, there are $d$ quadrics from the family (\ref{eq:confocal}) containing such a point, although some of them may be multiple. Also, by Proposition \ref{prop:kaustike} and Theorem \ref{th:parametri.kaustike}, a billiard trajectory within an ellipsoid will always have $d-1$ caustics.

According to Remark \ref{remark:type}, all segments of a billiard trajectory within $\mathcal{E}$ will be of the same type.
Now, we can apply the reasoning from Section \ref{sec:confocal} to billiard trajectories:

\begin{theorem}\label{th:type}
In the $d$-dimensional pseudo-Euclidean space $\mathbf{E}^{k,l}$, consider a billiard trajectory within ellipsoid $\mathcal{E}=\mathcal{Q}_0$, and let quadrics
$\mathcal{Q}_{\alpha_1}$, \dots, $\mathcal{Q}_{\alpha_{d-1}}$ from the family (\ref{eq:confocal}) be its caustics.
Then all billiard trajectories within $\mathcal{E}$ sharing the same caustics are of the same type: space-like, time-like, or light-like, as the initial trajectory.
Moreover, the type is determined as follows:
\begin{itemize}
\item
if $\infty\in\{\alpha_1,\dots,\alpha_{d-1}\}$, the trajectories are light-like;
\item
if $(-1)^l\cdot\alpha_1\cdot\dotsc\cdot\alpha_{d-1}>0$, the trajectories are space-like;
\item
if $(-1)^l\cdot\alpha_1\cdot\dotsc\cdot\alpha_{d-1}<0$, the trajectories are time-like.
\end{itemize}
\end{theorem}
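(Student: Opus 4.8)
\emph{Plan of proof.} The idea is to reduce everything to Theorem~\ref{th:parametri.kaustike} together with Remark~\ref{remark:type}. By Remark~\ref{remark:type}, all segments of a billiard trajectory within $\mathcal{E}$ carry lines of the same type, so a trajectory has a well-defined type, equal to the sign of $\langle v,v\rangle_{k,l}$ for a direction vector $v$ of any of its segments (positive $=$ space-like, negative $=$ time-like, zero $=$ light-like). Hence it suffices to read off this sign from the caustic parameters $\alpha_1,\dots,\alpha_{d-1}$; once that is done, trajectories sharing the same caustics automatically share the same type. Theorem~\ref{th:parametri.kaustike} applies, since the line $\ell$ carrying a segment is a chord of $\mathcal{E}$ and thus intersects $\mathcal{E}$, and the $\alpha_i$ there are exactly the caustic parameters.

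For the light-like case: $\ell$ is light-like precisely when $\langle v,v\rangle_{k,l}=0$, which by the discussion around (\ref{eq:polinom.P}) means that $\mathcal{P}$ has degree $<d-1$, i.e.\ that $\infty$ occurs among the parameters of the touched quadrics. Indeed Theorem~\ref{th:parametri.kaustike} gives $\alpha_k=\infty$ in the light-like case, and conversely if $\infty\in\{\alpha_1,\dots,\alpha_{d-1}\}$ the leading coefficient of $\mathcal{P}$ vanishes, forcing $\langle v,v\rangle_{k,l}=0$. This settles the first bullet.

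Now assume $\ell$ is not light-like, so all $\alpha_i$ are finite; moreover no $\alpha_i$ equals $0$, since for a point $x$ of the trajectory lying strictly inside $\mathcal{E}=\mathcal{Q}_0$ the expression in (\ref{eq:diskriminanta}) at $\lambda=0$ equals $(A_0x\circ v)^2-(A_0v\circ v)(A_0x\circ x-1)>0$ (because $A_0v\circ v>0$ and $A_0x\circ x<1$), so $0$ is not a root of $\mathcal{P}$. In the notation of Theorem~\ref{th:parametri.kaustike}, among the $2d-1$ numbers $\varepsilon_1a_1,\dots,\varepsilon_da_d,\alpha_1,\dots,\alpha_{d-1}$ there are $q$ negative ones, while exactly $l$ of the numbers $\varepsilon_ia_i$ are negative (namely $-a_{k+1},\dots,-a_d$); hence exactly $q-l$ of the $\alpha_i$ are negative, and $\sign(\alpha_1\cdots\alpha_{d-1})=(-1)^{q-l}$. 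In the space-like case $q=2l$, so $\sign(\alpha_1\cdots\alpha_{d-1})=(-1)^l$ and $(-1)^l\alpha_1\cdots\alpha_{d-1}>0$; in the time-like case $q=2l-1$, so $\sign(\alpha_1\cdots\alpha_{d-1})=(-1)^{l-1}$ and $(-1)^l\alpha_1\cdots\alpha_{d-1}<0$. Since the three types partition all trajectories within $\mathcal{E}$, and the three conditions ($\infty$ a caustic; $(-1)^l\prod\alpha_i>0$; $(-1)^l\prod\alpha_i<0$) partition the caustic configurations that actually arise, the implications just proved are in fact equivalences, which is the assertion.

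The only delicate part is the sign bookkeeping in the last paragraph, but it is entirely subsumed in Theorem~\ref{th:parametri.kaustike}; there is no genuine obstacle. One could also argue directly by writing $\mathcal{P}(\lambda)=C\prod_{i=1}^{d-1}(\lambda-\alpha_i)$: the leading coefficient $C$ has the same sign as $(-1)^{k-1}\langle v,v\rangle_{k,l}$ (this is the sign of $\mathcal{R}(+\infty)$ computed in the proof of Theorem~\ref{th:parametri.kaustike}, since $\mathcal{P}$ has the same leading coefficient as $\mathcal{R}$), and evaluating at $\lambda=0$, using $\mathcal{P}(0)=\big[(A_0x\circ v)^2-(A_0v\circ v)(A_0x\circ x-1)\big]\prod_i a_i>0$, yields $(-1)^l\langle v,v\rangle_{k,l}\,\alpha_1\cdots\alpha_{d-1}>0$, the same conclusion.
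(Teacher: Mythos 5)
Your proof is correct, but it takes a genuinely different route from the paper's. The paper argues by deformation: it observes that $\sum_i F_i(x,v)=\langle v,v\rangle_{k,l}$ is a conserved quantity whose sign is the type, notes that the product $\alpha_1\cdots\alpha_{d-1}$ varies continuously on the variety of lines meeting $\mathcal{E}$ and can change sign only by a parameter passing through $\infty$ (i.e.\ only across the light-like locus), and then anchors the sign convention on a concrete family: lines in the coordinate subspace $\mathbf{E}^k\times\mathbf{0}^l$, which are space-like and have the degenerate caustics $\mathcal{Q}_{-a_{k+1}},\dots,\mathcal{Q}_{-a_d}$ together with positive remaining caustic parameters. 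You instead read the sign off directly from the interlacing data of Theorem~\ref{th:parametri.kaustike}: exactly $q-l$ of the $\alpha_i$ are negative, and $q=2l$ or $2l-1$ according to the type, which immediately gives $\sign\prod\alpha_i=(-1)^l$ or $(-1)^{l-1}$. Your counting argument is more self-contained and avoids the connectedness issues implicit in the paper's continuity argument (one would need every component of the space of space-like chords to contain an anchor line); the paper's argument, on the other hand, makes transparent \emph{why} the sign of the product detects the type (a caustic parameter crossing $\infty$ is exactly the light-like transition). Your closing alternative via $\mathcal{P}(\lambda)=C\prod(\lambda-\alpha_i)$, $\mathcal{P}(0)>0$ and $\sign C=(-1)^{k-1}\sign\langle v,v\rangle_{k,l}$ is also sound and is in fact closest in spirit to the paper's first paragraph, though the claim that $\mathcal{P}$ and $\mathcal{R}$ share a leading coefficient deserves the one-line verification (the top-degree part of the discriminant comes from the term $-(A_\lambda v\circ v)(A_\lambda x\circ x-1)\sim A_\lambda v\circ v$). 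No gaps.
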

\begin{proof}
Since values of functions $F_i$ given by (\ref{eq:integralsF}) are preserved by the billiard reflection and
$$
\sum_{i=1}^dF_i(x,v) = \langle{v,v}\rangle_{k,l},
$$
the type of the billiard trajectory depends on the sign of the sum
$\sum_{i=1}^dF_i(x,v)$.
From the equivalence of relations (\ref{eq:discr}) and (\ref{eq:polinom.P}), it follows that the sum depends only of the roots of $\mathcal{P}$, i.e.\ of parameters $\alpha_1$, \dots, $\alpha_{d-1}$ of the caustics.

Notice that the product $\alpha_1\cdot\dotsc\cdot\alpha_{d-1}$ is changed continuously on the variety of lines in $\mathbf{E}^{k,l}$ that intersect $\mathcal{E}$, with infinite singularities at light-like lines.
Besides, the subvariety of light-like lines divides the variety of all lines into subsets of space-like and time-like ones.
When passing through light-like lines, one of parameters $\alpha_i$ will pass through the infinity from positive to the negative part of the reals or vice versa;
thus, a change of sign of the product occurs simultaneously with a change of the type of line.

Now, take $\alpha_{j}=-a_{k+j}$ for $1\le j\le l$, and notice that all lines placed in the $k$-dimensional coordinate subspace $\mathbf{E}^k\times\mathbf{0}^l$ will have the corresponding degenerate caustics.
The reduced metrics is Euclidean in this subspace, thus such lines are space-like.
Since $\alpha_1$, \dots, $\alpha_k$ are positive for those lines of $\mathbf{E}^k\times\mathbf{0}^l$ that intersect $\mathcal{E}$, the statement is proved.
\end{proof}

Let us note that, in general, for the fixed $d-1$ quadrics from the confocal family, there can be found joint tangents of different types, which makes Theorem \ref{th:type} in a way unexpected. 
However, it turns out that, with fixed caustics, only lines having one type may have intersection with a given ellipsoid --- and only these lines give rise to billiard trajectories.

Next, we are going to investigate the behaviour of decorated Jacobi coordinates along ellipsoidal billiard trajectories.

\begin{proposition}\label{prop:PE3-PE5}
Let $\mathcal{T}$ be a trajectory of the billiard within ellipsoid $\mathcal{E}$ in pseudo-Euclidean space $\mathbf{E}^{k,l}$.
Denote by $\alpha_1$, \dots, $\alpha_{d-1}$ the parameters of the caustics from the confocal family (\ref{eq:confocal}) of $\mathcal{T}$, and take $b_1$, \dots, $b_p$, $c_1$, \dots, $c_q$ as in Theorem \ref{th:parametri.kaustike}.
Then we have:
\begin{itemize}
 \item[PE3]
along $\mathcal{T}$, each generalized Jacobi coordinate takes values in exactly one of the segments:
$$
[c_{2l-1},c_{2l-2}],\ \dots,\ [c_2,c_1],\ [c_1,0],\ [0,b_1],\ [b_2,b_3],\ \dots,\ [b_{2k-2},b_{2k-1}];
$$

 \item[PE4]
along $\mathcal{T}$, each generalized Jacobi coordinate can achieve local minima and maxima only at touching points with corresponding caustics, intersection points with corresponding coordinate hyper-planes, and at reflection points;

 \item[PE5]
values of generalized Jacobi coordinates at critical points are $0$, $b_1$, \dots, $b_{2k-1}$, $c_1$, \dots, $c_{2l-1}$;
between the critical points, the coordinates are changed monotonously.
\end{itemize}
\end{proposition}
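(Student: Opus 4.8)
The plan is to follow the architecture of the Euclidean proof of E3--E5, with Theorem~\ref{th:parametri.kaustike} playing the role that Chasles' theorem plays in the Euclidean case. Fix one segment of $\mathcal{T}$ and write it as $x(t)=x_0+tv$ with $x(t)$ inside $\mathcal{E}$. By Theorem~\ref{th:parametri.kaustike}, for every such $t$ the equation $A_\lambda x(t)\circ x(t)=1$ has exactly $d$ distinct real roots (one of them $+\infty$ in the light-like case), one in each of the $d$ intervals listed there; hence the generalized Jacobi coordinates can be labelled $\lambda_1(t),\dots,\lambda_d(t)$ according to those intervals and are continuous along $\mathcal{T}$. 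Two facts from Theorem~\ref{th:parametri.kaustike} form the combinatorial backbone: each caustic parameter $\alpha_j$ equals an \emph{endpoint} of exactly one of the $d$ intervals, the remaining endpoints being the numbers $\varepsilon_ia_i$ and $0$; and consecutive intervals among the $d$ listed are separated by \emph{gap} intervals that contain no root at all.

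The analytic input is the derivative of a Jacobi coordinate along the segment. Differentiating $A_{\lambda_i(t)}x(t)\circ x(t)=1$ in $t$ gives
\begin{equation*}
\dot\lambda_i(t)=-\,\frac{2\,A_{\lambda_i}x(t)\circ v}{\sum_{m=1}^{d}\varepsilon_m\,x_m(t)^2/(a_m-\varepsilon_m\lambda_i)^2},
\end{equation*}
the denominator being non-zero because $\lambda_i(t)$ is a simple root. Since $x(t)\in\mathcal{Q}_{\lambda_i(t)}$, the numerator $A_{\lambda_i(t)}x(t)\circ v$ vanishes exactly when the line meets $\mathcal{Q}_{\lambda_i(t)}$ in a double point at $x(t)$, i.e.\ exactly when $\lambda_i(t)\in\{\alpha_1,\dots,\alpha_{d-1}\}$. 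Because no $\alpha_j$ lies strictly inside any of the $d$ intervals, and because (again by Theorem~\ref{th:parametri.kaustike}) the $\lambda_i$ never coincide and never enter a gap along $\mathcal{T}$, each $\lambda_i$ remains in the closure of its own interval for all time --- this is PE3 --- and $\dot\lambda_i\neq0$ whenever $\lambda_i$ is in the interior of its interval, so $\lambda_i$ is strictly monotone there. Consequently, whenever $\lambda_i$ reaches an endpoint of its interval it must turn back, hence attains a local extremum there, whose value is the corresponding boundary value, one of the numbers $0,b_1,\dots,b_{2k-1},c_1,\dots,c_{2l-1}$.

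It remains to identify these endpoint events, and there are three kinds. (i) $\lambda_i(t_0)=\alpha_j$: here $\dot\lambda_i(t_0)=0$ and $x(t_0)$ is the point where the segment touches the caustic $\mathcal{Q}_{\alpha_j}$. (ii) $\lambda_i(t_0)=\varepsilon_ja_j$: since the term $x_j^2/(a_j-\varepsilon_j\lambda)$ of the Jacobi equation blows up there unless $x_j(t_0)=0$, this is a crossing of the coordinate hyper-plane $x_j=0$; isolating that term shows that near $t_0$ the quantity $\lambda_i(t)-\varepsilon_ja_j$ is, to leading order, a non-zero constant times $x_j(t)^2=v_j^2(t-t_0)^2$, so $\lambda_i$ has a local extremum of value $\varepsilon_ja_j$ there. (iii) At a reflection point, $x(t_0)\in\mathcal{E}=\mathcal{Q}_0$, so exactly one coordinate equals $0$; writing the reflected velocity as $v'=v-2\,\langle v,n\rangle_{k,l}\,\langle n,n\rangle_{k,l}^{-1}\,n$ with $n=E_{k,l}A_0x(t_0)$ pseudo-orthogonal to $T_{x(t_0)}\mathcal{E}$, a partial-fraction computation using $A_{\lambda_i}x(t_0)\circ x(t_0)=A_0x(t_0)\circ x(t_0)=1$ gives $A_{\lambda_i}x(t_0)\circ n=0$ for every $\lambda_i\neq0$, together with $A_0x(t_0)\circ n=\langle n,n\rangle_{k,l}$ and $\langle v,n\rangle_{k,l}=A_0x(t_0)\circ v$; hence $A_{\lambda_i}x(t_0)\circ v'=A_{\lambda_i}x(t_0)\circ v$ for every $\lambda_i\neq0$ while $A_0x(t_0)\circ v'=-A_0x(t_0)\circ v$, so the derivative of every coordinate with $\lambda_i\neq0$ is continuous across the reflection and only the coordinate equal to $0$ reverses direction. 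Cases (i)--(iii) exhaust the ways in which $\dot\lambda_i$ can vanish, which is PE4, and between consecutive such instants $\lambda_i$ is strictly monotone, which completes PE5. The step I expect to require the most care is (iii), the sign computation at reflection points, where the pseudo-Euclidean reflection law must be matched against the confocal structure; it also explains why, unlike the Euclidean case in which only the last Jacobi coordinate has extrema at reflections, here it is precisely the coordinate passing through $0$ that does, and its relativistic type depends on where the reflection point lies.
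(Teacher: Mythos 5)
Your proposal is correct and follows the same architecture as the paper's proof: PE3 is read off from Theorem~\ref{th:parametri.kaustike}, and PE4--PE5 come from continuity of the generalized Jacobi coordinates together with monotonicity at transversal intersections, so that critical points occur only at caustic tangencies, coordinate hyper-plane crossings, and reflection points. You simply make explicit what the paper asserts --- the derivative formula for $\dot\lambda_i$, the quadratic local behaviour at a hyper-plane crossing, and the computation showing the pseudo-Euclidean reflection reverses only the derivative of the coordinate equal to $0$ --- and these verifications are all correct.
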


\begin{proof}
Property PE3 follows from Theorem \ref{th:parametri.kaustike}.
Along each line, the generalized Jacobi coordinates are changed continuously.
Moreover, they are monotonous at all points where the line has a transversal intersection with a non-degenerate quadric.
Thus, critical points on a line are exactly touching points with corresponding caustics and intersection points with corresponding coordinate hyper-planes.

Note that reflection points of $\mathcal{T}$ are also points of transversal intersection with all quadrics containing those points, except with $\mathcal{E}$.
Thus, at such points, $0$ will be a critical value of the corresponding generalized Jacobi coordinate, and all other coordinates are monotonous.
This proves PE4 and PE5.
\end{proof}

The properties we obtained are pseudo-Euclidean analogs of properties E3--E5 from Section \ref{sec:euclid}, which are true for ellipsoidal billiards in Euclidean spaces.

\subsection{Analytic conditions for periodic trajectories}
\label{sec:cayley}
%\subsubsection*{Periodic trajectories}

Now, we are going to derive the corresponding analytic conditions of Cayley's type for periodic trajectories of the ellipsoidal billiard in the pseudo-Euclidean space, and therefore to obtain the generalization of the Poncelet theorem to pseudo-Euclidean spaces.

\begin{theorem}[Generalized Cayley-type conditions]\label{th:cayley}
In the pseudo-Euclidean space $\mathbf{E}^{k,l}$ ($k+l=d$), consider a billiard trajectory $\mathcal{T}$ within ellipsoid $\mathcal{E}$ given by equation (\ref{eq:ellipsoid}).
Let $\mathcal{Q}_{\alpha_1}$, \dots, $\mathcal{Q}_{\alpha_{d-1}}$ from confocal family (\ref{eq:confocal}) be caustics of $\mathcal{T}$.

Then $\mathcal{T}$ is periodic with period $n$ if and only if the following condition is satisfied:
$$
\rank\left(
\begin{array}{llll}
B_{d+1} & B_{d+2} & \dots & B_{m+1}\\
B_{d+2} & B_{d+3} & \dots & B_{m+2}\\
\dots & \dots & \dots & \dots\\
B_{d+m-1} & B_{d+m} & \dots & B_{2m-1}
\end{array}
\right) < m-d+1,\quad\text{for}\ n=2m;
$$
$$
\rank\left(
\begin{array}{llll}
B_{d} & B_{d+1} & \dots & B_{m+1}\\
B_{d+1} & B_{d+2} & \dots & B_{m+2}\\
\dots & \dots & \dots & \dots\\
B_{d+m-1} & B_{d+m} & \dots & B_{2m}
\end{array}
\right) < m-d+2,\quad\text{for}\ n=2m+1.
$$
Here,
$$
\sqrt{(\alpha_1-\lambda)\cdot\ldots\cdot(\alpha_{d-1}-\lambda)\cdot(a_1-\varepsilon_1\lambda)\cdot\ldots\cdot(a_d-\varepsilon_d\lambda)}
= B_0 + B_1\lambda + B_2\lambda^2 + \dots
$$
is the Taylor expansion around $\lambda=0$.
\end{theorem}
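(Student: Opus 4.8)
\emph{Overview of the strategy.} The plan is to linearise the billiard flow on the Jacobian of an auxiliary hyperelliptic curve, translate $n$-periodicity into an $n$-torsion condition, and then extract the displayed rank conditions by the Cayley--Griffiths--Harris elimination in the Taylor coefficients $B_\ell$ (cf.\ \cites{Cayley1853,Cayley1854,GrifHar1978}). Attach to $\mathcal{T}$ the hyperelliptic curve
\[
\Gamma:\quad y^2=(\alpha_1-\lambda)\cdots(\alpha_{d-1}-\lambda)\,(a_1-\varepsilon_1\lambda)\cdots(a_d-\varepsilon_d\lambda),
\]
which has degree $2d-1$ and genus $d-1$ when $\mathcal{T}$ is space-like or time-like; when $\mathcal{T}$ is light-like one of the $\alpha_j$ equals $\infty$, the corresponding factor is omitted, and $\Gamma$ has even degree $2(d-1)$, genus $d-2$, and two points $\infty_\pm$ at infinity. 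The polynomial under the radical in the statement is exactly the defining polynomial $P(\lambda)$ of $\Gamma$, and $P(0)\neq0$ since $\mathcal{E}=\mathcal{Q}_0$ is not a caustic, so the expansion $\sqrt{P(\lambda)}=B_0+B_1\lambda+\dots$ around $\lambda=0$ is legitimate.

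\emph{Step 1: linearisation.} Using that the functions $F_i$ of \eqref{eq:integralsF} are commuting integrals of the billiard motion (as in the Euclidean case, see \cites{Mo1980,Audin1994,KhTab2009}), the ordered sequence of arcs of $\mathcal{T}$ is sent by an Abel--Jacobi map into $\Jac(\Gamma)$ in such a way that one billiard reflection off $\mathcal{E}=\mathcal{Q}_0$ corresponds to translation by a fixed divisor class $\mathfrak{p}=[E_+-\infty]$, where $E_\pm$ are the two points of $\Gamma$ over $\lambda=0$ and $\infty$ is the point at infinity (respectively $[\infty_+-\infty_-]$ in the light-like case). Here the monotonicity properties PE3--PE5 of Proposition \ref{prop:PE3-PE5}, together with Theorem \ref{th:type}, are precisely what make this Abel--Jacobi map well defined and single-valued along $\mathcal{T}$; one also checks that the signs $\varepsilon_i$ do not obstruct the construction over $\mathbf{C}$. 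Consequently $\mathcal{T}$ is periodic with period $n$ if and only if $n\mathfrak{p}=0$ in $\Jac(\Gamma)$.

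\emph{Step 2: torsion as existence of a function, and the parity split.} The hyperelliptic involution gives $E_++E_-\sim 2\infty$, hence $[E_+-E_-]=2\mathfrak{p}$. Therefore, for $n=2m$ the condition $n\mathfrak{p}=0$ is equivalent to $m[E_+-E_-]=0$, i.e.\ to the existence of a meromorphic function $f$ on $\Gamma$ with divisor $(f)=m(E_+)-m(E_-)$; for $n=2m+1$ it is equivalent to the existence of $f$ with $(f)=(2m+1)(\infty)-(2m+1)(E_+)$. In each case write $f=(u(\lambda)+y\,v(\lambda))/\lambda^{r}$, with $r=m$ in the even case and $r=2m+1$ in the odd case. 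Regularity of $f$ at infinity bounds $\deg u$ and $\deg v$ (one gets $\deg u\le m$ in both cases, and $\deg v\le m-d$, resp.\ $\deg v\le m-d+1$), while the prescribed order of vanishing at $E_+$ (resp.\ at $\infty$) says that $u(\lambda)+\sqrt{P(\lambda)}\,v(\lambda)$, a power series at $\lambda=0$, vanishes to the required order. The vanishing conditions of low order determine the coefficients of $u$ from those of $v$ by truncation, and the residual conditions become a homogeneous linear system in the coefficients of $v$ whose coefficient matrix is exactly the Hankel matrix of the $B_\ell$ displayed in the statement. A nonzero $v$ — hence a nonzero $f$ — exists if and only if that matrix has rank strictly less than its number of columns, which gives $\rank<m-d+1$ for $n=2m$ and $\rank<m-d+2$ for $n=2m+1$. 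Checking $d=2$ recovers the classical Cayley determinant for the planar ellipsoidal billiard, a useful consistency test.

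\emph{Main obstacle.} The delicate point is Step 1 in the pseudo-Euclidean setting: the confocal family has $d+1$ geometric types of quadrics, the caustics $\mathcal{Q}_{\alpha_i}$ may be self-intersecting and of the exotic relativistic types studied in Section \ref{sec:relativistic}, and one must verify that the Jacobi-type coordinates from Section \ref{sec:periodic} still yield a bona fide Abel--Jacobi parametrisation with the reflection acting as translation by the \emph{same} class $\mathfrak{p}=[E_+-\infty]$ regardless of the type of $\mathcal{T}$. One also has to treat the light-like case, where $\Gamma$ degenerates to even degree and $\infty$ splits into $\infty_\pm$, making sure the Cayley extraction goes through with the expansion of $\sqrt{\prod_{\alpha_j\neq\infty}(\alpha_j-\lambda)\prod_i(a_i-\varepsilon_i\lambda)}$ and reproduces the same matrices. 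Granting the linearisation, Steps 2 and the final linear-algebra count are routine.
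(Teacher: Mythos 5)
Your proposal follows essentially the same route as the paper: linearisation of the billiard flow via the Abel--Jacobi map on the hyperelliptic curve $\mu^2=\mathcal{P}_1(\lambda)$ (justified by Theorem \ref{th:type} and properties PE3--PE5 of Proposition \ref{prop:PE3-PE5}), reduction of $n$-periodicity to the torsion condition $n\mathcal{A}(P_0)\equiv 0$ on the Jacobian, and then the standard Cayley--Griffiths--Harris elimination yielding the Hankel rank conditions, a step the paper outsources to an earlier reference and you carry out explicitly. The details you supply (degree and pole counts for $u$, $v$, and the parity split) are correct and consistent with the stated matrices.
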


\begin{proof}
Denote:
$$
\mathcal{P}_1(\lambda)=(\alpha_1-\lambda)\cdot\ldots\cdot(\alpha_{d-1}-\lambda)\cdot(a_1-\varepsilon_1\lambda)\cdot\ldots\cdot(a_d-\varepsilon_d\lambda).
$$
Following Jacobi \cite{JacobiGW}, along a given billiard trajectory, we consider the integrals:
\begin{equation}\label{eq:int}
\sum_{s=1}^d\int\frac{d\lambda_s}{\sqrt{\mathcal{P}_1(\lambda_s)}},
\quad
\sum_{s=1}^d\int\frac{\lambda_{s}d\lambda_s}{\sqrt{\mathcal{P}_1(\lambda_s)}},
\quad\dots,\quad
\sum_{s=1}^d\int\frac{\lambda_{s}^{d-2}d\lambda_s}{\sqrt{\mathcal{P}_1(\lambda_s)}}.
\end{equation}
By PE3 of Proposition \ref{prop:PE3-PE5}, we may suppose that:
\begin{gather*}
\lambda_1\in[0,b_1],\ \lambda_i\in[b_{2i-2},b_{2i-1}]\ \text{for}\ 2\le i\le k;
 \\
\lambda_{k+1}\in[c_1,0],\ \lambda_{k+j}\in[c_{2j-1},c_{2j-2}]\ \text{for}\ 2\le j\le l.
\end{gather*}
Along a billiard trajectory, by PE4 and PE5 of Proposition \ref{prop:PE3-PE5}, each $\lambda_{s}$ will pass through the corresponding interval monotonously from one endpoint to another and vice versa alternately.
Notice also that values $b_1$, \dots, $b_{2k-1}$, $c_1$, \dots, $c_{2l-1}$ correspond to the Weierstrass points of hyper-elliptic curve: 
\begin{equation}\label{eq:curve}
\mu^2=\mathcal{P}_1(\lambda).
\end{equation}
Thus, calculating integrals (\ref{eq:int}), we get that the billiard trajectory is closed after $n$ reflections if and only if
$$
n\mathcal{A}(P_0)\equiv0
$$
on the Jacobian of curve (\ref{eq:curve}).
Here, $\mathcal{A}$ is the Abel-Jacobi map, and $P_0$ is the point on the curve corresponding to $\lambda=0$.
Further, in the same manner as in \cite{DragRadn1998b}, we obtain the conditions as stated in the theorem.
\end{proof}

As an immediate consequence, we get:

\begin{theorem}[Generalized Poncelet theorem]\label{th:poncelet}
In pseudo-Euclidean space $\mathbf{E}^{k,l}$ ($k+l=d$), consider a billiard trajectory $\mathcal{T}$ within ellipsoid $\mathcal{E}$.

If $\mathcal{T}$ is periodic and become closed after $n$ reflections on the ellipsoid, then any other trajectory within $\mathcal{E}$ having the same caustics as $\mathcal{T}$ is also periodic with period $n$.
\end{theorem}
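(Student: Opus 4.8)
The plan is to read off Theorem~\ref{th:poncelet} directly from the Cayley-type criterion of Theorem~\ref{th:cayley}. The key observation is that the condition given there for $n$-periodicity of a billiard trajectory $\mathcal{T}$ within $\mathcal{E}$ is a rank condition on a matrix whose entries are the Taylor coefficients $B_j$ of $\sqrt{\mathcal{P}_1(\lambda)}$ about $\lambda=0$, where
\[
\mathcal{P}_1(\lambda)=(\alpha_1-\lambda)\cdot\ldots\cdot(\alpha_{d-1}-\lambda)\cdot(a_1-\varepsilon_1\lambda)\cdot\ldots\cdot(a_d-\varepsilon_d\lambda).
\]
This polynomial is assembled solely from the parameters $a_1,\dots,a_d$ of the ambient confocal family and the parameters $\alpha_1,\dots,\alpha_{d-1}$ of the caustics of $\mathcal{T}$; it encodes no information about the particular trajectory beyond its caustics. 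Consequently the numbers $B_0,B_1,B_2,\dots$, and hence the entire rank condition, are the same for every billiard trajectory inside $\mathcal{E}$ that has $\mathcal{Q}_{\alpha_1},\dots,\mathcal{Q}_{\alpha_{d-1}}$ as caustics.

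Concretely, I would first invoke Theorem~\ref{th:type} and the discussion preceding it to note that any trajectory $\mathcal{T}'$ within $\mathcal{E}$ sharing the caustics of $\mathcal{T}$ is a genuine billiard trajectory of the same (space-like, time-like, or light-like) character, so that Theorem~\ref{th:cayley} applies to it verbatim with the same polynomial $\mathcal{P}_1$. Then I would apply Theorem~\ref{th:cayley} to both $\mathcal{T}$ and $\mathcal{T}'$: since the coefficients $B_j$ coincide, the rank condition holds for $\mathcal{T}'$ with period $n$ if and only if it holds for $\mathcal{T}$ with period $n$. By hypothesis $\mathcal{T}$ closes after $n$ reflections, so $\mathcal{T}'$ does as well. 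In the light-like case, where $\infty\in\{\alpha_1,\dots,\alpha_{d-1}\}$, the same reasoning goes through with the convention (as in Theorem~\ref{th:cayley} and Section~\ref{sec:confocal}) that the corresponding factor is dropped and $\deg\mathcal{P}_1$ decreases by one accordingly; this too depends only on the shared caustics.

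The only genuine content that has to be checked — and it is already carried out inside the proof of Theorem~\ref{th:cayley} — is that closure of a trajectory after $n$ reflections is equivalent to the relation $n\mathcal{A}(P_0)\equiv 0$ on the Jacobian of the hyperelliptic curve $\mu^2=\mathcal{P}_1(\lambda)$, where $P_0$ is the point over $\lambda=0$. Both the curve and the point $P_0$ are determined by the caustics alone (the curve by $\mathcal{P}_1$, the point by the branch over $\lambda=0$, i.e.\ the parameter of $\mathcal{E}$), and PE3--PE5 of Proposition~\ref{prop:PE3-PE5} guarantee that the generalized Jacobi coordinates sweep the same arcs between the same Weierstrass points for every trajectory with these caustics. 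Hence the Abel--Jacobi condition, and therefore periodicity, is a property of the caustics and not of the individual trajectory. I do not expect a serious obstacle here: the statement is essentially a restatement of the trajectory-independence already visible in the hypotheses of Theorem~\ref{th:cayley}, the only mild care being the bookkeeping of the light-like case and the appeal to Theorem~\ref{th:type} to ensure that ``another trajectory with the same caustics'' is neither vacuous nor of an ambiguous type.
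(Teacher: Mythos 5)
Your proposal is correct and follows essentially the same route as the paper, which derives Theorem~\ref{th:poncelet} as an immediate consequence of Theorem~\ref{th:cayley}: the rank condition there depends only on the caustic parameters $\alpha_1,\dots,\alpha_{d-1}$ and the $a_i$, hence is shared by all trajectories with the same caustics. Your additional remarks on the light-like case and the Abel--Jacobi formulation are consistent with the paper's treatment and add nothing contradictory.
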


\begin{remark}
The generalization of the Full Poncelet theorem from \cite{CCS1993} to pseudo-Euclidean spaces is obtained in \cite{WFSWZZ2009}.
However, only space-like and time-like trajectories were discussed there.

A Poncelet-type theorem for light-like geodesics on the ellipsoid in the three-dimensional Minkowski space is proved in \cite{GKT2007}.
\end{remark}

\begin{remark}
Theorems \ref{th:cayley} and \ref{th:poncelet} will also hold in symmetric and degenerated cases, that is when some of the parameters $\varepsilon_i a_i$, $\alpha_{j}$ concide, or in the case of light-like trajectories, when $\infty\in\{\alpha_{j}\mid1\le j\le d-1\}$.
In such cases, we need to apply the desingularisation of the corresponding curve, as explained in detail in our works \cite{DragRadn2006,DragRadn2008}.

When we consider light-like trajectories, then the factor containing the infinite parameter is ommited from polynomial $\mathcal{P}_1$.
\end{remark}

\begin{example}
Let us find all $4$-periodic trajectories within ellipse $\mathcal{E}$ given by
(\ref{eq:ellipse}) in the Minkowski plane, i.e.~ all conics $\mathcal{C}_{\alpha}$ from the confocal family (\ref{eq:confocal.conics}) corresponding to such trajectories.

By Theorem \ref{th:cayley}, the condition is $B_3=0$, with
$$
\sqrt{(a-\lambda)(b+\lambda)(\alpha-\lambda)}=B_0+B_1\lambda+B_2\lambda^2+B_3\lambda^3+\dots
$$
being the Taylor expansion around $\lambda=0$.
Since
$$
B_3=\frac{(-ab-a\alpha+b\alpha)(-ab+a\alpha+b\alpha)(ab+a\alpha+b\alpha)}
{16(ab\alpha)^{5/2}},
$$
we obtain the following solutions:
$$
\alpha_1=\frac{ab}{b-a},\quad
\alpha_2=\frac{ab}{a+b},\quad
\alpha_3=-\frac{ab}{a+b}.
$$
Since $\alpha_1\not\in(-b,a)$ and $\alpha_2,\alpha_3\in(-b,a)$, conic
$\mathcal{C}_{\alpha_1}$ is a hyperbola, while $\mathcal{C}_{\alpha_2}$,
$\mathcal{C}_{\alpha_3}$ are ellipses.
\end{example}

%\section{Conclusion}
%\label{sec:conc}
%\input{6-conc/conc}

%\subsection*{Acknowledgement}

%\section{Pseudo-Euclidean discrete Heisenberg system}
%\label{sec:heisenberg}

\begin{bibdiv}
\begin{biblist}
\bibselect{reference}
\end{biblist}
\end{bibdiv}

\end{document}